\documentclass[11pt]{article}
\usepackage{geometry}                
\geometry{letterpaper}                   
\usepackage{float}
\usepackage{graphicx}
\usepackage{amsmath,amssymb,amsthm}

\usepackage{psfrag}
\usepackage{float,subfigure}

\newtheorem{definition}{Definition}
\newtheorem{lemma}{Lemma}

\newtheorem{proposition}{Proposition}
\newtheorem{theorem}{Theorem}

\newtheorem{property}{Property}

\newtheorem{conjecture}{Conjecture}

\newtheorem{remark}{Remark}

\def\cT{{\cal T}}

\def\conj{\hbox{\rm conj}}

\def\R{\mathcal{R}}
\def\Z{\mathcal{Z}}

\newcommand{\abs}[1]{\ensuremath{\vert{#1}\vert}}
\newcommand{\re}{\mathop{\rm Re}\nolimits}
\newfont{\bb}{msbm10}

\def\R{\hbox{\bb R}}

\def\N{\hbox{\bb N}}
\def\C{{\hbox{\bb C}}}

\setlength{\parindent}{6mm}

\title{The Construction of Doubly Periodic Minimal Surfaces via Balance Equations}
\author{Peter Connor and Matthias Weber}
\date{November 24, 2009}                                           

\begin{document}
\maketitle

\noindent {\sc Abstract. } {\footnotesize}
Using Traizet's regeneration method, we prove the existence of many new 3-dimensional families of
embedded, doubly periodic minimal surfaces. All these families have a foliation of $\R^3$ by vertical planes as a limit. In the quotient, these limits can be realized conformally as noded Riemann surfaces, whose components are copies of $\C^*$ with finitely many nodes. We derive the balance equations for the location of the nodes and exhibit solutions that allow for surfaces of arbitrarily large genus and number of ends in the quotient. 

\noindent
{\footnotesize 
2000 \textit{Mathematics Subject Classification}.
Primary 53A10; Secondary 49Q05, 53C42.
}

\noindent
{\footnotesize 
\textit{Key words and phrases}. 
Minimal surface, doubly periodic.
}

\section{Introduction}

A minimal surface $M$ is called {\em doubly periodic} if it is invariant under two linearly independent orientation-preserving translations in euclidean space, which we can assume to be horizontal. The first  such example was discovered by Scherk \cite{sche1}. 

We denote the 2-dimensional lattice generated by the maximal group of such translations by $\Lambda$.
If the quotient $M/\Lambda$ is complete, properly embedded, and of finite topology,  Meeks and Rosenberg \cite{mr3} have shown that  the quotient has a finite number  of annular top and bottom
ends which are asymptotic to flat annuli.

There are two cases to consider: either the top and bottom ends are parallel, or not. By results of Hauswirth and Traizet \cite{hatr1}, a {\em non-degenerate} such surface is a smooth point of a moduli space of dimensions 1 in the non-parallel and 3 in the parallel case.

Moreover, Meeks and Rosenberg \cite{mr3} have shown that in the parallel case, the number of top and bottom ends is equal to the same even number.

Lazard-Holly and Meeks \cite{lm1} have shown that the doubly 
periodic Scherk surfaces are the only embedded doubly periodic surfaces of genus 0.
In particular, the case of parallel ends doesn't occur for this genus.

For genus 1, there is an example of Karcher with orthogonal ends as well as a 3-dimensional family of such surfaces with parallel ends by Karcher \cite{ka4} and Meeks-Rosenberg \cite{mr4}. Moreover, P\'{e}rez, Rodriguez and Traizet  \cite{prt1} have shown that any doubly periodic minimal surface of genus one with parallel ends belongs to this family.

\begin{figure}[H]
	\centerline{ 
		\includegraphics[width=2.2in]{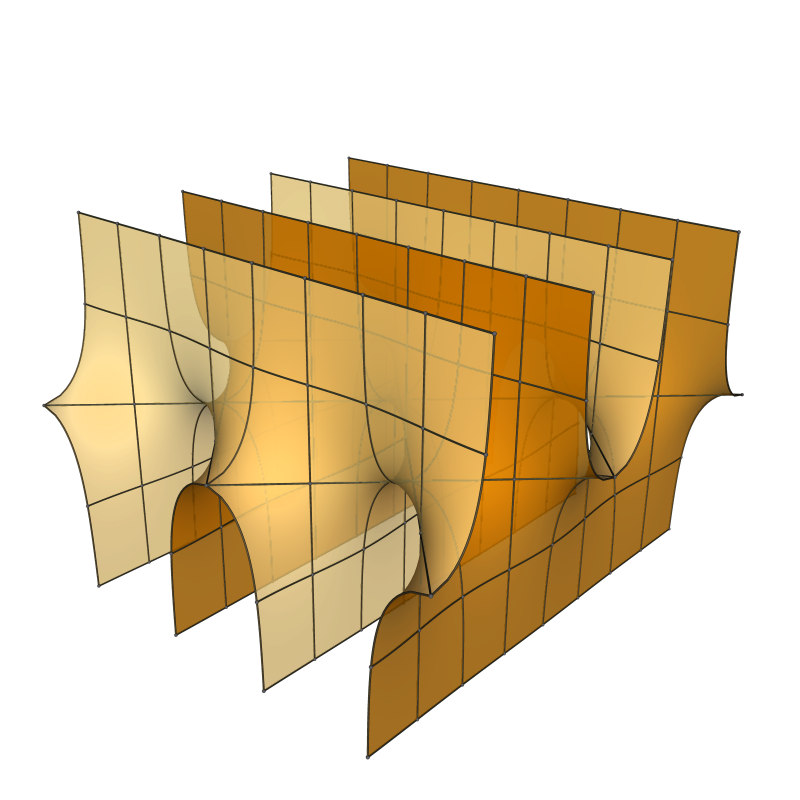}
		\hspace{.5in}
		\includegraphics[width=2.2in]{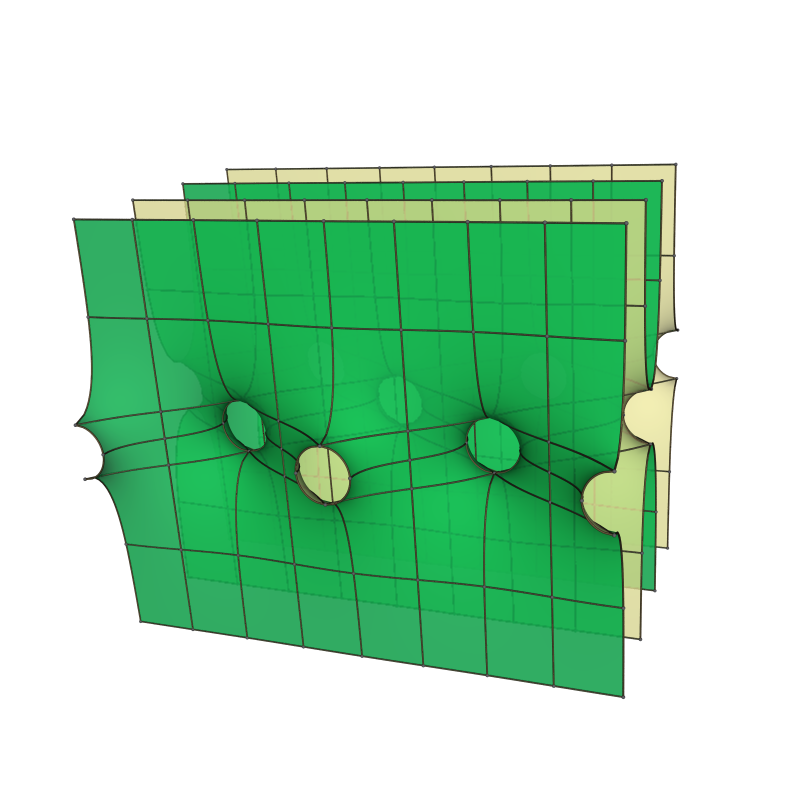}
	}
	\caption{Scherk's surface and a Karcher-Meeks-Rosenberg surface
}
	\label{figure:scherk}
\end{figure}

 Douglas \cite{dou1} and indepently Baginsky and Batista \cite{brb1} have shown that the Karcher example can be deformed to a 1-parameter family by changing the angle between the ends. The family limits in the translation invariant helicoid with handles \cite{howeka4, whw1}
 
For higher genus, only a few examples and families have been known so far:

In the non-parallel case, Weber and Wolf \cite{ww3} have constructed examples of arbitrary genus, generalizing Karcher's example of genus 1. 

Wei found a 1-parameter family of examples of genus 2 with parallel ends \cite{wei2}. This family has been generalized considerably by Rossman, Thayer and Wohlgemuth \cite{rtw1} to allow for more ends.  Rossman, Thayer, and Wohlgemuth did also construct an example with genus 3.

\begin{figure}[H]
	\centerline{ 
		\includegraphics[width=2in]{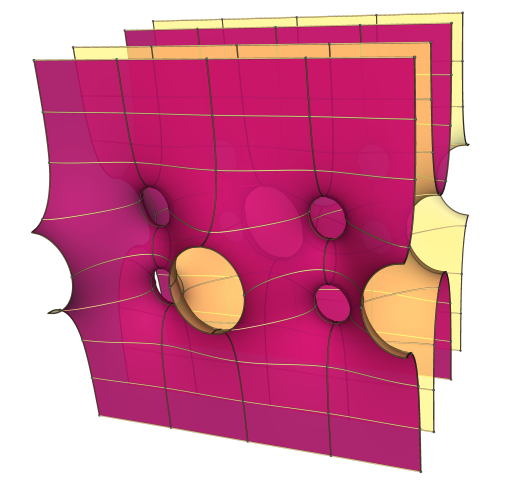}
		\hspace{.5in}
		\includegraphics[width=2in]{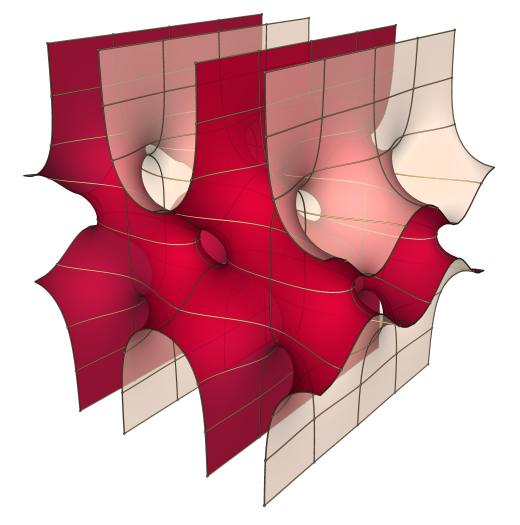}
	}
	\caption{Genus two Wei surface and genus two RTW surface
}
	\label{figure:wei}
\end{figure} 

Our goal is to prove 
\begin{theorem}
\label{thm:main}
For any genus $g\ge1$ and any even number $N\ge 2$, there are
3-dimensional families of complete, embedded, doubly periodic minimal surfaces  in euclidean space
of  genus $g$ and $N$ top and $N$ bottom ends in the quotient.  
\end{theorem}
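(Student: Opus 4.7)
The plan is to follow Traizet's regeneration method: we construct the desired doubly periodic minimal surfaces as perturbations of a singular limit in which the lattice $\Lambda$ is rescaled so that one of its generators becomes very long, and the surface limits to a foliation of $\R^3/\Lambda$ by vertical planes. In the quotient, each plane component is conformally a copy of $\C^*$, and we decorate each component with a finite collection of marked points (nodes) where two adjacent planes will be glued along small catenoidal necks after regeneration. The combinatorics of the pairing of nodes across adjacent planes is exactly what encodes the genus $g$ and the number $N$ of top and bottom ends in the quotient.

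Concretely, I would first identify an underlying noded Riemann surface $\Sigma_0$: a cyclic chain of copies of $\C^*$ (one per horizontal plane in the fundamental domain), with prescribed node pairings between consecutive copies. On $\Sigma_0$ I write down the candidate Weierstrass data (the height differential $dh$ and the Gauss map data) that degenerate correctly at each node, with residues adjusted so that after opening the nodes by a small parameter $t$ one recovers small catenoidal necks. The first-order period conditions for regeneration, read off from the residues of the resulting meromorphic one-forms, translate into a system of \emph{balance equations} for the positions of the nodes on each $\C^*$ component: each node feels a logarithmic force from the other nodes on the same component, and the sum of forces at each configuration must vanish, together with matching conditions relating nodes paired across consecutive components.

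Second, I would exhibit explicit solutions of these balance equations for every admissible genus $g\ge 1$ and even number of ends $N\ge 2$. A natural ansatz is to place nodes at roots of unity (or suitably scaled and rotated versions thereof) on each $\C^*$, exploiting the dihedral symmetry to reduce the balance system to a small number of scalar equations in the radii and relative rotations of the nodal configurations on consecutive components. The symmetry also cuts down the dimension of the unknowns dramatically and makes the solution set transparent. I would choose the combinatorics of node pairings so that the resulting regenerated surface has the announced topology; a standard Euler characteristic count, treating each opened node as a handle and each unmatched node as contributing to an end, gives genus $g$ and $N$ top/$N$ bottom ends.

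Finally, I would invoke an implicit function theorem argument in the spirit of Traizet: nondegeneracy of the balance solutions (the Jacobian of the balance map being invertible transverse to the symmetry directions and to the three-dimensional moduli of the limit configuration) allows the noded surface to be regenerated into a genuine family of embedded doubly periodic minimal surfaces for all sufficiently small $t>0$, and the three free parameters of the limit (two lattice parameters plus the opening parameter) yield the claimed three-dimensional family. I expect the main obstacle to be verifying the nondegeneracy of the balance system: exhibiting symmetric solutions is the conceptual core, but checking that the linearization of the balance map has maximal rank (so that Traizet's regeneration theorem applies and embeddedness is preserved) will require careful, case-by-case analysis of the Hessian of the associated logarithmic potential, and the proof of embeddedness near the singular limit will require a separate argument using the foliated structure of the limit.
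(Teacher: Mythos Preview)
Your high-level outline matches the paper exactly: Traizet regeneration from a noded surface whose components are copies of $\C^*$, balance equations for the node locations, an implicit function theorem applied at a non-degenerate balanced configuration, and a separate embeddedness argument near the limit. Where your proposal diverges---and where the real content of the paper lies---is in the explicit construction of non-degenerate balanced configurations realizing every pair $(g,N)$.

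Your ansatz of placing nodes at roots of unity and exploiting dihedral symmetry is natural but does not appear to work in general. The paper in fact records a \emph{non-example}: for $N=2$, $n_1=n_2=2$, $T=0$ there is no balanced configuration (other than one that collapses to a different combinatorial type). So symmetry alone does not produce solutions, and the balance system is more rigid than your sketch suggests. The paper's actual solutions are of a rather different nature: on one level a single node, on the next level the $n$ roots of the polynomial $p_n(z)=\sum_{k=0}^n\binom{n}{k}^2 z^k$. These roots are not roots of unity; the point is that $p_n$ satisfies a hypergeometric differential equation $z(1-z)p_n''+(1+(2n-1)z)p_n'-n^2 p_n=0$, and plugging the standard identity $p_n''(a_k)=2p_n'(a_k)\sum_{j\neq k}(a_k-a_j)^{-1}$ into this ODE is exactly what makes the balance equations vanish. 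Non-degeneracy is then checked by a diagonal-dominance argument on the Jacobian. To get arbitrary $N$ as well as arbitrary genus, the paper concatenates such a block with a long chain of levels carrying a single node each at alternating positions $\pm 1$, and verifies non-degeneracy of the combined configuration by a block-triangular structure of $DF$.

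One smaller correction: the ends do not come from ``unmatched nodes''. Every node is paired with a node on an adjacent level; the $2N$ ends are the points $0_k$ and $\infty_k$ on each $\C_k^*$, which is why the number of ends is governed by the number $N$ of $\C^*$ components and the genus by $g=1+\sum_k(n_k-1)$.
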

 
Thus all topological types permitted by the results of Meeks and Rosenberg actually occur.

Figure \ref{figure:ex4} shows two translational copies in each direction of an example of genus 7.

\begin{figure}[H]
	\centerline{	\includegraphics[width=2in]{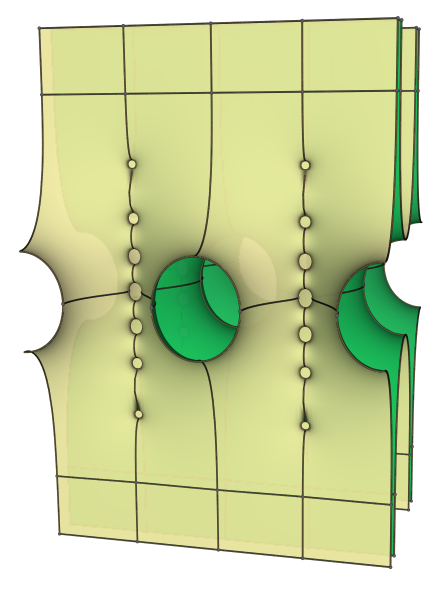}\hspace{1cm}\includegraphics[width=1.8in]{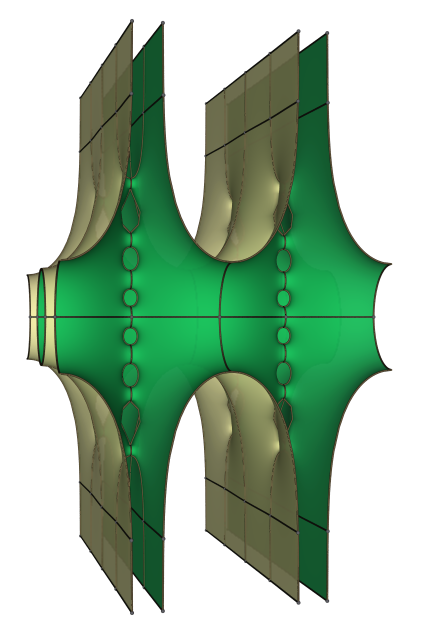}}	
	\caption{Two views of a genus 7 surface}
	\label{figure:ex4}
\end{figure}

The methods used in this paper are an adaptation of Traizet's techniques developed in \cite{tr2}. There, Traizet constructs singly periodic minimal surfaces akin to Rieman's examples which limit in a foliation of euclidean space by horizontal planes. Near the limit, the surfaces look like a collection of parallel planes joined by catenoidal necks. In the limit, these necks develop into nodes so that the quotient surface becomes a noded Riemann surface. The components of the smooth part are punctured spheres, where the punctures have to satisfy Traizet's balance equations. Vice versa, given a finite collection of punctured spheres where the punctures satisfy the balance equations and are non-degenerate in a suitable sense, Traizet constructs a moduli space of Riemann surfaces which forms an open neighborhood of the noded surface. On these Rieman surfaces, he constructs Weierstrass data and solves the period problem using the implicit function theorem.

We will closely follow Traizet's paper, indicating all differences.

The paper is organized as follows: In section 2, we state the results. In section 3, we give examples.  The main theorem is proven in sections 4 through 8.  We prove the embeddedness of our surfaces and show they satisfy certain properties in section 8.


\section{Results}

In this section, we will state precise formulations of our main theorems and introduce the relevant notation.

 \subsection{Description of the surfaces and its properties}
Our goal is to construct three-dimensional families  of embedded doubly periodic minimal surfaces $M$  of arbitrary genus and with an even number $N$ pairs of  annular ends in the quotient.  The surfaces will depend on a small real parameter $t$ (produced by the implicit function theorem) and a complex parameter $T$ explained below.

In contrast to the introduction, we will choose the ends to be horizontal: This allows us to follow the notation and set-up of \cite{tr2} more closely.

Denote the maximal group of orientation preserving translations of $M$ by $\Gamma$. This group will contain a cyclic subgroup of horizontal translations. Denote one of its generators by $\cT$.

By rotating and scaling the surface, 
we can assume that $\cT=(0,2\pi,0)$.  We will identify the horizontal $(x_1,x_2)$-plane with the complex plane $\C$ using $z=x_1+i x_2$. Note that the horizontal planar ends become flat annular ends in the quotient.  Label a non-horizontal generator of $\Lambda$ by $\cT_t$. For $t\to 0$, $\cT_t$ will converge to a horizontal vector $\bar T$, where $T$ is an arbitrary complex parameter. The conjugation is due to orientation issues that will become clear later on. 

Also, order the ends by height and label them $0_k$ and $\infty_k$, with $k\in\Z$.  Most of our work takes place on the quotient surfaces.  
There, the ends will be labeled $0_k$ and $\infty_k$ as well, with $k=1,\ldots,N$ for some even integer $N$.

Our surfaces will have two additional properties.

\begin{property}
The quotient surface $\tilde{M}_t=M_t/\Lambda$ is a union of the following types of domains:  for each pair of ends $E_k=\{0_k,\infty_k\}$, $k=1,\ldots,N$, there is an unbounded domain $E_{k,t}\subset \tilde{M}_t$ containing the ends $0_k$ and $\infty_k$ that is a graph over a domain in $\C^*=\C\setminus\{0\}$ with $n_k+n_{k-1}$ topological disks removed.

$\tilde{M}_t-(  E_k\cup E_{k+1})$ consists of $n_k$ bounded annular components $C_{k,i,t}$ on which the Gauss map is one-to-one, called \emph{catenoidal necks}.  
\label{property1}
\end{property}
\begin{figure}[H]
	\centerline{ 
		\includegraphics[width=2.5in]{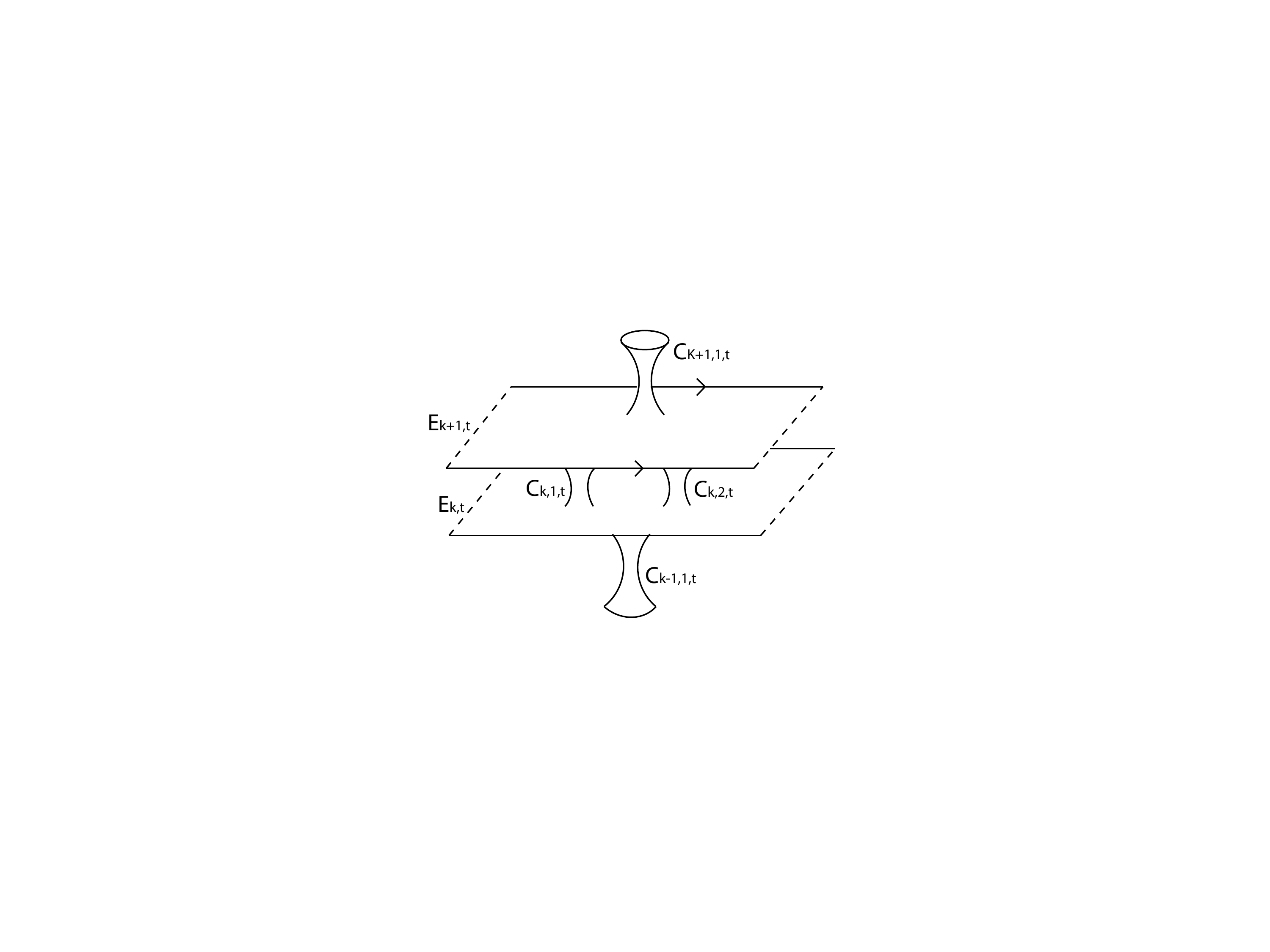}
	}
	\caption{Annular regions and catenoid-shaped necks.
}
	\label{figure:domains}
\end{figure} 
\begin{property}
There is a non-horizontal period $\cT_t$ such that as $t\rightarrow 0$:
\begin{enumerate}
\item
The nonhorizontal period $\cT_t$ converges to a (possibly $0$) horizontal vector $\bar T$.
\item
The surfaces limit in a foliation of $\R^3$ by parallel planes.
\item
The necksize of each annular component $C_{k,i,t}$ shrinks to $0$, and the center of the neck $C_{k,i,t}$ converges to a point $p_{k,i}$.
\item
The underlying Riemann surfaces limit in a noded Riemann surface consisting of $N$ copies of $\C^*=\C\setminus \{0\}$, with nodes at the points $p_{k,i}$.
\end{enumerate}
\label{property2}
\end{property}

Note that when we draw a model of $\tilde{M}_t$, the $E_{k,t}$ components should have the shape of an infinite annulus.  As this is impossible to draw, we model the $E_{k,t}$ components with infinite flat cylinders.

After rotating the KMR and Wei's surfaces so that the ends are horizontal, the behavior of both families near one of their limit fits the description given above.

\subsection{Forces and Balance Equations}

The location of the nodes introduced above is not arbitrary but governed by a system of algebraic equations.

  Consider $N$ copies of $\C^*$, labeled $\C_k^*$ for $k=1,\ldots,N$.    On each $\C_k^*$, place $n_k$ points $p_{k,1},\ldots,p_{k,n_k}$.  Extend this  definition of $p_{k,i}$ for any integer $k$ by making it  periodic with respect to a horizontal vector $T$ in the sense that $p_{k+N,i}=p_{k,i}e^T$ for $k=1,\ldots,N$ and $i=1,\ldots,n_k$, with $n_{k+N}=n_k$.  The difference between our $p_{k,i}$ terms and the ones in \cite{tr2} is that the periodic condition in \cite{tr2} is given by $p_{k+N,i}=p_{k,i}+T$.  The reason for this is that the quotient map for us is given by $\text{exp}:\C\mapsto\C^*$.  Thus, when we look at pictures of our surfaces, the nodes are really located at $\log{p_{k,i}}$ and are subject to the period vector $\log{e^T}=T$.  
  
  This set of points must satisfy a balancing condition given in terms of the following force equations.\\

\begin{definition}
The force exerted on $p_{k,i}$ by the other points in $\{p_{k,i}\}$ is defined by
{\footnotesize \[
F_{k,i}:=\sum_{j \neq i}\frac{p_{k,i}+p_{k,j}}{n_k^2(p_{k,i}-p_{k,j})}+(-1)^k\left(\sum_{j=1}^{n_{k+1}}\frac{p_{k+1,j}^{(-1)^k}}{n_kn_{k+1}\left(p_{k+1,j}^{(-1)^k}-p_{k,i}^{(-1)^k}\right)}-\sum_{j=1}^{n_{k-1}}\frac{p_{k,i}^{(-1)^k}}{n_kn_{k-1}\left(p_{k,i}^{(-1)^k}-p_{k-1,j}^{(-1)^k}\right)}\right).
\]}

\end{definition}

\begin{definition}
The configuration $\{p_{k,i}\}$ is called a \textit{balanced configuration} if $F_{k,i}=0$ for $k=1,\ldots,N$ and $i=1,\ldots,n_k$.
\end{definition}

Note that while the force equations don't seem to contain the parameter $T$, it enters the picture implicitly as the $p_{k,i}$ are assumed to form a $T$-periodic set.

\begin{definition}
Let $m=\sum_{i=1}^Nn_k$ and $F$ and $p$ be the vectors in $\C^m=\R^{2m}$ whose components are made up of the $F_{k,i}$ and $p_{k,i}$ respectively.  The balanced configuration $\{p_{k,i}\}$ is said to be {\it non-degenerate} if the differential of the map $p\mapsto F$ has rank $2(m-1)$.
\end{definition}

The differential of the map $p\mapsto F$ can't have full rank $2m$ because $$\sum_{k=1}^N\sum_{i=1}^{n_k}F_{k,i}=0.$$  This holds whether or not the configuration $\{p_{k,i}\}$ is balanced.

Observe also that whenever we have a solution $p$ for the balance equations, $\lambda p$ will also be a solution for any $\lambda\in\C^*$.

Now, we can state our main result.
\begin{theorem}
If $\{p_{k,i}\}$ is a non-degenerate balanced configuration then there exists a corresponding three-dimensional family of embedded doubly periodic minimal surfaces with genus
$$g=1+\sum_{k=1}^N(n_k-1),$$
$2N$ horizontal ends and properties \ref{property1} and \ref{property2}.
\label{main theorem}
\end{theorem}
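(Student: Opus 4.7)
The strategy is to follow Traizet's regeneration method: realize the noded configuration $\{p_{k,i}\}$ as the $t=0$ limit of a smooth family of Riemann surfaces $M_t$, construct candidate Weierstrass data $(G,dh)$ on each $M_t$, and then solve the resulting period problem by the implicit function theorem, with the balance equations $F_{k,i}=0$ appearing as the leading-order vanishing conditions that make the linearization surjective. Concretely, start from $\tilde M_0=\bigsqcup_k \C^*_k$ with the identifications $p_{k,i}\sim p_{k+1,j}$ prescribed by Property \ref{property2}. Open each node in the standard way: near $p_{k,i}$ introduce local coordinates $(u,v)$ with $uv=t\,\tau_{k,i}$ for complex neck weights $\tau_{k,i}$, producing a family of smooth Riemann surfaces of the claimed genus $g=1+\sum(n_k-1)$ by the node-opening formula. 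The moduli parameters are $t$, the weights $\tau_{k,i}$, the node positions $p_{k,i}$, and $T$.

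On each $M_t$ the height differential $dh$ is the unique meromorphic $1$-form with double poles at the ends $0_k,\infty_k$, prescribed residues, and zero periods on a chosen dual basis rel ends; at $t=0$ it restricts on $\C^*_k$ to a multiple of $dz/z$. The Gauss map $G$ is built so that adjacent sheets carry Gauss maps which \emph{invert}: on $\C^*_k$ one requires $G\sim c_k z^{(-1)^k}$ to leading order, reflecting the orientation flip across a catenoidal neck. This inversion is responsible for the $(-1)^k$ exponents appearing in the definition of $F_{k,i}$, and the coefficients $c_k$ are determined by matching at the opened nodes.

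The period problem decomposes into three parts: (a) the two horizontal lattice periods must realize $(0,2\pi,0)$ and $\bar T$ (plus a vertical correction forced to vanish in the limit); (b) the real periods around each neck must close; (c) the vertical periods along the remaining independent cycles must vanish. At $t=0$ the surface is planar and almost every condition is trivial. The key computation is to expand the vertical period functionals as a series in $t$ using residue calculus at the opened nodes; their leading-order dependence on the node positions $p_{k,i}$ reproduces exactly the combination written as $F_{k,i}$, including the denominators $n_k^2$ and $n_k n_{k\pm 1}$ which arise from the normalization of $dh$ on each $\C^*_k$. Hence the zeroth-order period conditions at $t=0$ are precisely the balance equations, and the linearization in $(\tau_{k,i},p_{k,i})$ is controlled by the differential of the map $p\mapsto F$.

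Non-degeneracy of the balanced configuration, together with the two real dimensions of kernel supplied by the $\C^*$-scaling symmetry $p\mapsto\lambda p$, then gives exactly the surjectivity required by the implicit function theorem. For every sufficiently small real $t>0$ and every $T$ near the prescribed value, this produces node positions and neck weights making all periods close, yielding the three-parameter family (the real $t$ and the complex $T$) of doubly periodic minimal immersions of genus $g$ with $2N$ horizontal ends. Embeddedness (Property \ref{property1}) follows because for small $t$ the surface decomposes into graphical annular regions $E_{k,t}$ joined by $O(t)$-sized catenoidal necks $C_{k,i,t}$, and graphicality plus the vertical separation between sheets gives embeddedness by continuity from the planar limit, following Section 6 of \cite{tr2}. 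The main obstacle will be the third step above: carefully expanding the period map to leading order in the opening parameters and verifying that the output is the specific force expression of the Definition, since this residue bookkeeping under the alternating Gauss map convention is precisely where the present argument differs from \cite{tr2}.
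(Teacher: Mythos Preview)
Your overall strategy---Traizet regeneration, opening nodes, period map, implicit function theorem with the balance equations as the leading-order condition---is the right one and matches the paper's approach. However, several of the concrete ingredients you describe are incorrect in ways that would derail the computation.

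First, your description of the height differential is wrong. You say $dh$ has double poles at the ends $0_k,\infty_k$ and restricts on $\C^*_k$ to a multiple of $dz/z$; these two claims are already mutually inconsistent, and neither is correct here. The ends are horizontal annular (Scherk-type) ends, so the height is bounded near them and $\eta=dh$ is \emph{holomorphic} at $0_k$ and $\infty_k$. At $r=0$ the restriction $\eta_k$ is not $dz/z$ but rather the form with simple poles at the node preimages $a_{k,i},b_{k,i}$ and residues $\gamma_{k,i},-\gamma_{k-1,i}$; the normalizations $\sum_i\gamma_{k,i}=1$ are what make it holomorphic at $0$ and $\infty$. Getting this wrong means you will not see the correct residue structure feeding into the force equations.

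Second, and more seriously, you attribute the force equations to the \emph{vertical} period functionals. In the paper they arise instead from the \emph{horizontal} periods on the $A$-cycles around the necks: one expands $\overline{\int_{A_{k,i}}G^{-1}\eta}-\int_{A_{k,i}}G\eta$ (scaled by $1/\sqrt{r}$), computes the residues of $G_k\eta_k$ at $a_{k,i}$ and of $G_{k+1}\eta_{k+1}$ at $b_{k+1,i}$, and after the substitution $a_{k,i}=(\conj^k p_{k,i})^{(-1)^k}$ etc.\ obtains exactly $-4\pi i(-1)^k F_{k,i}$. The vertical $A$-periods are handled trivially by taking $\gamma_{k,i}\in\R$, and the vertical $B$-periods give the much simpler condition $\gamma_{k,i}=\gamma_{k,1}$. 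If you attempt the expansion you propose you will not recover $F_{k,i}$.

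Third, you omit the one genuinely new constraint relative to \cite{tr2}: because the ends here are annular rather than planar, one must also close the horizontal periods around $\partial D_\epsilon(0_k)$. This produces an extra block $\mathcal{F}_5$ in the period map whose solution fixes the auxiliary parameters $\delta_k$; without it the implicit-function-theorem matrix is not square. This, together with the multiplicative (rather than additive) identification $p_{k+N,i}=p_{k,i}e^T$ on $\C^*$, is precisely where the argument departs from Traizet's and where your ``main obstacle'' actually lives.
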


Our Main Theorem \ref{thm:main} will follow from this theorem and the non-degeneracy of the balance configurations of Proposition \ref{prop:crucial}.

\section{Examples}
In this section, we will discuss examples of non-degenerate balanced configurations.

\subsection{Adding handles to Wei's genus two examples}

In all known instances of Traizet's regeneration technique, the simplest non-trivial configurations are given as the roots of
special polynomials that satisfy a hypergeometric differential equation. So far, there is no explanation of this phenomenon, neither a general understanding of the more complicated solutions of the balance equations. In the case at hand, we have the following:

\begin{proposition}
Let $n \in \N$ and $a_1,a_2, \cdots, a_n$ be the roots of the polynomial 
\[
p_n(z)=\sum_{k=0}^n{n \choose k}^2z^k. 
\]
The following configuration is balanced and non-degenerate: $N=2$, $n_1=1$, $n_2=n$, $p_{1,1}=1$, $p_{2,i}=a_i$ for $i=1,\cdots,n$, and $T=0$.
\label{handles}
\end{proposition}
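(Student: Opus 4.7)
The plan is to recognize $p_n(z)=\sum_{k=0}^n\binom{n}{k}^2 z^k$ as the hypergeometric polynomial ${}_2F_1(-n,-n;1;z)$ and reduce both the balancing condition and its non-degeneracy to the hypergeometric ODE it satisfies.

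With $T=0$ and $n_1=1$, the periodicity gives $p_{0,j}=p_{2,j}$ and $p_{3,1}=p_{1,1}=1$, so the forces simplify to
$$F_{1,1}=-\frac{1}{n}\sum_{j=1}^{n}\frac{1+a_j}{1-a_j},\qquad F_{2,i}=\frac{1}{n^2}\sum_{j\ne i}\frac{a_i+a_j}{a_i-a_j}+\frac{1}{n}\cdot\frac{1+a_i}{1-a_i}.$$
Using the standard logarithmic-derivative identities $\sum_j \tfrac{1}{1-a_j}=p_n'(1)/p_n(1)$ and $\sum_{j\ne i}\tfrac{1}{a_i-a_j}=p_n''(a_i)/(2p_n'(a_i))$, the conditions $F_{1,1}=0$ and $F_{2,i}=0$ become, respectively, $p_n'(1)/p_n(1)=n/2$ and $a_i(a_i-1)p_n''(a_i)=((2n-1)a_i+1)p_n'(a_i)$. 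Vandermonde's identity gives $p_n(1)=\binom{2n}{n}$ and $p_n'(1)=\sum_k k\binom{n}{k}^2=\tfrac{n}{2}\binom{2n}{n}$, verifying the first. The second is exactly the hypergeometric ODE $z(z-1)p_n''-((2n-1)z+1)p_n'+n^2 p_n=0$ evaluated at a root of $p_n$, so it holds as well, and the configuration is balanced.

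For non-degeneracy I would let $c:=p_{1,1}$ vary and parametrize nearby configurations by the pair $(c,P)$ with $P$ monic of degree $n$, giving $n+1$ complex parameters that match the perturbations $(w_0,w_1,\ldots,w_n)$. Running the derivation above with arbitrary $c$ shows that the balance equations are equivalent to the ODE $\mathcal{L}_c P:=z(z-c)P''-((2n-1)z+c)P'+n^2 P=0$ together with $2cP'(c)=nP(c)$. This system is invariant under the $\C^*$-scaling $(c,P(z))\mapsto(\lambda c,\lambda^n P(z/\lambda))$. Linearizing at $(c,P)=(1,p_n)$, a kernel element consists of a pair $(\delta c,\delta P)$ with $\delta P$ of degree $\le n-1$ satisfying $\mathcal{L}_1\delta P=\delta c\,(zp_n''+p_n')$ together with the linearized $F_{1,1}$-constraint at $z=1$. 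A Wronskian computation shows polynomial solutions of $\mathcal{L}_1 y=0$ form a one-dimensional space spanned by $p_n$; combined with a leading-coefficient count this makes $\mathcal{L}_1$ a bijection from polynomials of degree $\le n-1$ onto themselves, so $\delta P$ is uniquely determined by $\delta c$. Differentiating the identity $\mathcal{L}_{c_\lambda}P_\lambda=0$ along the scaling orbit at $\lambda=1$ yields $\mathcal{L}_1(np_n-zp_n')=zp_n''+p_n'$, so this unique solution is $\delta P=\delta c\,(np_n-zp_n')$, which is exactly the tangent to the scaling orbit; the linearized $F_{1,1}$-constraint is automatic along that orbit, so the kernel is one-dimensional.

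The main obstacle is the initial translation of the discrete force system into the clean polynomial identity $\mathcal{L}_c P=0$: one must check that the degree of $\mathcal{L}_c P$ drops from $n$ to $n-1$ under monic normalization (so that vanishing at the $n$ roots of $P$ forces the entire ODE), and then recognize the resulting operator as the hypergeometric operator annihilating ${}_2F_1(-n,-n;1;z)$. Once this bridge to the hypergeometric world is built, the scaling invariance of the force system does most of the remaining work for non-degeneracy, closely following Traizet's treatment of Legendre-type configurations in \cite{tr2}.
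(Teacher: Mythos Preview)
Your argument is correct, and for the balance equations it is close to the paper's: both reduce $F_{2,i}=0$ to the hypergeometric ODE evaluated at a root of $p_n$. The one difference is $F_{1,1}=0$: you compute $p_n'(1)/p_n(1)=n/2$ via Vandermonde, whereas the paper uses the palindromic symmetry $p_n(z)=z^np_n(1/z)$ to pair each root $a$ with $1/a$ and cancel $\frac{1+a}{1-a}+\frac{1+a^{-1}}{1-a^{-1}}=0$ directly.

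The non-degeneracy arguments are genuinely different. The paper never passes to the polynomial $P$ or the operator $\mathcal{L}_c$; it simply shows that the $n\times n$ block $M=(\partial F_{2,i}/\partial p_{2,j})$ is strictly column diagonally dominant. Since all roots $a_j$ are negative real, one gets $\sum_{i\ne j}|M_{i,j}|=M_{j,j}-\tfrac{2}{(1-a_j)^2}<M_{j,j}$, so $M$ is invertible and $DF$ has rank $n$. Your route---rewriting the $F_{2,i}=0$ as the single identity $\mathcal{L}_cP\equiv 0$, noting that $\mathcal{L}_1$ acts bijectively on polynomials of degree $\le n-1$ because its leading coefficient on $z^k$ is $(k-n)^2\ne 0$, and then identifying the kernel of the linearization with the tangent to the $\C^*$-scaling orbit---is more structural and does not use the (true but unproven in that proof) fact that the $a_j$ are real. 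The paper's approach, on the other hand, is shorter and isolates the invertible block $M$ explicitly, which it then reuses verbatim in Propositions~\ref{8ends} and~\ref{prop:crucial}.
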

\begin{figure}[H]
	\centerline{ 
		\includegraphics[width=3in]{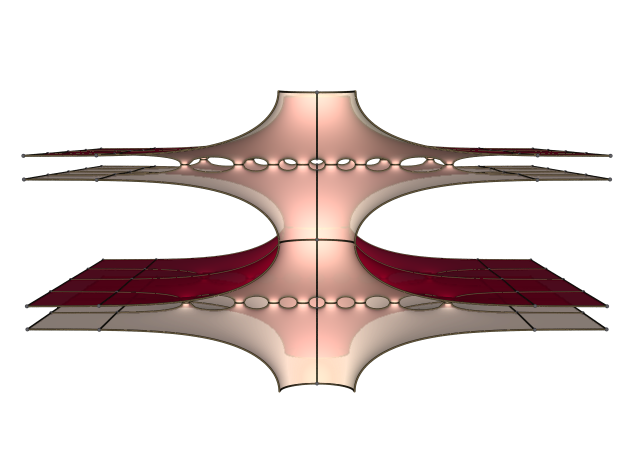}
		\includegraphics[width=3in]{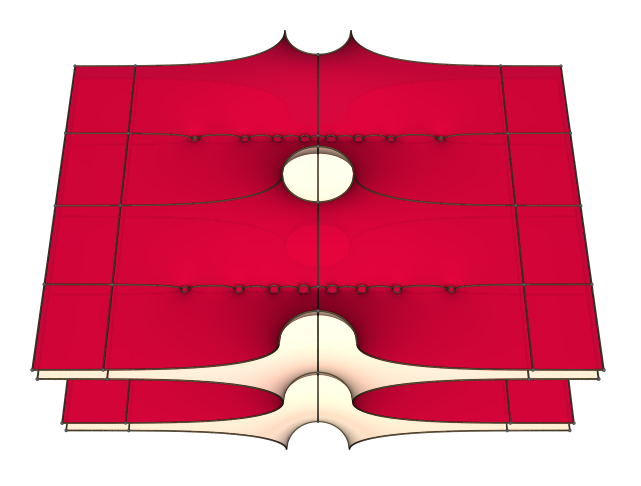}
	}
	\caption{Genus 8 surface.   The locations of the six small necks correspond to the roots of the polynomial $p_8(z)=z^8+64z^7+784z^6+3136z^5+4900z^4+3136z^3+784z^2+64z+1$.}
	\label{figure:Wei(1,8)}
\end{figure}  

\begin{proof}
In this case, the balance equations are given by the following equations.
\[
\begin{split}
F_{1,1} &=\sum_{j=1}^n\frac{1+a_j}{n(1-a_j)} \\
F_{2,i} &=\sum_{j \neq i}\frac{a_i+a_j}{n^2(a_i-a_j)}+\frac{1+a_i}{n(1-a_i)} \\
\end{split}
\]
Observe first that the polynomials $p_n$ satisfy the 
hypergeometric differential equation
\[
z(1-z)p_n''(z)+(1+(2n-1)z)p_n'(z)-n^2p_n(z)=0.
\]
In particular, all roots are simple. Furthermore,
\[
p_n(z)=z^n p_n(1/z)
\]Thus, for $n=2k$, the roots will be $a_1,\cdots,a_k,1/a_1,\cdots,1/a_k$ and for $n=2k+1$, the roots will be $a_1,\cdots,a_k,1/a_1,\cdots,1/a_k,-1$.  Hence, $F_{1,1}=0$ by symmetry.

Since $p_n$ only has simple zeroes, for each zero $a_k$ we get the following equation.
\[
p_n''(a_k)=2p_n'(a_k)\sum_{j \neq k}\frac{1}{a_k-a_j}
\]
Plugging this into the hypergeometric differential equation for $p_n$, we get that
\[
0=2a_k(1-a_k)\sum_{j \neq k}\frac{1}{a_k-a_j}+1+(2n-1)a_k.
\]
This implies easily that  $F_{2,k}=0$ for $1 \leq k \leq n$, and so the given configuration is balanced $\forall n \in \N$.

To show that the configuration is non-degenerate, let M be the matrix with entries 
\[
M_{i,j}=\frac{\partial F_{2,i}}{\partial p_{2,j}}.  
\]
Then
\[
M_{i,i}=\sum_{k \neq i}\frac{-2a_k}{n(a_i-a_k)^2}+\frac{2}{(1-a_i)^2} 
\]
and, if $i \neq j$,
\[
M_{i,j}=\frac{2a_i}{n(a_i-a_j)^2}.
\]
Thus, 
\[
\begin{split}
\sum_{i \neq j}|M_{i,j}| &=\sum_{i \neq j}\frac{-2a_i}{n(a_i-a_j)^2} \\
&=\sum_{i \neq j}\frac{-2a_i}{n(a_j-a_i)^2} \\
&=M_{j,j}-\frac{2}{(1-a_j)^2} \\
&\leq M_{j,j} \\
\end{split}
\]
for $j=1,\cdots,n$.  Hence, M is invertible and the differential of F has rank n.  Thus, this configuration is non-degenerate.
\end{proof}

\subsection{Combining non-degenerate balanced configurations}
The next proposition requires two new definitions.  They are adjustments on similar terms from \cite{tr2}.  Let $F_{k,i}^+$ be the sum of the forces exerted by the $p_{k+1,j}$ terms on $p_{k,i}$ and $F_{k,i}^-$ be the sum of the forces exterted by the $p_{k-1,j}$ terms on $p_{k,i}$, i.e.
\[
F_{k,i}^+=(-1)^k\sum_{j=1}^{n_{k+1}}\frac{p_{k+1,j}^{(-1)^k}}{n_kn_{k+1}\left(p_{k+1,j}^{(-1)^k}-p_{k,i}^{(-1)^k}\right)}
\]
and
\[
F_{k,i}^-=(-1)^{k+1}\sum_{j=1}^{n_{k-1}}\frac{p_{k,i}^{(-1)^k}}{n_kn_{k-1}\left(p_{k,i}^{(-1)^k}-p_{k-1,j}^{(-1)^k}\right)}\quad.
\]
\begin{proposition}
Let $p_{k,i}$ and $p_{k,i}'$ be two balanced configurations.  Assume that:
\begin{enumerate}
\item $n_1=n_1'=1$, 
\item $p_{1,1}=p_{1,1}'=1$, 
\item $F_{1,1}^+=F_{1,1}'^+\neq 0$.  
\end{enumerate}
Define $p_{k,i}''$ as follows:
\[
\begin{split}
& \forall k \in\{1,\cdots,N\},\,n_k''=n_k \text{ and } p_{k,i}''=p_{k,i} \\
& \forall k \in\{1,\cdots,N'\},\,n_{k+N}''=n_k' \text{ and } p_{k+N,i}''=p_{k,i}'e^T \\
& \forall k \in \mathbb{Z},\,p_{k+N+N',i}''=p_{k,i}''e^{T+T'} \\
\end{split}
\]
The configuration $p_{k,i}''$ is periodic with $N''=N+N'$ and $T''=T+T'$. Then the configuration $p_{k,i}''$ is balanced.
\label{combining}
\end{proposition}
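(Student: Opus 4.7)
The strategy is to verify that every $F''_{k,i}$ of the combined configuration vanishes by splitting the layers into the interiors of the two blocks and the four junction layers $k\in\{1,N,N+1,N+N'\}$, and to use scale invariance of the force together with hypotheses (1)--(3) at the junctions.

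The key preparatory observation is that the force is invariant under the rescaling $p_{k,i}\mapsto\lambda p_{k,i}$: the factor $\lambda$ (or $\lambda^{\pm1}$) cancels between numerators and denominators of every term in the force formula. Consequently the rescaled second configuration $\{p'_{k,i}e^T\}$ is still balanced and has the same forces as $\{p'_{k,i}\}$; its layers identify cleanly with the layers $N+1,\dots,N+N'$ of the combined configuration (the parity factors $(-1)^k$ in the cross-layer terms transfer correctly because $N$ is even in the relevant setting of the paper). For an interior layer of the first block ($2\le k\le N-1$), the three adjacent layers $k-1,k,k+1$ of the combined configuration coincide literally with those of the first configuration, so $F''_{k,i}=F_{k,i}=0$. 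The same assertion at interior layers of the second block ($N+2\le k\le N+N'-1$) follows from the rescaled balance.

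For the junctions, by the symmetric roles of the two blocks it suffices to treat $k=N$ and $k=N+1$. At $k=N$, hypotheses (1) and (2) make the upward neighbor $p''_{N+1,1}=p'_{1,1}e^T=e^T$ agree with the upward neighbor $p_{N+1,1}=p_{1,1}e^T=e^T$ coming from the periodic extension of the first configuration, and the normalizations match ($n''_{N+1}=1=n_{N+1}$); consequently $F''_{N,i}=F_{N,i}=0$. At $k=N+1$, writing $F''_{N+1,1}=A+B$ with $A$ the upward and $B$ the downward contribution, scale invariance gives $A=(F')^{+}_{1,1}$, while a short direct computation using $p''_{N,j}=p_{N,j}$ and $p''_{N+1,1}=e^T$ identifies $B$ with the downward force $F^{-}_{1,1}$ of the first configuration at layer $1$. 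Hypothesis (3), $F^{+}_{1,1}=(F')^{+}_{1,1}$, together with the balance identity $F^{+}_{1,1}+F^{-}_{1,1}=0$, then gives $F''_{N+1,1}=(F')^{+}_{1,1}+F^{-}_{1,1}=F^{+}_{1,1}+F^{-}_{1,1}=0$; a symmetric argument at the other junction (using balance of the second configuration combined with hypothesis (3)) handles $k=1$ and $k=N+N'$. The main delicate step is the bookkeeping at these junctions---matching the scaling factor $e^T$ and the parity signs $(-1)^k$---and recognizing hypothesis (3) as precisely the input that forces the two interface contributions to cancel; the non-vanishing assumption on $F^{+}_{1,1}$ plays no role in establishing balance and is presumably reserved for subsequent non-degeneracy arguments.
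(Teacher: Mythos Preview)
Your proof plan is correct and is precisely the direct verification that the paper defers to (the paper's own proof consists entirely of the sentence ``The proof of this proposition is exactly the same as the proof of part one of proposition~3 in \cite{tr2}''). Your decomposition into interior layers and junction layers, together with the scale invariance of the forces and the identification $F''_{N+1,1}=F'^{+}_{1,1}+F^{-}_{1,1}$ at the interface, is the intended argument; your observation that the hypothesis $F^{+}_{1,1}\neq 0$ is not used for balance (only for later non-degeneracy considerations) is also correct.
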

\begin{proof}
The proof of this proposition is exactly the same as the proof of part one of proposition $3$ in \cite{tr2}.  
\end{proof}

\begin{remark}
Assuming that $p_{k,i}$ and $p_{k,i}'$ are non-degenerate balanced configurations satisfying the hypotheses of proposition \ref{combining} then, we would like to prove that $p_{k,i}''$ is also non-degenerate.  Combining this with propositions \ref{handles} and \ref{combining} would then show the existence of surfaces with an arbitrary number of ends that satisfy properties \ref{property1} and \ref{property2}.  This is quite technical, however, and we omit the proof. We will treat a special case in Proposition \ref{prop:crucial}  that allows us
to establish the existence of surfaces with arbitrarily many ends and arbitrary genus. 
\end{remark}

\begin{proposition}
Let $N=2,n_1=1,n_2=n,T=0, N'=2, n_1'=1, n_2'=m$, and $T'=0$.  Also, let $a_1,\ldots,a_n$ be the roots of the polynomial $p_n(z)=\sum_{k=0}^n{n \choose k}^2z^k$ and $b_1,\ldots,b_m$ be the roots of the polynomial $p_m(z)=\sum_{k=0}^m{m \choose k}^2z^k.$  Then there exists a non-degenerate balanced configuration $\{p_{k,i}''\}$ with $p_{1,1}''=1$, $p_{2,i}''=a_i$ for $i=1,\ldots,n$, $p_{3,1}''=1$, and $p_{4,i}''=b_i$ for $i=1,\ldots,m$.
\label{8ends}
\end{proposition}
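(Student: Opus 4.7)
My plan is to obtain balance as a direct corollary of Proposition \ref{combining} and then to prove non-degeneracy via a block reduction of the Jacobian that exploits Proposition \ref{handles}. To apply Proposition \ref{combining} to the two configurations $(N=2, n_1=1, n_2=n, p_{1,1}=1, p_{2,i}=a_i)$ and $(N'=2, n_1'=1, n_2'=m, p_{1,1}'=1, p_{2,i}'=b_i)$ from Proposition \ref{handles}, the only hypothesis requiring attention is $F_{1,1}^+ = F_{1,1}'^+ \neq 0$. A direct calculation reduces $F_{1,1}^+$ to $-\frac{1}{n}\sum_j(1-a_j)^{-1} = -\frac{1}{n}\,p_n'(1)/p_n(1)$, and using $p_n(1) = \binom{2n}{n}$ together with $p_n'(1) = \frac{n}{2}p_n(1)$ (obtained by evaluating the hypergeometric differential equation of Proposition \ref{handles} at $z=1$) gives $F_{1,1}^+ = -\frac{1}{2}$, independent of $n$. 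The same holds for the $b$-configuration, so Proposition \ref{combining} applies and yields balance of $\{p_{k,i}''\}$.

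For non-degeneracy, I organize the complex Jacobian by the four levels: variables $(z_1, \alpha_i, z_3, \beta_i)$ and force equations $(F_{1,1}, F_{2,i}, F_{3,1}, F_{4,i})$. Because forces couple only neighboring levels, many cross-blocks vanish at the balanced point: $\partial F_{1,1}/\partial z_3 = \partial F_{2,i}/\partial \beta_j = \partial F_{3,1}/\partial z_1 = \partial F_{4,i}/\partial \alpha_j = 0$. The diagonal blocks $M^A = [\partial F_{2,i}/\partial \alpha_j]$ and $M^B = [\partial F_{4,i}/\partial \beta_j]$ coincide entry-wise with the matrices shown to be diagonally dominant (and hence invertible) in the proof of Proposition \ref{handles}. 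This lets me perform a Schur-complement reduction: the $F_{2,i}$ and $F_{4,i}$ equations uniquely determine $\mathbf{a}$ and $\mathbf{b}$ as linear functions of the two scalars $(u,v) = (u_{1,1}, u_{3,1})$, and substituting into the $F_{1,1}$ and $F_{3,1}$ equations leaves a $2\times 2$ system in $(u,v)$. The global scaling tangent $(1,a_i,1,b_i)$ must lie in the kernel of the full Jacobian, which forces this $2\times 2$ matrix to annihilate $(1,1)$ and hence take the form $\bigl(\begin{smallmatrix}\alpha_1 & -\alpha_1 \\ \beta_1 & -\beta_1\end{smallmatrix}\bigr)$. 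Non-degeneracy then reduces to the single scalar claim $\alpha_1 \neq 0$.

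The key identification, and the main obstacle, is solving $M^A \mathbf{a} = -\mathbf{w}_{z_1}$ in closed form, where $\mathbf{w}_{z_1,i} = \partial F_{2,i}/\partial z_1$. The trick is that the sub-configuration scaling kernel from Proposition \ref{handles} gives $M^A(a_i)_i = (2a_i/(n(1-a_i)^2))_i$, while $\mathbf{w}_{z_1} = -\frac{1}{2}M^A(a_i)$, so $\mathbf{a} = (a_i/2)$ (and analogously $\mathbf{b} = (b_i/2)$). With this adjustment, $\alpha_1$ can be computed by evaluating the sums $\sum_j a_j/(1-a_j)^2$ and $\sum_j b_j/(1-b_j)^2$, which I obtain from $p_n(1), p_n'(1), p_n''(1)$ (and the $m$-analogues) by combining the hypergeometric ODE with its first derivative at $z=1$. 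This yields
\[
\alpha_1 = -\frac{1}{8}\!\left(\frac{n}{2n-1}+\frac{m}{2m-1}\right),
\]
which is strictly negative for all $n, m \geq 1$, completing the proof. Without the sub-config scaling identification, one would face the intractable task of inverting $M^A$ symbolically; recognizing $\mathbf{a} = (a_i/2)$ as a "half-scaling" tangent is the conceptual content of the argument.
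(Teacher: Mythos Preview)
Your argument is correct and follows the same skeleton as the paper's: balance via Proposition~\ref{combining} after checking $F_{1,1}^+=F_{1,1}'^+=-\tfrac12$, and non-degeneracy via the invertibility of the two diagonal blocks $M^A=\bigl[\partial F_{2,i}/\partial p_{2,j}\bigr]$ and $M^B=\bigl[\partial F_{4,i}/\partial p_{4,j}\bigr]$ inherited from Proposition~\ref{handles}. Where you depart from the paper is in how much of the rank argument you actually carry out. The paper simply displays the block form
\[
DF''=\begin{bmatrix}\alpha & \beta & 0 & \gamma \\ \cdot & N & \cdot & 0 \\ 0 & \beta & \alpha & \gamma \\ \cdot & 0 & \cdot & M\end{bmatrix}
\]
and asserts $\operatorname{rank}(DF'')\ge n+m+1$; you instead take the Schur complement with respect to the invertible $\operatorname{diag}(N,M)$ block, reduce to the $2\times2$ system in $(u,v)$, and compute its upper-left entry explicitly as $\alpha_1=-\tfrac18\bigl(\tfrac{n}{2n-1}+\tfrac{m}{2m-1}\bigr)\ne0$. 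This is the more honest route: the block form alone gives only $\operatorname{rank}\ge n+m$, and the extra $+1$ genuinely requires something like your Schur computation (equivalently, the nonvanishing of the paper's scalar $\alpha=\partial F_{1,1}''/\partial p_{1,1}''$, which the paper writes down but never checks). Your identification $(M^A)^{-1}\mathbf w_{z_1}=-\tfrac12(a_i)$ via the sub-configuration scaling kernel is exactly the right device to avoid inverting $M^A$ symbolically; note for completeness that $\partial F_{2,i}/\partial z_3=\partial F_{2,i}/\partial z_1$ at the balanced point, so the $v$-coefficient of $\mathbf a$ is also $\tfrac12(a_i)$, which is what makes $S_{11}+S_{12}=0$ and hence $S_{11}=\alpha/2=\alpha_1$ come out consistently with your final formula.
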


\begin{figure}[H]
	\centerline{ 
		\includegraphics[width=2in]{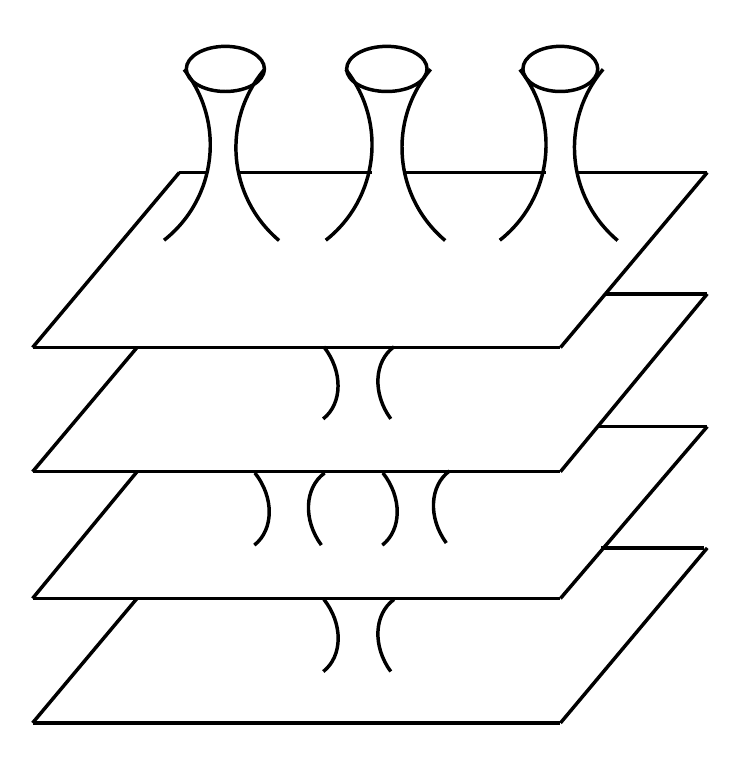}
	}
	\caption{Surface corresponding to non-degenerate balanced configuration with $n=2$, and $m=3$.
}
	\label{(1,2,1,3)setup}
\end{figure}  

\begin{proof}
Let $p_{1,1}=p_{1,1}'',p_{2,i}=p_{2,i}'',p_{1,1}'=p_{3,1}''$, and $p_{2,i}'=p_{4,i}''$.  Then $n_1=n_1'=1$ and $p_{1,1}=p_{1,1}'=1$.  Also,
\[
F_{1,1}^+=-\sum_{j=1}^n\frac{1}{n(1-a_j)}
\]
and
\[
F_{1,1}'^+=-\sum_{j=1}^n\frac{1}{n(1-b_j)}.
\]
If $n$ is even then order the roots of $p_n$ such that $a_{n/2+k}=1/a_k$ for $k=1,\ldots,n/2$.  Then, after a brief computation,
\[
F_{1,1}^+=-\frac{1}{2}.
\]


If $n$ is odd then order the roots of $p_n$ such that $a_{(n-1)/2+k}=1/a_k$ for $k=1,\ldots,(n-1)/2$ and $a_n=-1$.  Then
\[
F_{1,1}^+=-\frac{1}{2}
\]
Thus, $F_{1,1}^+=-\frac{1}{2}$ and, similarly, $F_{1,1}'^+=-\frac{1}{2}.$  Hence, the hypotheses of proposition \ref{combining} are met.  Therefore, $\{p_{k,i}''\}$ is a balanced configuration.  Since we didn't really prove the non-degeneracy portion of proposition \ref{combining}, we can prove that directly for this balanced configuration.    

Let $N$ be the matrix with entries $$N_{i,j}=\frac{\partial F_{2,i}''}{\partial p_{2,j}''}$$ and $M$ be the matrix with entries $$M_{i,j}=\frac{\partial F_{4,i}''}{\partial p_{4,j}''}.$$  As shown in proposition \ref{handles}, $M$ and $N$ are invertible.  Also, let
\[
\alpha=\sum_{j=1}^m\frac{p_{4,j}}{m(p_{4,j}-1)^2}+\sum_{j=1}^n\frac{p_{2,j}}{n(p_{2,j}-1)^2},
\]
\[
\beta=\left(\frac{-1}{n(p_{2,1}-1)^2},\ldots,\frac{-1}{n(p_{2,n}-1)^2}\right),
\]
and
\[
\gamma=\left(\frac{-1}{m(p_{4,1}-1)^2},\ldots,\frac{-1}{m(p_{4,m}-1)^2}\right).
\]

Then $DF_{1,1}''=\left(\alpha,\beta,0,\gamma\right)$ and $DF_{3,1}''=\left(0,\beta,\alpha,\gamma\right)$.  Therefore,
\[
DF''=\begin{bmatrix}
\alpha & \beta & 0 & \gamma \\ \cdot & N & \cdot & 0 \\ 0 & \beta & \alpha & \gamma \\ \cdot & 0 & \cdot & M  
\end{bmatrix}
\]
and $\text{rank}\left(DF''\right)\geq n+m+1$.  Since the sum of forces is always zero, $DF''$ can't have full rank.  Thus, $\text{rank}\left(DF''\right)=n+m+1$, and so $\{p_{k,i}''\}$ is a non-degenerate balanced configuration.
\end{proof}

\newpage
\begin{proposition}\label{prop:crucial}
Let $N\in\mathbb{N},n_k=1,p_{k,1}=(-1)^{k+1}$ for $k=1,\ldots,N$, $T=0$, $N'=2,n_1'=1,n_2'=n\in\mathbb{N},p_{1,1}'=1$, $p_{2,i}'=a_i$ where $a_1,\ldots,a_n$ are the distinct real roots of the polynomial $p_n(z)=\sum_{k=0}^n{n \choose k}^2z^k$, and $T'=0$..  Also, let $N''=N+N'=N+2$, $n_k''=1$ for $k=1,\ldots,N+1$, $n_N''=n$, $p_{1,k}''=(-1)^{k+1}$ for $k=1,\ldots,N+1$, $p_{N+2,i}''=a_i$ for $i=1,\ldots,n$, and $T''=T+T'=0$.  Then $\{p_{k,i}''\}$ is a non-degenerate balanced configuration.
\label{many ends}
\end{proposition}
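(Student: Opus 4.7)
Proof plan. The strategy is to establish balance via Proposition~\ref{combining} and then verify non-degeneracy by a Schur complement analysis of the Jacobian.

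For balance, the plan is to combine two ingredients via Proposition~\ref{combining}: the alternating chain $p_{k,1} = (-1)^{k+1}$, $k = 1,\ldots,N$, with $n_k = 1$ and $T = 0$, together with the handles configuration $\{p_{k,i}'\}$ from Proposition~\ref{handles}. The alternating chain is itself balanced: $n_k = 1$ kills the intra-level sum, and a direct evaluation of the force formulas with $\pm 1$ values gives $F_{k,1}^+ = (-1)^k/2$ and $F_{k,1}^- = -(-1)^k/2$, which cancel. In particular $F_{1,1}^+ = -\tfrac12$, matching the value $F_{1,1}'^+ = -\tfrac12$ already computed in the proof of Proposition~\ref{8ends}. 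The remaining hypotheses $n_1 = n_1' = 1$ and $p_{1,1} = p_{1,1}' = 1$ of Proposition~\ref{combining} are clear, so the combined configuration $\{p_{k,i}''\}$ is balanced. Note that $N$ must be taken even for $p_{N+1,1}'' = 1$ to be consistent with both the chain pattern $(-1)^{k+1}$ and the formula $p_{N+1,1}'' = p_{1,1}' = 1$ produced by Proposition~\ref{combining}.

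For non-degeneracy, set $m = N+1+n$ and order variables so that the chain points $p_{k,1}''$ ($1 \leq k \leq N+1$) come first and the handle points $p_{N+2,i}''$ last, with rows indexed analogously. Since $F$ is rational in the $p_{k,i}''$, $DF''$ is complex-linear and we must show $\operatorname{rank}_{\C} DF'' = m - 1$. Writing
\[
DF'' = \begin{pmatrix} A & B \\ C & M \end{pmatrix}
\]
with $A$ of size $(N+1)\times(N+1)$ and $M$ of size $n\times n$, the crucial support observation is that for $2 \leq k \leq N$ the force $F_{k,1}''$ depends only on levels $k-1, k, k+1$ (none of them level $N+2$), so $B$ is zero outside its first and last rows and similarly $C$ is zero outside its first and last columns. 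Moreover, since $N$ is even, level $N+2$ is flanked on both sides by single points equal to $+1$ (at level $N+1$ and, via periodicity with $T''=0$, at level $1$), so the self-block $M$ coincides with the invertible matrix from the proof of Proposition~\ref{handles}.

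The plan is then to form the Schur complement $S = A - B M^{-1} C$ and prove $\operatorname{rank}_{\C} S = N$. A short computation with $p_{k,1}'' = (-1)^{k+1}$ shows that for $2 \leq k \leq N$, row $k$ of $A$ is tridiagonal with entries $A_{k,k-1} = A_{k,k+1} = (-1)^k/4$ and $A_{k,k} = (-1)^k/2$; it is therefore proportional to $(1,2,1)$ and annihilates the scaling direction $v := ((-1)^{l+1})_l$ that must lie in $\ker S$. The sparsity of $B$ and $C$ forces $BM^{-1}C$ to have support only in the four corner entries $(1,1), (1,N+1), (N+1,1), (N+1,N+1)$, so the $N-1$ interior tridiagonal rows of $S$ are unaffected. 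This reduces the kernel question to a two-equation problem at the corners, and the main obstacle is verifying that the corner correction from $BM^{-1}C$ does not introduce a second kernel direction. The expected route is to exploit that the global identity $\sum_{k,i} F_{k,i}'' = 0$ forces a one-dimensional cokernel, that the interior tridiagonal rows pin down any kernel element up to a single boundary degree of freedom, and that one explicit non-vanishing evaluation at a corner (for instance, computing $S_{1,1}$ against a candidate non-scaling kernel vector using the structure of $M^{-1}$ inherited from Proposition~\ref{handles}) then breaks the last free parameter.
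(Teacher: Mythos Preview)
Your balance argument is exactly what the paper does: verify the alternating chain $\{(-1)^{k+1}\}$ is balanced by direct computation, invoke Proposition~\ref{handles} for $\{p_{k,i}'\}$, check $F_{1,1}^+=F_{1,1}'^+=-\tfrac12$, and apply Proposition~\ref{combining}. Your parity remark (that $N$ must be even for the combination to match the stated $p_{N+1,1}''=1$) is well observed.

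For non-degeneracy your route diverges from the paper's and is left unfinished. The paper avoids the Schur complement of $M$ entirely by a simple index shift: instead of the on-diagonal chain block $A_{k,j}=\partial F_{k,1}''/\partial p_{j,1}''$ (which, as you correctly compute, is tridiagonal), it drops row $F_{1}''$ and column $p_{N+1,1}''$ and looks at the \emph{shifted} chain block $N_{i,j}=\partial F_{i+1,1}''/\partial p_{j,1}''$ for $i,j=1,\ldots,N$. Since $F_{i+1}''$ depends only on levels $i,i+1,i+2$, this $N$ is upper triangular with nonzero diagonal, hence invertible at a glance. Combined with the invertible handles block $M$, the $(N+n)\times(N+n)$ submatrix $\begin{pmatrix}N&P\\Q&M\end{pmatrix}$ is then asserted to have full rank, giving $\operatorname{rank}DF''=N+n$. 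This bypasses inverting $M$, computing the corner correction $BM^{-1}C$, and the kernel analysis you outline.

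Your Schur-complement plan is not wrong in principle, but it is genuinely incomplete at the decisive step. The $N-1$ interior rows of $S$ impose the second-order recurrence $v_{k-1}+2v_k+v_{k+1}=0$, whose solution space is two-dimensional (spanned by $((-1)^k)_k$ and $(k(-1)^k)_k$); the scaling direction accounts for one of these, and you must show the two corrected corner rows of $S$ kill the other. That requires actually evaluating entries of $BM^{-1}C$, which involves $M^{-1}$ from Proposition~\ref{handles} and is not something you can read off from structure alone. You describe this as ``one explicit non-vanishing evaluation at a corner'' but never carry it out; until you do, the argument does not close. The paper's index-shift trick is the missing idea that makes the chain block tractable without touching $M^{-1}$.
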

\begin{figure}[H]
	\centerline{ 
		\includegraphics[width=2in]{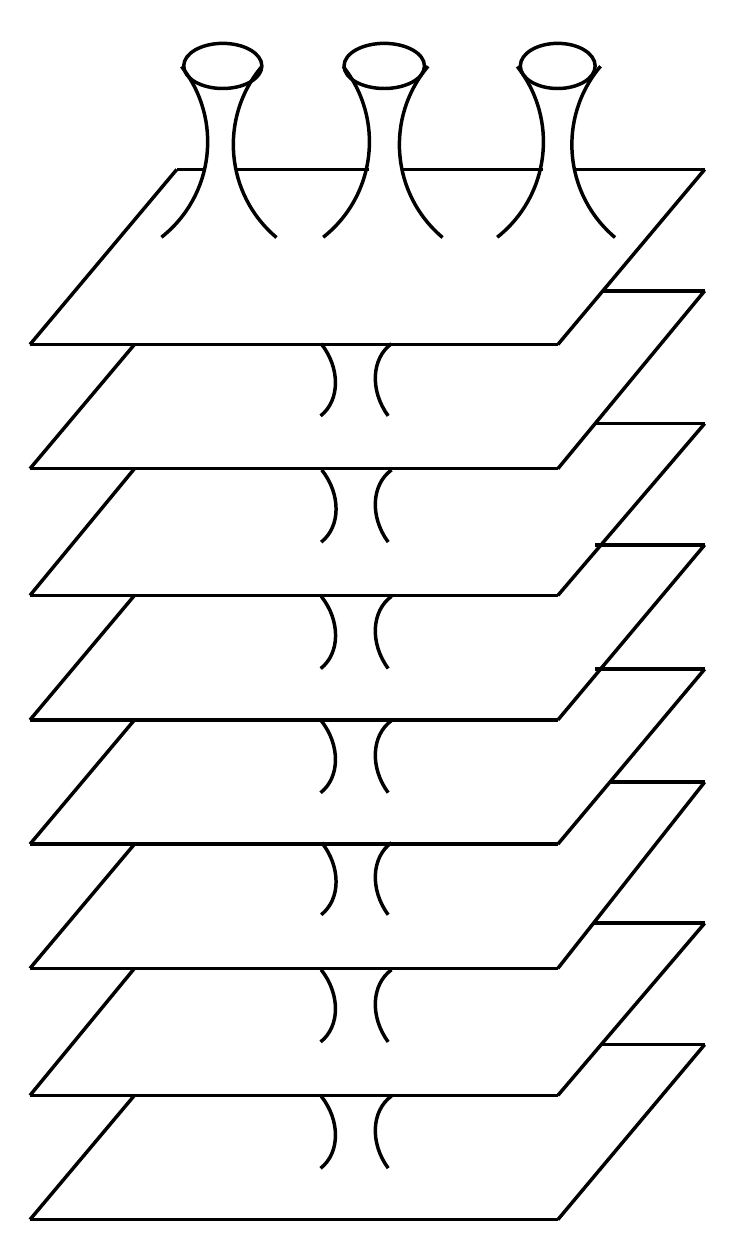}
	}
	\caption{Surface corresponding to non-degenerate balanced configuration with $n_k''=1$ for $k=1,\ldots,7$ and $n_8''=3$.
}
	\label{(1,1,1,1,1,1,1,3)}
\end{figure}  

\begin{proof}
First, we need to show that $\{p_{k,i}\}$ is a balanced configuration:  
\[
F_{2k,1}=\frac{p_{2k+1,1}}{p_{2k+1,1}-p_{2k,1}}-\frac{p_{2k,1}}{p_{2k,1}-p_{2k-1,1}}=\frac{1}{2}-\frac{-1}{-2}=0
\]
and
\[
F_{2k+1,1}=\frac{p_{2k,1}}{p_{2k,1}-p_{2k+1,1}}-\frac{p_{2k+1}}{p_{2k+1}-p_{2k+2}}=\frac{-1}{-2}-\frac{1}{2}=0.
\]
That $\{p_{k,i}'\}$ is a balanced configuration follows from proposition \ref{handles}.  Now, 
\[
F_{1,1}^+=-\frac{p_{1,1}}{p_{1,1}-p_{2,1}}=-\frac{1}{2}
\]
and, similar to the proof of proposition \ref{8ends}, $F_{1,1}'^+=-\frac{1}{2}$.  Thus, by proposition \ref{combining}, $\{p_{k,i}''\}$ is a balanced configuration.

As far as the non-degeneracy, let's first write out the forces $F_{k,i}''$:
\[
F_{1,1}''=\sum_{j=1}^n\frac{p_{n,j}''}{n\left(p_{n,j}''-p_{1,1}''\right)}-\frac{p_{1,1}''}{p_{1,1}''-p_{2,1}''};
\]
\[
F_{k,1}''=
\begin{cases}
\frac{p_{k+1,1}''}{p_{k+1,1}''-p_{k,1}''}-\frac{p_{k,1}''}{p_{k,1}''-p_{k-1,1}''}\text{, $k$ even},
\\
\frac{p_{k-1,1}''}{p_{k-1,1}''-p_{k,1}''}-\frac{p_{k,1}''}{p_{k,1}''-p_{k+1,1}''}\text{, $k$ odd}.
\end{cases}
\]
for $k=2,\ldots,N$;
\[
F_{N+1}''=\frac{p_{N,1}''}{p_{N,1}''-p_{N+1,1}''}-\sum_{j=1}^n\frac{p_{N+1,1}''}{n\left(p_{N+1,1}''-p_{N+2,j}''\right)};
\]
and
\[
F_{N+2,i}''=\sum_{j\neq i}\frac{p_{N+2,i}''+p_{N+2,j}''}{n^2\left(p_{N+2,i}''-p_{N+2,j}''\right)}+\frac{p_{1,1}''}{n\left(p_{1,1}''-p_{N+2,i}''\right)}-\frac{p_{N+2,i}''}{n\left(p_{N+2,i}''-p_{N+1,j}\right)}.
\]

Let $M$ be the $n\times n$ matrix with entries
\[
M_{i,j}=\frac{\partial F_{N+2,i}''}{\partial p_{2,j}''}.
\]
As shown in the proof of proposition \ref{handles}, $M$ is invertible.

Let $N$ be the $N\times N$ matrix with entries
\[
N_{i,j}=\frac{\partial F_{i+1,1}}{\partial p_{j,1}''}.
\]
If $k\in\{2,\ldots,N\}$ then 
\[
\frac{\partial F_{k,1}''}{\partial p_{k-1,1}''}=\frac{-1}{4}, \frac{\partial F_{k,1}''}{\partial p_{k+1,1}''}=\frac{1}{4}, \text{ and }\frac{\partial F_{k,1}''}{\partial p_{j,1}''}=0 \text{ if }j\neq k-1,k+1.
\]

Also,
\[
\frac{\partial F_{N+1,1}''}{\partial p_{k,1}''}=0 \text{ if }k<N+1 \text{ and }\frac{\partial F_{N+1,1}''}{\partial p_{N,1}''}=\frac{-1}{4}.
\]

Hence, $N_{i,i}=-\frac{1}{4}$ for $i=1,\ldots,N$ and $N_{i,j}=0$ if $j<i$, and so $N$ is an invertible matrix.  Therefore,
\[
DF''=\begin{bmatrix}
\cdot & \cdot & \cdot \\ N & \cdot & P \\ Q & \cdot & M
\end{bmatrix}
\]
where $P$ is the $N\times n$ matrix with $P_{k,i}=0$ if $k=1,\ldots,N-1$ and $P_{N,i}=\frac{1}{n(a_i-1)^2}$ and $Q$ is the $n\times N$ matrix with $Q_{k,1}=-\frac{a_k}{n(a_k-1)^2}$ for $k=1,\ldots,n$ and $Q_{k,i}=0$ if $k=2,\ldots,N$.  Thus, $DF''$ has rank $N+n$ and $\{p_{k,i}''\}$ is non-degenerate.

\end{proof}

\subsection{Other examples and non-examples}
\begin{proposition}
There does not exist a balanced configuration $\{p_{k,i}\}$ with $N=2,n_1=n_2=2$, and $T=0$.
\end{proposition}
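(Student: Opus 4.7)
The plan is to normalize the configuration using the $\C^*$-scaling symmetry and then reduce the balance equations to an algebraically over-determined polynomial system. First I would set $p_{1,1}=1$ by scaling, and denote the remaining three points by $a:=p_{1,2}$, $b:=p_{2,1}$, $c:=p_{2,2}$. Because $T=0$ the periodicity identifies $p_{0,j}=p_{2,j}$ and $p_{3,j}=p_{1,j}$, so each force $F_{k,i}$ collapses into a sum of three terms of the form $\phi(x,y):=(x+y)/(x-y)$. The general identity $\sum_{k,i}F_{k,i}\equiv 0$ leaves three genuinely independent relations among $F_{1,1},F_{1,2},F_{2,1},F_{2,2}$ to be analyzed.

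Next I would apply the fractional-linear substitution $u=(a-1)/(a+1)$, $v=(b-1)/(b+1)$, $w=(c-1)/(c+1)$, which straightens $\phi(1,x)$ to $-1/u_x$ and $\phi(x,y)$ to $(1-u_xu_y)/(u_x-u_y)$. After clearing denominators, the three independent balance equations become polynomial in $u,v,w$. The first one is linear in $u$ and solves for $u$ in terms of $v,w$; substituting into the remaining two yields two polynomial identities in $v,w$ alone. Their difference should factor in a way that forces either $v=0$ or $w=0$, i.e., $b=p_{1,1}$ or $c=p_{1,1}$, contradicting the tacit distinctness of the points at each level.

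The hard part will be the degenerate stratum of the substitution, namely when one of $a,b,c$ equals $-1$ so that its image lies at infinity. On each such boundary case, the polynomial identities above are unavailable, and the balance equations must be revisited in the original $(a,b,c)$-variables; a short but delicate direct computation is needed to extract a contradiction in each sub-case, and this is where the argument is most likely to founder if the claim has implicit hypotheses about non-triviality of the configuration. A secondary, more routine subtlety is to keep track of the cleared factors during elimination so that the resulting polynomial identities are free of spurious solutions produced by vanishing denominators; this can be handled by checking each factor separately and verifying that its vanishing again implies a coincidence of points.
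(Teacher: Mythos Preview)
Your elimination strategy cannot succeed as stated, because the proposition is not literally a ``no solutions'' claim. The balance equations for $N=2$, $n_1=n_2=2$, $T=0$ \emph{do} have a solution with all four points distinct, namely $\{p_{1,1},p_{1,2},p_{2,1},p_{2,2}\}=\{1,-1,i,-i\}$ (up to the $\C^*$-action). One checks directly that each $F_{k,i}$ vanishes there. Consequently your expected end-game---that the eliminated system forces $v=0$ or $w=0$, hence a coincidence of points---is simply false: the surviving solution has $a=-1$, $b=i$, $c=-i$, so $v,w\neq 0$ while $u$ lies at infinity. You correctly flagged the stratum $a=-1$ as the dangerous case, but on that stratum you will not extract a contradiction; instead you will find this solution sitting there.

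What the paper actually does is different in spirit. It solves the balance equations (by computer algebra) and finds that $\{1,-1,i,-i\}$ is the \emph{only} solution with distinct points, and then observes that this configuration is not a genuine $(N=2,\,n_1=n_2=2)$ object: it coincides with the balanced configuration for $N=4$, $n_k=1$, $p_{k,1}=i^{\,k-1}$, $T=0$. In other words, the proposition carries an implicit ``primitivity'' hypothesis---the configuration should not already be one of smaller $n_k$'s viewed with a coarser $N$. Your algebraic reduction could in principle be pushed to the same conclusion (unique solution $\{1,-1,i,-i\}$), but the argument must then finish with this reinterpretation step, not with a coincidence-of-points contradiction.
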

\begin{proof}
Using Mathematica to solve the balance equations, we found that the only possible solution in which the $p_{k,i}$ are distinct is $\{p_{1,1},p_{1,2},p_{2,1},p_{2,2}\}=\{1,-1,I,-I\}$.  However, this is the same as the balanced configuration with $N=4$, $n_1=n_2=n_3=n_4=1$, and $T=0$.  
\end{proof}
\begin{proposition}
There exists a non-degenerate balanced configuration $\{p_{k,i}\}$ with $N=2,n_1=2,n_2=3$, and $T=0$.
\label{Wei(2,3)}
\end{proposition}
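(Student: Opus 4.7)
The plan is to exhibit an explicit balanced configuration by a symmetry ansatz and then verify non-degeneracy directly on the resulting Jacobian, following the computational strategy of the preceding propositions in this section.

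First I would exploit the two obvious symmetries of the force system when $T=0$: the scaling $p\mapsto\lambda p$ and the inversion $z\mapsto 1/z$ (invariance of the equations under the latter is visible from the $(-1)^k$ exponents in the definition of $F_{k,i}$). A natural self-reciprocal ansatz is
\[
p_{1,1}=a,\quad p_{1,2}=1/a,\qquad p_{2,1}=b,\quad p_{2,2}=1/b,\quad p_{2,3}=-1,
\]
with $a,b\in\R$ distinct from $\pm 1$ (the second layer has odd size, so one point must be fixed by the inversion, and $-1$ avoids a collision with the first layer). Setting $q_2(z)=(z-a)(z-1/a)$ and $q_3(z)=(z-b)(z-1/b)(z+1)$, and using the standard identities $\sum_{j\ne i}(z_i-z_j)^{-1}=q''(z_i)/(2q'(z_i))$ and $\sum_j(w-w_j)^{-1}=q'(w)/q(w)$, the equations $F_{1,i}=0$ and $F_{2,i}=0$ reduce to two polynomial divisibility conditions of $q_2$ and $q_3$ dividing certain degree-$4$ polynomials in $q_2,q_3$ and their derivatives; equivalently, a system of two polynomial equations in the two unknowns $\alpha=a+1/a$ and $\beta=b+1/b$.

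Second, I would solve this reduced $(\alpha,\beta)$ system using Mathematica, exactly as is already done in Proposition~\ref{handles} and in the preceding non-existence result, retaining a solution with $|\alpha|,|\beta|>2$ so that $a,b$ are real and the five points $\{a,1/a,b,1/b,-1\}$ are distinct. The existence of such a solution is the content of the proposition. With an explicit configuration in hand, non-degeneracy is verified by computing the $10\times 10$ real Jacobian $DF$ of the force map and exhibiting an $8\times 8$ minor with nonzero determinant; the upper bound $\operatorname{rank}(DF)\le 2(m-1)=8$ is automatic from the global identity $\sum_{k,i}F_{k,i}\equiv 0$, so this is all that must be checked.

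The main obstacle is that, unlike the family in Proposition~\ref{handles}, the roots here do not appear to come from a classical hypergeometric polynomial, so the verification of both balancing and non-degeneracy relies on symbolic or numerical computation rather than on a clean closed-form identity. Once the reduced two-variable system is solved, however, all remaining checks are finite algebraic computations on an explicit configuration, and the proof becomes routine.
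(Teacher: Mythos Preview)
Your approach is essentially the same as the paper's: the paper uses exactly the self-reciprocal ansatz $p_{1,1}=a_1,\ p_{1,2}=1/a_1,\ p_{2,1}=a_2,\ p_{2,2}=-1,\ p_{2,3}=1/a_2$, records the explicit nested-radical values $a_1=4+2\sqrt{5}+\sqrt{35+16\sqrt{5}}$ and $a_2=\tfrac12\bigl(-17-9\sqrt{5}-\sqrt{690+306\sqrt{5}}\bigr)$, and then verifies non-degeneracy by row-reducing the complex $5\times 5$ Jacobian $DF$ to rank $4$. Since $F$ is holomorphic in $p$, the complex rank-$4$ check is equivalent to (and a bit cleaner than) your real $8\times 8$ minor; otherwise your plan matches the paper's proof.
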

\begin{figure}[H]
	\centerline{ 
		\includegraphics[width=3.3in]{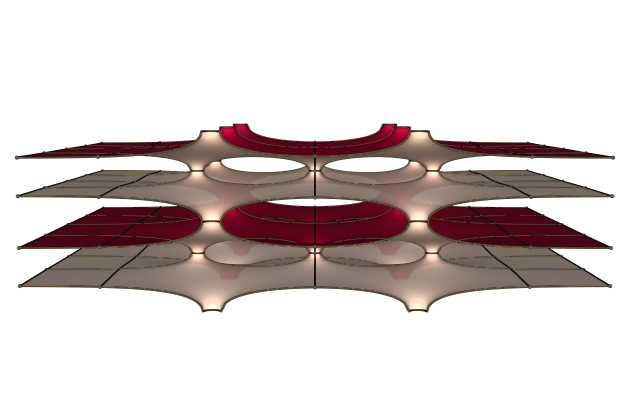}
		\includegraphics[width=2.7in]{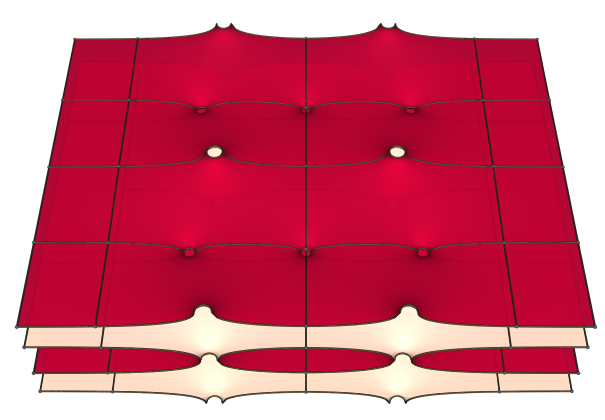}
	}
	\caption{Side and top views of a genus four surface with $N=2,n_1=2,n_2=3$.  
}
	\label{figure:Wei(2,3)}
\end{figure}  
\begin{proof}
The force equations corresponding to this setup are
\[
F_{1,i}=(-1)^i\frac{p_{1,1}+p_{1,2}}{4\left(p_{1,2}-p_{1,1}\right)}-\sum_{j=1}^3\frac{p_{1,i}+p_{2,j}}{6\left(p_{1,i}-p_{2,j}\right)}
\]
for $i=1,2$ and
\[
F_{2,i}=\sum_{j\neq i}\frac{p_{2,i}+p_{2,j}}{9\left(p_{2,i}-p_{2,j}\right)}+\sum_{j=1}^2\frac{p_{1,j}+p_{2,i}}{6\left(p_{1,j}-p_{2,i}\right)}
\]
for $i=1,2,3$.
Let $a_1=4+2\sqrt{5}+\sqrt{35+16\sqrt{5}}$ and $a_2=\frac{1}{2}\left(-17-9\sqrt{5}-\sqrt{690+306\sqrt{5}}\right)$, and let $p_{1,1}=a_1,p_{1,2}=1/a_1,p_{2,1}=a_2,p_{2,2}=-1$, and $p_{2,3}=1/a_2$.  Then $F_{k,i}=0$ for $k=1,2$ and $i=1,\ldots,n_k$, and so $\{p_{k,i}\}$ is a balanced configuration.  

Elementary row operations show that $DF$ row reduces to 
\[
DF=\begin{bmatrix}
1 & 0 & 0 & 0 & \cdot \\ 0 & 1 & 0 & 0 & \cdot \\ 0 & 0 & 1 & 0 & \cdot \\ 0 & 0 & 0 & 1 & \cdot \\ 0 & 0 & 0 & 0 & 0
\end{bmatrix}
\approx
\begin{bmatrix}
1 & 0 & 0 & 0 & 626.396 \\ 0 & 1 & 0 & 0 & 2.19707 \\ 0 & 0 & 1 & 0 & -1376.24 \\ 0 & 0 & 0 & 1 & -37.0977 \\ 0 & 0 & 0 & 0 & 0
\end{bmatrix}.
\]
Therefore, $\{p_{k,i}\}$ is a non-degenerate balanced configuration.
\end{proof}

Numerical evidence suggests:
\begin{conjecture}
There exists a non-degenerate balanced configuration $\{p_{k,i}\}$ with $N=2,n_1=2,n_2=2k-1$, and $T=0$ for $k\in\mathbb{N}$.
\end{conjecture}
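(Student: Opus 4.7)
The plan is to exploit the $z\mapsto 1/z$ symmetry available in this situation to reduce the balance equations to a Heun-type ODE for a single polynomial, and then to obtain existence for all $k$ by a continuation argument anchored at the $k=2$ solution furnished by Proposition~\ref{Wei(2,3)}.

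Set $a:=p_{1,1}$, impose $p_{1,2}=1/a$, and require that $\{p_{2,i}\}$ be invariant under $z\mapsto 1/z$ (so $-1$ lies in the set, since $n_2=2k-1$ is odd). Writing $q(z)=\prod_i(z-p_{2,i})$, which is palindromic of degree $2k-1$, and $P(z)=(z-a)(z-1/a)$, the equations $F_{1,1}=0$ and $F_{1,2}=0$ coincide under the symmetry, as do the reciprocal pairs $F_{2,i}=0$. A standard manipulation using $\sum_{j\ne i}(p_{2,i}-p_{2,j})^{-1}=q''(p_{2,i})/(2q'(p_{2,i}))$ together with the analogous sum over the roots of $P$ converts the vanishing of $F_{2,i}$ at every root of $q$ into the statement that
\[
z\,P(z)\,q''(z)+\bigl(P(z)-(2k-1)z\,P'(z)\bigr)\,q'(z)
\]
is divisible by $q$. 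Since this polynomial has degree $2k$, the quotient is linear, and one obtains a Heun-type equation with regular singularities at $0$, $a$, $1/a$, $\infty$ and two accessory parameters $A$, $B$, of which the palindromic condition on $q$ eliminates one. The remaining equation $F_{1,1}=0$ reduces, using $\sum_j(a-p_{2,j})^{-1}=q'(a)/q(a)$, to the compatibility
\[
\frac{3a^{2}-1}{4(a^{2}-1)}=\frac{a}{2k-1}\cdot\frac{q'(a)}{q(a)}.
\]

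The existence problem is thus reduced to finding, for each $k\ge 2$, a palindromic polynomial $q$ of degree $2k-1$ solving the Heun equation together with a real $a>1$ satisfying the compatibility. I would take Proposition~\ref{Wei(2,3)} as the base case $k=2$ and run a continuation argument in $k$: the set of pairs $(a,q)$ solving the equations is algebraic, and a Brouwer-degree count on a suitable compactification should produce a surviving solution branch at each step. The necessary compactness — preventing roots of $q$ from colliding with $0$, $\pm 1$, $a$, $1/a$, or each other, and keeping $a$ away from $1$ and $\infty$ — would come from the exponents of the Heun equation at its singular points together with a sign-change analysis of the compatibility function on $(1,\infty)$. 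Once existence is known, non-degeneracy follows from the block-triangular structure imposed on $DF$ by the symmetry: the $q$-block is controlled by the diagonal-dominance estimate already used in the proof of Proposition~\ref{handles}, and the extra scalar block is the derivative in $a$ of the compatibility condition.

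The main obstacle is that the Heun equation, unlike the hypergeometric equation of Proposition~\ref{handles}, admits no closed-form polynomial solutions; the existence of a degree-$(2k-1)$ polynomial solution is governed by a transcendental spectral condition on the accessory parameter, so there is no clean analogue of the identity $p_n(z)=\sum\binom{n}{k}^{2}z^{k}$. The argument must therefore be non-constructive, and the real work lies in the compactness estimates that keep the continuation alive. In addition, one must rule out reducible branches obtained from Proposition~\ref{combining} by pasting lower-genus blocks together, in order to identify the irreducible branch that yields the advertised family — exactly the technical issue flagged in the Remark following Proposition~\ref{combining}.
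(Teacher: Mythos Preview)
This statement is a \emph{conjecture} in the paper, not a theorem: the paper offers no proof at all, only the sentence ``Numerical evidence suggests'' before stating it. So there is no argument of the paper's to compare against; you are proposing a proof of something the authors leave open.

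As a research strategy your outline is reasonable and the symmetry reduction to a Heun-type equation is the natural generalisation of the hypergeometric trick in Proposition~\ref{handles}. But as written it is a programme, not a proof, and the gaps you yourself flag are genuine obstacles rather than routine details. The continuation step is the crux: you assert that a Brouwer-degree count ``should'' produce a solution branch for each $k$, but the compactness you need --- keeping the roots of $q$ away from $0$, $\pm 1$, $a$, $1/a$ and from each other, and keeping $a$ bounded away from $1$ and $\infty$ --- is not established anywhere, and for Heun polynomial solutions there is no off-the-shelf theory guaranteeing such separation. Without those estimates the degree argument cannot even be set up, let alone shown to be nonzero. Your non-degeneracy claim has a similar issue: the diagonal-dominance estimate in Proposition~\ref{handles} relies on specific sign properties of the roots of $p_n$ (they are real and negative, so the absolute values line up), and you have not shown that the Heun roots enjoy any analogous property. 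Finally, a continuation in the \emph{integer} parameter $k$ is not a continuous deformation; you would need to embed the problem in a one-parameter family (say by allowing non-integer degree or by deforming the exponent data) and then argue that the solution branch passes through the integer values, which adds another layer you have not addressed.

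In short: the paper does not prove this, and your sketch identifies a plausible route but leaves the hard analytic work --- existence of Heun polynomial solutions with the required root separation, and the resulting non-degeneracy --- entirely undone.
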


\section{Weierstrass Data}
We begin the proof of Theorem $1$ by parametrizing a set of Riemann surfaces and Weierstrass data that are candidates for the minimal surfaces we want to construct.  The construction is almost exactly the same as in \cite{tr2}.  The main difference is our definition of the Gauss map $G$.  We repeat the details for the convenience of the reader.

Let ${\bar \C}_k=\overline{\mathbb{C}}$ for $k=1,\ldots,N$, and for each $k\in\{1,\ldots,N\}$ let $G_k:{\bar \C}_k:\mapsto{\bar \C}$ be the meromorphic function defined by
\[
G_k(z)=\delta_k z\left(
\sum_{i=1}^{n_k} \frac{\alpha_{k,i}}{z-a_{k,i}} - 
\sum_{i=1}^{n_{k-1}} \frac{\beta_{k,i}}{z-b_{k,i}}
\right)
\]
where $\delta_k\in(0,\infty)$, the poles $a_{k,i}$ and $b_{k,i}$ are distinct non-zero complex numbers, and the $\alpha_{k,i}$ and $\beta_{k,i}$ are non-zero complex numbers such that
\[
\sum_{i=1}^{n_k} \alpha_{k,i}=
\sum_{i=1}^{n_{k-1}} \beta_{k,i}
=1.
\]
The first equality ensures that $G_k(z)$ has a zero at $\infty$. The zeroes at $0$ and $\infty$ are needed to ensure that the Gauss map is vertical at the annular ends.  The $\delta_k$ terms will be used to ensure that the periods at the ends are the same.  In \cite{tr2}, the corresponding map is $g_k(z)=\frac{G_k(z)}{\delta_k z}$.

Let $\alpha_k=(\alpha_{k,1},\ldots,\alpha_{k,n_k})$ and $\alpha=(\alpha_1,\ldots,\alpha_N)$, and define $\beta,\gamma,a$ and $b$ in the same way. Let $\delta=(\delta_1,\ldots,\delta_N)$ and $X=(\alpha,\beta,\delta,\gamma,a,b)$.  The set $X$ is our parameter space used to construct the Riemann surfaces and Weierstrass data. Within this space,  we will solve the period problem.

The surfaces we are constructing have $n_k$ catenoid-shaped necks between the $k$ and $k+1$ levels.  In oder to achieve this, we use the $G_k$ functions to create coordinates near each pole and identify an annulus centered at $a_{k,i}\in{\bar \C}_k$ with an annulus centered at $b_{k+1,i}\in{\bar \C}$ for $k=1\ldots,N$ and $i=1,\ldots,n_k$ using the following procedure.

The function $v_{k,i} = 1/G_k$ has a simple zero at $a_{k,i}$.  Thus, there exists $\epsilon>0$ such that $v_{k,i}$ is a biholomorphic map from a neighborhood of $a_{k,i}\in{\bar \C}_k$ to the disk $D_\epsilon(0)$. In this manner, $v=v_{k,i}$ is a complex coordinate in a neighborhood of $a_{k,i}$. Similarly, $w=w_{k+1,i}=1/G_{k+1}$ is a biholomorphic map from a neighborhood of $b_{k+1,i}\in{\bar \C}_{k+1}$ to the disk  $D_\epsilon(0)$.  Thus, for each pair $a_{k,i}$ and $b_{k+1,i}$ we get the pair of coordinates $v=v_{k,i}$ and $w=w_{k+1,i}$. 

\begin{figure}[htp]
\begin{center}
\includegraphics[width=3.5in]{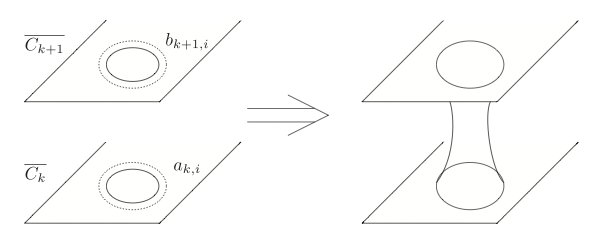}
\caption{Gluing construction.}
\end{center}
\end{figure}

Choose a complex gluing parameter $r$ with  parameter $|r|\in (0,\epsilon^2)$ and remove the disks $|v|\le \frac{|r|}\epsilon$ and $|w|\le \frac{|r|}\epsilon$ from ${\bar \C}_k$ and ${\bar \C}_{k+1}$, respectively. Then, we create a conformal model of the catenoid-shaped neck by identifying the points in ${\bar \C}_{k}$ 
satisfying
\[
\frac {|r|} \epsilon<|v|<\epsilon
\]
with points in ${\bar \C}_{k+1}$ satisfying
\[
\frac {|r|} \epsilon<|w|<\epsilon
\]
by the equation
\[
v w = r.
\]

Let $\Sigma$ be the compact Riemann surface created by repeating this procedure for each $k=1,\ldots,N$ and $i=1,\ldots,n_k$.  Denote by $\Sigma^*$ the surface obtained by removing the points $0_k$ and $\infty_k$ from $\Sigma$ for all $k$.  When $r=0$, define $\Sigma$ as the disjoint union ${\bar \C}_1\cup{\bar \C}_2\cup\ldots\cup{\bar \C}_N$. This is the underlying Riemann surface for our minimal surface candidates.

Next, the Gauss map $G:\Sigma\to\bar {\C}$ is defined by
\begin{equation}
G(z)=
\begin{cases}
\sqrt r G_k(z) & \text{if $z\in {\bar \C}_k$, $k$ even},
\\
\frac1{\sqrt{r} G_k(z)} & \text{if $z\in {\bar \C}_k$, $k$ odd}.
\end{cases}
\end{equation}

If $k$ is even, then $G=\sqrt r/v$ on ${\bar\C}_k$ and 
$G=w/\sqrt r$ on ${\bar\C}_{k+1}$.  If $k$ is odd, then $G=v/\sqrt{r}$ on ${\bar \C}_k$ and $G=\sqrt{r}/w$ on ${\bar \C}_{k+1}$.  Therefore, the relation $vw=r$ implies that $G$ is well-defined on $\Sigma$.

Before defining our height differential $\eta$, we need to choose a basis of the homology of $\Sigma$.  Define $A_{k,i}$ to be the circle $|v_{k,i}|=\epsilon$ in ${\bar \C}_k$ oriented positively. The construction of $\Sigma$ implies that this is homotopic to the circle $\abs{w_{k+1,i}}=\epsilon$ oriented negatively.
Choose $B_{k,i}$, $i\geq2$, to be a closed curve in $\Sigma$ such that $A_{k,1}\cdot B_{k,i}=-1$, $A_{k,i}\cdot B_{k,i}=1$, $A_{m,n}\cdot B_{k,i}=0$ if $m\neq k$, and $B_{k,i}\cdot B_{m,n}=0$ if $(m,n)\neq(k,i)$.  Finally, choose $B_{1,1}$ to be a closed curve such that $A_{k,1}\cdot B_{1,1}=1$ for $k=1,\ldots,N$ and it doesn't intersect any of the above curves.  Then a basis of $H_1(\Sigma)$ is given by the curves $A_{1,1}, B_{1,1}, A_{k,i}$, and $B_{k,i}$, with $k=1,\ldots,N$ and $i=2,\ldots,n_k$.  Note that if we replace the $B_{1,i}$ curves by $B_{1,i}'=B_{1,i}+B_{1,1}$ then we get a canonical basis of $H_1(\Sigma)$.

\begin{figure}[H]
\begin{center}
\includegraphics[width=3in]{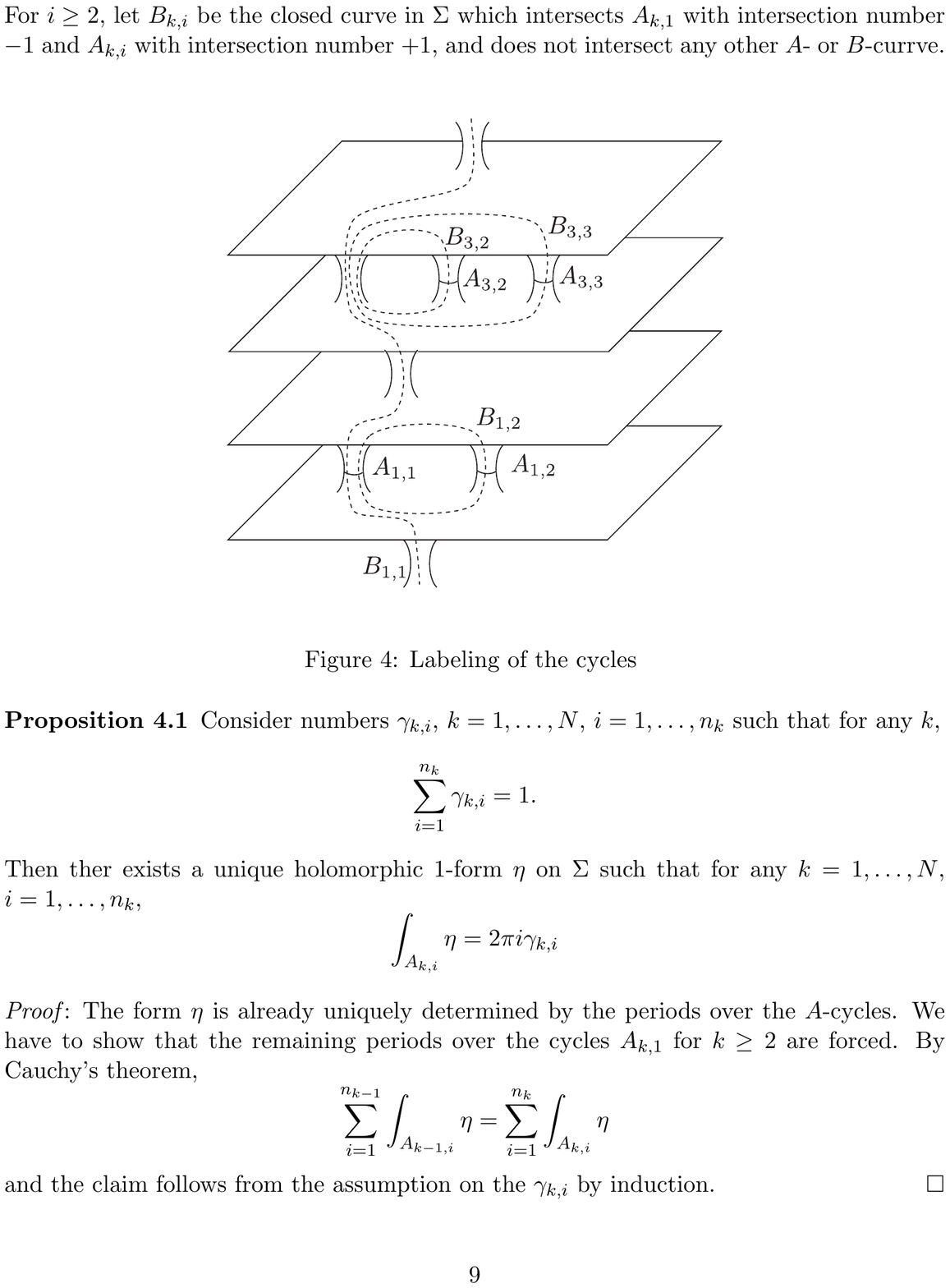}
\caption{Labeling the cycles.}
\end{center}
\end{figure}

\begin{proposition}[\cite{tr2}]
Consider numbers $\gamma_{k,i}$, $k=1,\ldots,N$, $i=1,\ldots,n_k$ such that for any $k$,
\[
\sum_{i=1}^{n_k} \gamma_{k,i} = 1.
\]
Then there exists a unique holomorphic 1-form $\eta$ on $\Sigma$ such that for any $k=1,\ldots,N$, $i=1,\ldots,n_k$,
\[
\int_{A_{k,i}} \eta = 2\pi i \gamma_{k,i}
\]
\end{proposition}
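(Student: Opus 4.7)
The plan is to reduce the statement to the classical period-isomorphism theorem on a compact Riemann surface, and then use the homological relations among the cycles $A_{k,i}$ to show that the single constraint $\sum_{i}\gamma_{k,i}=1$ is exactly what is needed to reconcile the $\sum_{k}n_{k}$ prescribed periods with the $g$-dimensional space of holomorphic 1-forms.

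First I would treat the smooth case $r\ne 0$, where $\Sigma$ is compact of genus $g=1+\sum_{k}(n_{k}-1)$. In the canonical homology basis set up just above the statement, there are exactly $g$ A-cycles, namely $A_{1,1}$ together with $A_{k,i}$ for $k=1,\ldots,N$ and $i=2,\ldots,n_{k}$. By the classical period theorem for compact Riemann surfaces, the map $\omega\mapsto\bigl(\int_{A_{\alpha}}\omega\bigr)_{\alpha}$ is a $\C$-linear isomorphism from the $g$-dimensional space of holomorphic 1-forms onto $\C^{g}$. Prescribing $\int_{A_{1,1}}\eta=2\pi i\gamma_{1,1}$ and $\int_{A_{k,i}}\eta=2\pi i\gamma_{k,i}$ for $i\ge 2$ therefore produces a unique holomorphic 1-form $\eta$ realizing these values.

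The remaining step is to verify that the unspecified periods $\int_{A_{k,1}}\eta$ automatically come out equal to $2\pi i\gamma_{k,1}$. For this I would observe that the piece of $\Sigma$ lying in $\bar\C_{k}$ is a sphere with $n_{k}+n_{k-1}$ open disks removed, whose boundary in $\Sigma$, oriented so that the surface lies on its left, is homologous to $-\sum_{i=1}^{n_{k}}A_{k,i}+\sum_{j=1}^{n_{k-1}}A_{k-1,j}$ (using that the gluing makes $|w_{k,j}|=\epsilon$ positively oriented homologous to $-A_{k-1,j}$). Since this boundary bounds a 2-chain, it is null-homologous, which gives $\sum_{i}A_{k,i}=\sum_{j}A_{k-1,j}$ in $H_{1}(\Sigma,\Z)$ for every $k$. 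Integrating $\eta$ over these identities and chaining them with $\sum_{i}\gamma_{1,i}=1$ yields $\sum_{i}\int_{A_{k,i}}\eta=2\pi i$ for every $k$; together with $\sum_{i}\gamma_{k,i}=1$ this forces $\int_{A_{k,1}}\eta=2\pi i\gamma_{k,1}$.

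For the noded limit $r=0$, $\Sigma$ is the disjoint union of the $\bar\C_{k}$, and one constructs $\eta$ directly on each $\bar\C_{k}$ by partial fractions as a meromorphic 1-form with simple poles of residue $\gamma_{k,i}$ at $a_{k,i}$ and $-\gamma_{k-1,i}$ at $b_{k,i}$. Existence and uniqueness on the Riemann sphere reduce to the vanishing of the sum of residues, which is precisely the hypothesis $\sum_{i}\gamma_{k,i}=\sum_{i}\gamma_{k-1,i}=1$. There is no genuine obstacle in this argument; the substantive input is the classical period isomorphism, and the only point requiring care is the identification of the $N-1$ homological dependencies among the $\sum_{k}n_{k}$ cycles $A_{k,i}$ so that the period count matches the genus and the linear constraint matches the hypothesis.
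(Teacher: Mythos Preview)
Your argument is correct. The paper itself does not supply a proof: it simply records that ``the proof is the same as in the proof of proposition~5 from section~3.3 in \cite{tr2}.'' What you wrote is the standard self-contained version of that argument---use the A-period isomorphism on the $g$ A-cycles of the canonical basis $\{A_{1,1}\}\cup\{A_{k,i}:i\ge 2\}$ to produce a unique $\eta$, then recover the remaining $A_{k,1}$-periods from the homological identity $\sum_i A_{k,i}\sim\sum_j A_{k-1,j}$ coming from the planar piece $\bar\C_k$, which matches the hypothesis $\sum_i\gamma_{k,i}=1$. Your treatment of the noded limit $r=0$ also agrees with what the paper does later in Section~6, where $\eta_k$ is written explicitly as the partial-fraction form with the prescribed residues. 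There is nothing to correct; your write-up simply fills in what the paper leaves to the citation.
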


The proof is the same as in the proof of proposition $5$ from section $3.3$ in \cite{tr2}. 

We now have a space of Riemann surfaces and Weierstrass data that are candidates for the surfaces we want to construct.  The parameters are given by $(r,X))$, and we will look at what happens when $r\rightarrow 0$.

\section{Constraints on the Weierstrass data and period conditions}
We express the Weierstrass data using the notation $$\psi(z)=\text{Re}\int_{z_0}^z\left(\phi_1,\phi_2,\phi_3\right)$$ where $z_0\in\Sigma$ is a base point, $\phi_1=\frac{1}{2}\left(\frac{1}{G}-G\right)\eta$, $\phi_2=\frac{i}{2}\left(\frac{1}{G}+G\right)\eta$, and $\phi_3=\eta$.  In order that $(\Sigma,G,\eta)$ are the Weierstrass data of a complete, doubly periodic minimal surface with horizontal embedded ends, we need:

\begin{enumerate}
\item
For any $p\in\Sigma^*$, $\eta$ has a zero at $p$ if and only of $G$ has either a zero or pole at $p$, with the same
multiplicity. At each puncture $0_k$ and $\infty_k$, $G$ has a zero or pole of order $n\geq1$ and $\eta$ has a zero of order $n-1$.
\item
For any closed curve $c$ om $\Sigma^*$, $\re \int_c \phi_j$ is an integral linear combination of two linearly independent vectors of $\R^3$. We denote the set of these linear combinations by $\Lambda$.
\end{enumerate}

As the zeroes and poles of $G$ are the zeroes of the $G_k$, we can write condition (1) equivalently as
\begin{enumerate}
\item[1'.]
The zeroes of $\eta$ are the zeroes of $G_k dz/z$, $k=1,\ldots,N$, with the same multiplicity.
\end{enumerate}

If condition $(1)$ is satisfied then the 1-forms $\phi_1$ and $\phi_2$ have poles only at the punctures of $\Sigma^*$, and so condition $(2)$ needs to be checked only for a canonical basis of the homology of $\Sigma$ and for small loops around the punctures.  Therefore we can rewrite the condition (2) as follows:
Write $\phi=(\phi_1,\phi_2,\phi_3)$.

\begin{enumerate}
\item[2'.1] For any $k=1,\ldots,N$ and $i=1,\ldots,n_k$, 
\[
\re \int_{A_{k,i}} \phi= 0
\]
\item[2'.2]
For any   $k=1,\ldots,N$ and $i=2,\ldots,n_k$, 
\[
\re \int_{B_{k,i}} \phi = 0
\]
\item[2'.3]
\[
\re \int_{B_{1,1}} \phi \in \Lambda
\]
\item[2'.4] For any $k=1,\ldots,N$,
\[
\re \int_{\partial D_\epsilon(0_k)} \phi \in \Lambda
\]
\item[2'.5] For any $k=1,\ldots,N$,
\[
\re \int_{\partial D_\epsilon(\infty_k)} \phi \in \Lambda
\]
\end{enumerate}
If $\re \int_{A_{k,i}} \phi= 0$ and $\re \int_{\partial D_\epsilon(0_k)} \phi \in \Lambda$ for each $k,i$ then the period condition at $\infty_k$ is automatically satisfied by Cauchy's theorem.
Observe that the period vectors $\re \int_{\partial D_\epsilon(0_k)} \phi $ and $\re \int_{\partial D_\epsilon(\infty_k)} \phi $ are necessarily horizontal, as $\eta=\phi_3$ is holomorphic at $0_k$ and $\infty_k$.

\section{Height differential extends holomorphically to $r=0$}

This section follows directly from \cite{tr2}.  Recall that when $r=0$, we defined $\Sigma$ as the disjoint union ${\bar \C}_1\cup
{\bar \C}_2\cup\ldots\cup{\bar \C}_N$.  The Gauss map is defined  when $r=0$ and depends holomorphically on $r$.  We need the same to be true for the height differential.  When $r=0$, define $\eta$ by $\eta_k\circ{\bar \C}_k$ where $\eta_k$ is the unique meromorphic 1-form on ${\bar \C}_k$ with simple poles at $a_{k,i}$ and $b_{k,i}$ with residues $\gamma_{k,i}$ and $-\gamma_{k-1,i}$, i.e.
\[
\eta_k=
\left(\sum_{i=1}^{n_k}\frac{\gamma_{k,i}}{z-a_{k,i}} - 
\sum_{i=1}^{n_{k-1}}\frac{\gamma_{k-1,i}}{z-b_{k,i}}
\right)\, dz.
\]
Observe that our conditions ensure that $\eta$ is holomorphic at $0_k$ and $\infty_k$ for each $k$.

The next two propositions are from section $4$ in \cite{tr2}.  As our height differential is defined in the same way as in \cite{tr2}, the proofs of these propositions are the same.
\begin{proposition}[\cite{tr2}]
Let $z\in\overline{\C}_k,\,z\neq a_{k,i},\,z\neq b_{k,i}.$  Then $r\mapsto \eta(z)$ is holomorphic in a neighborhood of $0$.
\label{prop holo1}
\end{proposition}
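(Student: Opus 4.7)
The plan is to follow the blueprint of Section~4 of \cite{tr2}: construct $\eta$ on the plumbed surface as an explicit meromorphic piece with the prescribed residues on each component, plus a holomorphic correction that is built by a $r$-dependent linear fixed-point argument. Concretely, on each $\bar\C_k$ take the limit form
\[
\eta_k^{(0)} = \left(\sum_{i=1}^{n_k}\frac{\gamma_{k,i}}{z-a_{k,i}} - \sum_{i=1}^{n_{k-1}}\frac{\gamma_{k-1,i}}{z-b_{k,i}}\right)\, dz,
\]
which already has the correct residues (hence the correct $A$-periods $2\pi i\gamma_{k,i}$) and which is exactly the form we want to recover at $r=0$. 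Look for $\eta$ in the form $\eta|_{\bar\C_k} = \eta_k^{(0)} + \tau_k$, where $\tau_k$ is a holomorphic 1-form on $\bar\C_k$ outside the plumbing disks, normalized to have vanishing periods against each $A_{k,i}$.

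For $\eta$ to be holomorphic across each neck $vw=r$, the Laurent expansions of $\eta_k^{(0)}+\tau_k$ in $v=v_{k,i}=1/G_k$ and of $\eta_{k+1}^{(0)}+\tau_{k+1}$ in $w=w_{k+1,i}=r/v$ must agree. Because $dw/w=-dv/v$, the residue parts of $\eta_k^{(0)}$ and $\eta_{k+1}^{(0)}$ already cancel, so the matching condition becomes a linear equation on the Taylor coefficients of $\tau_k,\tau_{k+1}$ at the nodes. Crucially, each Taylor coefficient of $\tau_{k+1}$ at $b_{k+1,i}$ is multiplied by a positive power of $r$ when pulled through the change of variable $w=r/v$, so the coupling between different components, and even between the holomorphic and principal parts at the same node, is $O(r)$.

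Thus the problem of choosing the corrections $(\tau_1,\ldots,\tau_N)$ reduces to a finite-dimensional linear system (after imposing vanishing $A$-periods and the vanishing conditions at the punctures $0_k,\infty_k$) whose coefficient matrix has the shape $I + r\,A(r)$, with $A(r)$ holomorphic in $r$ near $0$. For $|r|$ sufficiently small this matrix is invertible and its inverse is holomorphic in $r$, so the unique solution $\tau_k=\tau_k(r)$ depends holomorphically on $r$, and consequently $\eta(z)$ does so too at any fixed $z\in\bar\C_k$ with $z\neq a_{k,i},b_{k,i}$.

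The main technical obstacle is to set this up cleanly so that uniqueness and holomorphic dependence come automatically. I would handle it exactly as in \cite{tr2}: pick a basis of the finite-dimensional space of admissible corrections, express the matching condition in that basis via the residue theorem (comparing integrals over $|v|=\epsilon$ and $|w|=\epsilon$), and read off that the matrix of the system is $I+O(r)$. The only change from \cite{tr2} is that our plumbing coordinates come from $v=1/G_k$ rather than $1/g_k$, but since $G_k$ differs from $g_k$ only by a nonvanishing factor $\delta_k z$, this affects no step of the argument, and the proposition follows.
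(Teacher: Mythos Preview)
Your proposal is correct and follows the same approach as the paper, which simply defers to \cite{tr2} for this proposition since the height differential is defined identically there; your sketch of the $\eta_k^{(0)}+\tau_k$ decomposition with an $I+O(r)$ matching system is exactly Traizet's argument. One small caveat: the system for the corrections $\tau_k$ is not literally finite-dimensional (the Laurent matching at each node involves infinitely many coefficients), so Traizet runs the $I+rA(r)$ inversion as a contraction on a Banach space of holomorphic functions rather than as a matrix inversion---but since you explicitly defer to \cite{tr2} for the details this does not affect your argument.
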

\begin{proposition}[\cite{tr2}]
Let $v=v_{k,i}$.  On the domain $\frac{\abs{r}}{\epsilon}<\abs{v}<\epsilon$ of $\Sigma$, we have the formula
\[
\eta=f\left(v,\frac{r}{v}\right)\frac{dv}{v}=-f\left(\frac{r}{w},w\right)\frac{dw}{w}
\]
where $f$ is a holomorphic function of two complex variables defined in a neighborhood of $(0,0)$.
\label{prop holo2}
\end{proposition}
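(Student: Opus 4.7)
My plan is to exploit the symmetry of the node relation $vw=r$ together with the holomorphic dependence on $r$ established in Proposition \ref{prop holo1}. On the annulus $|r|/\epsilon<|v|<\epsilon$ the form $\eta$ is holomorphic in $v$, so it admits a Laurent expansion
\[
\eta=\sum_{n\in\mathbb{Z}} c_n(r)\,v^n\,\frac{dv}{v},
\]
with coefficients given by $c_n(r)=\frac{1}{2\pi i}\oint_{|v|=s} v^{-n}\eta$ on any fixed circle $|v|=s\in(|r|/\epsilon,\epsilon)$. Proposition \ref{prop holo1} allows me to differentiate under the integral sign (for $|r|$ small enough that $s$ stays inside the annulus), so every $c_n(r)$ is holomorphic in $r$ near $0$.

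Second, I would apply the same argument in the $w$-coordinate on the other side of the node to obtain $\eta=\sum_{m\in\mathbb{Z}}\tilde c_m(r)\,w^m\,\frac{dw}{w}$ with each $\tilde c_m(r)$ holomorphic at $r=0$. Because the two expansions describe the same 1-form on the overlap, and the gluing identity $vw=r$ forces $w^m=r^m v^{-m}$ and $\frac{dw}{w}=-\frac{dv}{v}$, comparing the coefficient of $v^n$ in the two series yields
\[
c_{-m}(r)=-r^m\,\tilde c_m(r)\qquad\text{for all }m\ge 0.
\]
This divisibility of the negative-index coefficients by $r^m$ is the key ingredient.

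Third, I would assemble the bivariate function $f$. Expanding $c_n(r)=\sum_{j\ge 0}c_{n,j}r^j$ for $n\ge 0$ and $c_{-m}(r)=r^m\sum_{j\ge 0}h_{m,j}r^j$ for $m\ge 1$, and then substituting $r=v\cdot(r/v)$, each monomial $c_{n,j}r^j v^n$ becomes $c_{n,j}v^{n+j}(r/v)^j$ and each monomial $h_{m,j}r^{m+j}v^{-m}$ becomes $h_{m,j}v^j(r/v)^{m+j}$. Summing over all indices yields $\eta=f(v,r/v)\,\frac{dv}{v}$ with
\[
f(v,y)=\sum_{a\ge b\ge 0}c_{a-b,b}\,v^a y^b\;+\;\sum_{0\le a<b}h_{b-a,a}\,v^a y^b.
\]
The symmetric identity $\eta=-f(r/w,w)\,\frac{dw}{w}$ is then immediate from $vw=r$ together with $\frac{dw}{w}=-\frac{dv}{v}$.

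The main obstacle is to promote $f$ from a formal power series to an honest holomorphic function on a neighbourhood of $(0,0)\in\C^2$. For this I would fix $\rho\in(0,\epsilon)$, bound $\eta$ uniformly on $|v|=\rho$ for $|r|$ small, derive Cauchy estimates $|c_n(r)|\le M\rho^{1-n}$ and $|\tilde c_m(r)|\le M\rho^{1-m}$, and then apply a second Cauchy estimate in $r$ to the Taylor coefficients $c_{n,j}$ and $h_{m,j}$. The resulting geometric bounds sum against $|v|^a|y|^b$ in a small bidisk around the origin, giving the required convergence of $f$ near $(0,0)$. This two-scale estimate (first in $v$, then in $r$) is the only non-formal step; everything else is bookkeeping with the Laurent coefficients.
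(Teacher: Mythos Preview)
Your argument is correct and is precisely the standard Laurent--matching argument that Traizet uses: the paper does not give its own proof here but simply cites \cite{tr2}, and what you have written is essentially that proof (expand $\eta$ in Laurent series in $v$ and in $w$, use the node relation $vw=r$ to deduce $c_{-m}(r)=-r^m\tilde c_m(r)$, and reorganise into a bivariate power series, with Cauchy estimates for convergence). The only point to tighten is the appeal to Proposition~\ref{prop holo1}: to pass holomorphy in $r$ through the contour integral you need joint holomorphy (or at least local uniform boundedness) of $\eta$ in $(r,z)$ on a fixed circle $|v|=\rho$, which is what underlies both the analyticity of $c_n(r)$ and the uniform bound you use for the convergence step; this is implicit in Traizet's setup and is routine, but worth stating explicitly.
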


We can use propositions \ref{prop holo1} and \ref{prop holo2} to estimate the integrals of $\eta,G\eta$, and $1/G\eta$ on the homology cycles and on cycles around the punctures.  These are necessary to solve the period problem when $r=0$.  As in \cite{tr2}, we will use a term $\text{holo}(r,X)$, meaning a holomorphic function in terms of $(r,X)$ in a neighborhood of $(0,X_0)$.  
\begin{proposition}[\cite{tr2}]
\[
\begin{split}
\int_{A_{k,i}}G^{(-1)^k}\eta &=\sqrt{r}\left(2\pi i \text{ res}_{a_{k,i}}G_k\eta_k+\text{$r$ holo}(r,X)\right) \\
\int_{A_{k,i}}G^{(-1)^{k+1}}\eta &=\sqrt{r}\left(-2\pi i \text{ res}_{b_{k+1,i}}G_{k+1}\eta_{k+1}+\text{$r$ holo}(r,X)\right)  \\
\int_{B_{k,i}}\eta &=\left(\gamma_{k,i}-\gamma_{k,1}\right)\text{log$(r)$}+\text{holo}(r,X) \\
\int_{B_{k,i}}G^{(-1)^k}\eta &=\frac{1}{\sqrt{r}}\left(\int_{b_{k+1,i}}^{b_{k+1,1}}G_{k+1}^{-1}\eta_{k+1}+r\text{ log$(r)$ holo$(r,X)+r$ holo$(r,X)$}\right) \\
\int_{B_{k,i}}G^{(-1)^{k+1}}\eta &=\frac{1}{\sqrt{r}}\left(\int_{a_{k,1}}^{a_{k,i}}G_{k}^{-1}\eta_{k}+r\text{ log$(r)$ holo$(r,X)+r$ holo$(r,X)$}\right) \\
\end{split}
\]
The proofs are the same as in \cite{tr2}, section 5. The following proposition takes care of the different nature of our annular ends compared to the planar ends on \cite{tr2}.
\label{prop expansion1}
\end{proposition}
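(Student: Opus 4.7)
The plan is to reduce each integral to a residue calculation by splitting the integration cycle into (i) pieces lying outside all neck regions, on which Proposition \ref{prop holo1} makes the integrand holomorphic in $r$, and (ii) pieces crossing a neck, on which Proposition \ref{prop holo2} supplies the explicit form $\eta = f(v, r/v)\, dv/v = -f(r/w, w)\, dw/w$ for a two-variable holomorphic $f$. Substituting the sheet-wise formulas for $G$ (one checks by parity that near $a_{k,i}$ on $\bar\C_k$ one has $G^{(-1)^k} = \sqrt{r}/v$ and $G^{(-1)^{k+1}} = v/\sqrt{r}$, and symmetrically on the $\bar\C_{k+1}$ side in the $w$-coordinate) turns every integrand into an explicit series in $v$, $r/v$, and $\sqrt{r}$. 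Expanding $f(x,y) = \sum_{p,q\ge 0} c_{pq}\, x^p y^q$, any contour integral of shape $\oint_{|v|=\epsilon} f(v, r/v)\, v^m\, dv$ collapses to $2\pi i \sum_{q\ge \max(0,m+1)} c_{q-m-1, q}\, r^q$; this is essentially the only analytic ingredient.

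For the $A$-cycles, the entire cycle sits in a single neck. On the $\bar\C_k$ side we have $G^{(-1)^k}\eta = \sqrt{r}\, f(v, r/v)\, v^{-2}\, dv$, and the displayed residue formula gives
\[
\int_{A_{k,i}} G^{(-1)^k}\eta = \sqrt{r}\bigl(2\pi i\, c_{10} + 2\pi i\, c_{21}\, r + \cdots\bigr).
\]
Setting $r = 0$ identifies $f(v,0)\, dv/v$ with $\eta_k$ near $a_{k,i}$ (via the residue normalization $\text{res}_{a_{k,i}}\eta_k = \gamma_{k,i}$), so $c_{10} = \text{res}_{a_{k,i}} G_k \eta_k$, yielding the first line. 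For the second line one may either work on the $\bar\C_k$ side with $G^{(-1)^{k+1}}\eta = f(v, r/v)\, dv/\sqrt r$, or on the $\bar\C_{k+1}$ side; setting $r = 0$ in $-f(0, w)\, dw/w = \eta_{k+1}$ near $b_{k+1,i}$ identifies the leading coefficient with $-\text{res}_{b_{k+1,i}} G_{k+1}\eta_{k+1}$, producing the minus sign.

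For the $B$-cycles, partition $B_{k,i}$ into two outer arcs (on $\bar\C_k$ from near $a_{k,1}$ to near $a_{k,i}$, and on $\bar\C_{k+1}$ from near $b_{k+1,i}$ to near $b_{k+1,1}$) plus two neck crossings at $A_{k,1}$ (sign $-1$) and $A_{k,i}$ (sign $+1$). Every outer-arc contribution is $\text{holo}(r, X)$ by Proposition \ref{prop holo1}, with its $r=0$ value an explicit integral on $\overline\C_k$ or $\overline\C_{k+1}$. Across each neck, the leading constant of $\eta = f(v, r/v)\, dv/v$ integrates from $|v|=\epsilon$ down to $|v| = |r|/\epsilon$ to produce $f(0,0)\log r + \text{holo}(r,X)$; since $f(0,0) = \gamma_{k,j}$ at the neck $A_{k,j}$, the signed sum over the two crossings is exactly $(\gamma_{k,i} - \gamma_{k,1})\log r$. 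For the two $G^{\pm 1}\eta$ integrals, exactly one of the two sheets carries a $1/\sqrt{r}$ factor in the integrand (the $\bar\C_{k+1}$ side for $G^{(-1)^k}\eta$, the $\bar\C_k$ side for $G^{(-1)^{k+1}}\eta$); that sheet's outer-arc integral evaluates at $r = 0$ to precisely the stated line integral, while the other outer arc together with both neck contributions repackage into the error $\tfrac{1}{\sqrt r}\bigl(r\log r\cdot \text{holo}(r,X) + r\cdot\text{holo}(r,X)\bigr)$.

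The main burden is bookkeeping: the parity of $k$ selects which of $G, 1/G$ carries the $\sqrt r$; the identification $vw=r$ reverses orientation between $|v|=\epsilon$ and $|w|=\epsilon$; and the intersection signs $A_{k,1}\cdot B_{k,i}=-1$, $A_{k,i}\cdot B_{k,i}=+1$ determine the direction of each outer arc and hence the orientation of the $r=0$ line integrals appearing in the statement. Once these signs are tracked, the analytic estimates proceed exactly as in \cite{tr2}, Section 5, since our local neck model is identical; the only modification, namely the $\sqrt r$ factors in our definition of $G$, enters only as a multiplicative coefficient.
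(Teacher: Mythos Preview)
Your proposal is correct and follows precisely the approach of \cite{tr2}, Section 5, which is all the paper invokes here; the paper gives no independent argument beyond that citation. You have in fact supplied considerably more detail than the paper itself, correctly tracking the parity conventions for $G$, the orientation reversal under $vw=r$, and the intersection signs on $B_{k,i}$.
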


\begin{proposition}
\[
\begin{split}
\int_{\partial D_\epsilon(0_k)}G^{(-1)^k}\eta &=0 \\
\int_{\partial D_\epsilon(0_k)}G^{(-1)^{k+1}}\eta &=\frac{1}{\sqrt{r}}\left(2\pi i \text{ res}_{0}\frac{1}{G_k}\eta_{k}+\text{$r$ holo}(r,X)\right)  \\
\end{split}
\]
\label{prop expansion2}
\end{proposition}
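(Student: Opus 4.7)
The plan is to reduce both integrals to residue computations on a single component $\bar\C_k$, using that the contour $\partial D_\epsilon(0_k)$ lies well inside the unglued part of $\bar\C_k$ for all sufficiently small $r$. First, near $0_k$ we have $G^{(-1)^k}=\sqrt{r}\,G_k(z)$ in both parities: for $k$ even, $G=\sqrt r\,G_k$ directly; for $k$ odd, $G^{-1}=\sqrt r\,G_k$. Similarly, $G^{(-1)^{k+1}}=1/(\sqrt r\,G_k)$ near $0_k$ in both parities. The function $G_k(z)=\delta_k z(\sum\alpha_{k,i}/(z-a_{k,i})-\sum\beta_{k,i}/(z-b_{k,i}))$ has a simple zero at $z=0$ and the $a_{k,i},b_{k,i}$ are nonzero, so $1/G_k$ has a simple pole at $z=0$.

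For the first identity, I would invoke Proposition \ref{prop holo1} to conclude that $\eta$ is holomorphic in $z$ near $z=0$ and in $r$ near $r=0$; by the constraint that $\eta$ be holomorphic at the punctures (imposed in Section 5 and verified at $r=0$ by the explicit formula for $\eta_k$ together with $\sum\gamma_{k,i}=\sum\gamma_{k-1,i}=1$), $\eta$ has no pole at $z=0$. Hence $G_k\eta$ is a holomorphic $1$-form in a punctured neighborhood of $z=0$ that extends holomorphically across $z=0$ with a zero there. Cauchy's theorem then gives
\[
\int_{\partial D_\epsilon(0_k)}G^{(-1)^k}\eta=\sqrt r\int_{\partial D_\epsilon(0_k)}G_k\,\eta=0,
\]
which is exact, not merely modulo $r\cdot\mathrm{holo}(r,X)$.

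For the second identity, $\eta/G_k$ has a simple pole at $z=0$, so by the residue theorem
\[
\int_{\partial D_\epsilon(0_k)}G^{(-1)^{k+1}}\eta=\frac{1}{\sqrt r}\int_{\partial D_\epsilon(0_k)}\frac{\eta}{G_k}=\frac{2\pi i}{\sqrt r}\,\mathrm{res}_{0}\!\left(\frac{\eta}{G_k}\right).
\]
Using Proposition \ref{prop holo1}, write the Taylor expansion $\eta=\eta_k+r\,\tilde\eta(z,r)$ where $\tilde\eta$ is a holomorphic $1$-form in $z$ near $0$, holomorphic in $(r,X)$ near $(0,X_0)$. Since $1/G_k$ has a simple pole at $z=0$, taking the residue is a linear functional continuous in the Taylor coefficients of $\eta$ at $z=0$, so
\[
\mathrm{res}_{0}\!\left(\frac{\eta}{G_k}\right)=\mathrm{res}_{0}\!\left(\frac{\eta_k}{G_k}\right)+r\cdot\mathrm{holo}(r,X),
\]
which yields the stated formula.

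The only step that requires care is the holomorphy of the remainder in $(r,X)$: one must justify that the residue operator commutes with the expansion in $r$. This is standard because on the fixed small circle $\partial D_\epsilon(0_k)$ the integrand $\tilde\eta/G_k$ is jointly holomorphic in $(z,r,X)$, so integration term by term produces a holomorphic function of $(r,X)$. I do not expect any genuine obstacle beyond this bookkeeping; the proof is essentially the same local residue argument used in Proposition \ref{prop expansion1}, adapted to a puncture where $G$ itself (rather than only $\eta$) carries a zero or pole.
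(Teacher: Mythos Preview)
Your proof is correct and follows essentially the same route as the paper's own proof: reduce to integrals of $\sqrt{r}\,G_k\eta$ and $\frac{1}{\sqrt{r}}\,\eta/G_k$ on $\partial D_\epsilon(0_k)\subset\bar\C_k$, observe that $G_k\eta$ is holomorphic near $0$ so the first integral vanishes, and for the second use Proposition~\ref{prop holo1} to write $\eta=\eta_k+r\cdot\mathrm{holo}(r,X)\,dz$ and apply the residue theorem. Your write-up is a bit more careful than the paper's (you spell out why $G^{(-1)^k}=\sqrt{r}\,G_k$ in both parities and why the remainder is holomorphic in $(r,X)$), but there is no substantive difference in the argument.
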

\begin{proof}
First,
\[
\int_{\partial D_\epsilon(0_k)}G^{(-1)^k}\eta=\sqrt{r}\int_{\partial D_\epsilon(0_k)}G_k\eta=0
\]
because $G_k\eta$ has no poles in a neighborhood of $0_k$.  Using proposition \ref{prop holo1},
\[
\begin{split}
\int_{\partial D_\epsilon(0_k)}G^{(-1)^{k+1}}\eta=&\frac{1}{\sqrt{r}}\int_{\partial D_\epsilon(0_k)}\frac{1}{G_k}\left(\eta_k+r\text{ holo}(r,X)dz\right)\\
=&\frac{1}{\sqrt{r}}\left(2\pi i\text{ res}_{0_k}\frac{1}{G_k}\eta_k+r\text{ holo}(r,X)\right)
\end{split}
\]
\end{proof}

\section{Solving the period problem}
We can attempt to solve the constraints on the Weierstrass data and the period problem by adjusting the variables $(r,X)$, and we will express this with a map $\mathcal{F}$.  In fact, we will find solutions when $r=0$.  This allows us to take advantage of the asymptotic expansion of each of the periods at $r=0$.  

Let $\zeta_{k,i}$ be the zeroes of $\frac{1}{\delta_k z}G_kdz$ in $\overline{\C}_k,\; i=1,\cdots,n_k+n_{k-1}-2$.  Define
\[ \mathcal{F}_{1,k,i}=\eta\left(\zeta_{k,i}\right)\text{.}\]
Abbreviate $\mathcal{F}_{1,i}=\left(\mathcal{F}_{1,k,1},\cdots,\mathcal{F}_{1,k,n_k+n_{k-1}-2}\right)$ and $\mathcal{ F}_1=\left(\mathcal{F}_{1,1},\cdots,\mathcal{F}_{1,N}\right)$.  The zeroes of $\frac{1}{\delta_k z}G_kdz$ can be thought of as the zeroes of a polynomial, and for now let's assume that they are all simple zeroes.  Section 9 in \cite{tr2} takes care of the case where $\frac{1}{\delta_kz}G_kdz$ may not only have simple zeroes, and applies here as well.  As argued in \cite{tr2}, the simple zeroes of a polynomial depend analytically on its coefficients and, by proposition \ref{prop holo1}, $\mathcal{F}_1$ depends analytically on $\left(r,X\right)$.  

If $\mathcal{F}_1=0$ then $\eta$ has at least a simple zero at each zero of $G_k$.  All the zeroes of $\frac{1}{\delta_k z}G_kdz$ are assumed to be simple, and so $G$ has $$\sum_{k=1}^N(n_k+n_{k-1}-2)=2\sum_{k=1}^Nn_k-2N$$ zeroes and poles, counting multiplicity. 

 The number of zeroes of $\eta$ is $$2\text{ genus}(\Sigma)-2=2\left(1+\sum_{k=1}^N(n_k-1)\right)-2=2+2\sum_{k=1}^Nn_k-2N-2=2\sum_{k=1}^Nn_k-2N.$$  Thus, the zeroes of $\eta$ are precisely the $\zeta_{k,i}$.

The remaining components of the map $\mathcal{F}$ deal with the period problem.  The period condition $\text{Re}\int_{A_{k,i}}\eta=0$ is taken care of by letting $\gamma_{k,i}\in\mathbb{R}$.  This is simply due to how we defined $\eta$.  From this moment on, assume that $\gamma_{k,i}\in\mathbb{R}$.   Recall that
\[
\text{Re}\int\phi_1+i\text{Re}\int\phi_2=\frac{1}{2}\left(\overline{\int G^{-1}\eta}-\int G\eta\right).
\]
With this equivalency in mind, we define
\[
\begin{split}
\mathcal{F}_{2,k,i} &=\frac{1}{\log{r}}\text{Re}\int_{B_{k,i}}\eta,\;\; i=2,\cdots,n_k. \\
\mathcal{F}_{3,k,i} &=\sqrt{r}\overline{\left(\overline{\int_{B_{k,i}}G^{-1}\eta}-\int_{B_{k,i}}G\eta\right)},\;\; i=2,\cdots,n_k. \\
\mathcal{F}_{4,k,i} &=\frac{(-1)^k}{\sqrt{r}}\left(\overline{\int_{A_{k,i}}G^{-1}\eta}-\int_{A_{k,i}}G\eta\right),\;\; i=1,\cdots,n_k. \\
\mathcal{F}_{5,k} &=(-1)^{k+1}\sqrt{r}\text{ \conj}^k\left(\overline{\int_{\partial D_{\epsilon}(0_k)}G^{-1}\eta}-\int_{\partial D_{\epsilon}(0_k)}G\eta\right)\\
\end{split}
\]

Here, $\conj$ denotes the conjugation in $\C$. 

Define the vectors $\mathcal{ F}_2$, $\mathcal{F}_3$, and $\mathcal{F}_4$ as we defined $\mathcal{F}_1$.  Let $\mathcal{F}_5=\left(\mathcal{F}_{5,1},\mathcal{F}_{5,2},...,\mathcal{F}_{5,N}\right)$ and $\mathcal{F}=\left(\mathcal{F}_1,\mathcal{F}_2,\mathcal{F}_3,\mathcal{F}_4,\mathcal{F}_5\right)$.  Note that the constraints of the Weierstrass data and the period problem listed in section 3.2 are equivalent to $\mathcal{F}=0$.  Also, there is no need for $\mathcal{F}_5$ in \cite{tr2}.

The $\log{r}$ terms that show up in $\mathcal{F}$ require us to express the variable $r$ in terms of the variable $t$ using the equation $r(t)=e^{-1/t^2}$ if $t\in\mathbb{R}\setminus\{0\}$ and $r(0)=0$.  Otherwise, the map $\mathcal{F}$ won't be differentiable at $r=0$.  Propositions \ref{prop expansion1} and \ref{prop expansion2} imply that $\mathcal{F}$ is differentiable at $r=0$.

The next proposition is essentially the same as proposition 9 in \cite{tr2}.  The key difference is in the definition of $a_{k,i}$ and $b_{k,i}$.  This difference plays out in the rest of the calculations of this section, which lead to the proof of the proposition.
Recall also that the $p_{k,i}$ form a periodic set of points with $p_{k+N,i}= p_{k,i} e^T$. This introduces a similar, but more obfuscated periodicity of the $a_{k,i}$ and $b_{k,i}$ below.

\begin{proposition}
Let $\{p_{k,i}\}$ be a balanced configuration.  Define $X_o$ by:
\[\alpha_{k,i}=\gamma_{k,i}=\beta_{k+1,i}=1/n_k,\]
\[a_{k,i}=\left(\conj^{k}(p_{k,i})\right)^{(-1)^k},\]
\[b_{k,i}=\left(\conj^k(p_{k-1,i})\right)^{(-1)^k}.\]
Then $\mathcal{F}(0,X_o)=0$.  Also, if $X$ is a solution to $\mathcal{F}(0,X)=0$ then, up to some identifications, $X=X_o$ for some balanced configuration $\{p_{k,i}\}$.  In addition, if $\{p_{k,i}\}$ is a non-degenerate balanced configuration then, up to some identifications, $D_2\mathcal{F}(0,X_o)$ is an isomorphism.  By the implicit function theorem, for t in a neighborhood of 0, there exists a unique $X(t)$ in a neighborhood of $X_o$ such that $\mathcal{F}(t,X(t))=0$. 
\label{main prop}
\end{proposition}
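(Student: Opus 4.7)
The plan is to follow the structure of Proposition~9 in \cite{tr2}, with two adjustments that account for the annular (rather than planar) ends: the extra component $\mathcal{F}_5$ together with the new scaling parameters $\delta_k$, and the exponential periodicity $p_{k+N,i}=p_{k,i}e^T$. The latter is absorbed into the definition of $a_{k,i},b_{k,i}$ via the conjugation $\conj^k$, and this is precisely what makes the balance equations of Definition~1 the correct limit of the analytic period equations.

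To verify $\mathcal{F}(0,X_o)=0$, I would read off each component at $r=0$ from Propositions~\ref{prop expansion1} and \ref{prop expansion2}. The component $\mathcal{F}_1$ vanishes because, with $\alpha_{k,i}=\beta_{k,i}=1/n_k$ and the $a_{k,i},b_{k,i}$ built from a balanced configuration, one checks directly that $\eta_k$ vanishes at every zero of $G_k\,dz/z$ on $\overline{\C}_k$. The component $\mathcal{F}_2$ reduces to $\gamma_{k,i}=\gamma_{k,1}$, which holds by the choice $\gamma_{k,i}=1/n_k$. The components $\mathcal{F}_3$ and $\mathcal{F}_4$ reduce to residue identities between $G_k\eta_k$ at $a_{k,i}$ and $G_{k+1}\eta_{k+1}$ at $b_{k+1,i}$ (and their $1/G$ counterparts); after computing these residues explicitly as rational expressions in the $p_{k,i}$, they collapse term-by-term into the force equations $F_{k,i}=0$, with the $\conj^k$ conventions accounting for the $(-1)^k$ exponents in the force formula. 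Finally, $\mathcal{F}_5$ is new: it encodes the period around the annular end $0_k$. Since $\eta_k$ is holomorphic at $0$ and $G_k$ has a zero there, $\mathcal{F}_5=0$ reduces to a rational identity in the residue of $(1/G_k)\eta_k$ at $0$, which can be verified by direct substitution of $X_o$.

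For the converse rigidity statement, each step is reversible: $\mathcal{F}_2=0$ together with $\sum_i\gamma_{k,i}=1$ and $\gamma_{k,i}\in\R$ forces $\gamma_{k,i}=1/n_k$; parallel arguments using $\mathcal{F}_1$ and $\mathcal{F}_3$ pin down $\alpha_{k,i}$ and $\beta_{k+1,i}$; and the remaining residue identities from $\mathcal{F}_4$ become the balance equations for the points $p_{k,i}$ recovered by inverting the defining relations for $a_{k,i},b_{k,i}$. The ``up to identifications'' language absorbs the one-complex-dimensional scaling action $p\mapsto\lambda p$ noted after Definition~3, together with the obvious relabeling freedoms on each $\overline{\C}_k$.

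The non-degeneracy of $D_2\mathcal{F}(0,X_o)$ is the main obstacle. I plan to show that the differential is block triangular with respect to the splitting $X=(\gamma,\alpha,\beta,\delta,a,b)$. The $\gamma$-block and the $(\alpha,\beta)$-block are invertible by the same computation as in \cite{tr2}. The $\delta$-block is new and comes from $\mathcal{F}_5$: since $\delta_k$ enters linearly as an overall scale in $G_k$, a short calculation shows that $\partial\mathcal{F}_{5,k}/\partial\delta_j$ is diagonal with nonzero entries, so this block is invertible by inspection. The $(a,b)$-block, after translating from the $(a,b)$ coordinates back to the $p$ coordinates via the conjugation conventions in $X_o$, coincides with the differential of the map $p\mapsto F$ from Definition~3; it therefore has the maximal rank guaranteed by the non-degeneracy hypothesis, and becomes an isomorphism on the slice transverse to the scaling action. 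With $D_2\mathcal{F}(0,X_o)$ an isomorphism, the implicit function theorem applied to $(t,X)\mapsto\mathcal{F}(t,X)$ — differentiable at $t=0$ thanks to the substitution $r(t)=e^{-1/t^2}$ together with Propositions~\ref{prop expansion1} and \ref{prop expansion2} — yields the unique solution $X(t)$. The most delicate bookkeeping lies in the $(a,b)$-block: tracking the $(-1)^k$ conjugations and lining up the residue calculations term by term with the force formula is where I expect the argument to demand the most care.
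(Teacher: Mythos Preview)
Your overall architecture mirrors the paper's, but there is a genuine gap in how you handle $\mathcal{F}_3$ and the $(a,b)$-variables, and it breaks the invertibility argument.

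First, $\mathcal{F}_3$ is not a residue condition. At $r=0$ the $B_{k,i}$-periods in Proposition~\ref{prop expansion1} become path integrals $\int_{a_{k,1}}^{a_{k,i}} G_k^{-1}\eta_k$ and $\int_{b_{k+1,1}}^{b_{k+1,i}} G_{k+1}^{-1}\eta_{k+1}$, and since $G_k^{-1}\eta_k=\delta_k^{-1}\,dz/z$ these are \emph{logarithms}, not residues. At $X_o$ one finds $\mathcal{F}_{3,k,i}=\log(q_{k+1,i}/q_{k+1,1})$ in suitable coordinates, which vanishes because of the conjugation relations defining $a_{k,i},b_{k,i}$ from $p_{k,i}$ --- not because of the force equations. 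Only $\mathcal{F}_4$ collapses to the forces $F_{k,i}$.

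Second, and this is the real problem, the $(a,b)$-block cannot ``coincide with the differential of $p\mapsto F$'': the vectors $a$ and $b$ together carry $2m$ complex parameters while $p$ carries only $m$. The non-degeneracy hypothesis gives you rank $2(m-1)$ real, i.e.\ $m-1$ complex directions, which is half of what you need. The paper resolves this by an explicit change of coordinates $(a,b)\leftrightarrow(T,p,q)$, with $q\in\C^m$ absorbing the extra degrees of freedom; then $\mathcal{F}_3$ acts as an isomorphism on the $q$-slice (modulo the identification $q_{k,i}\sim\lambda_k q_{k,i}$), while $\mathcal{F}_4$ acts on the $p$-slice via the force differential (modulo $p\sim\lambda p$), and the block-triangular matrix has five diagonal blocks $\mathcal{I}_1,\ldots,\mathcal{I}_5$ corresponding to $(\alpha,\beta),\gamma,q,p,\delta$. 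Without separating $q$ from $p$ you cannot exhibit the triangular structure, and your $(a,b)$-block is simply not square of the right size. You need to introduce the $q$-variables and show $\mathcal{F}_3$ controls them before the non-degeneracy of $F$ can finish the job on $p$.
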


The Weierstrass data given by each unique $X(t)$ is the map of an immersed doubly periodic minimal surface with embedded planar ends.  The rest of this section contains the proof of Proposition \ref{main prop}.

\subsection{Solving the equation $\mathcal{F}_1=0$}
Assume $r=0$.  $\mathcal{F}_{1,k}=0$ is equivalent to: $G_kdz$ and $\eta_k$ have the same zeroes on ${\bar C}_k$.  Since they already have the same poles they are proportional.  By normalization, $\eta_k=\frac{1}{\delta_k z}G_k dz$.  Thus, $\mathcal{F}_1=0$ is equivalent to $\alpha_{k,i}=\gamma_{k,i}$ and $\beta_{k,i}=\gamma_{k-1,i}$.

From this moment on, assume that $\mathcal{F}_1=0$ so that $r=0\Rightarrow\eta_k=\frac{1}{\delta_k z}G_kdz$.
\subsection{Solving the equation $\mathcal{F}_2=0$}
Using proposition \ref{prop expansion1},
\[
\begin{split}
\mathcal{F}_{2,k,i} &=\frac{1}{\text{log}(r)}\text{Re}\int_{B_{k,i}}\eta \\
&=\frac{1}{\text{log}(r)}\text{Re}\left(\left(\gamma_{k,i}-\gamma_{k,1}\right)\text{log}(r)+\text{hol}(r,X)\right) \\
&=\gamma_{k,i}-\gamma_{k,1}+\frac{\text{Re}(\text{hol}(r,X))}{\text{log}(r)} 
\end{split}
\]
When $r=0$, $\mathcal{F}_{2,k,i}=\gamma_{k,i}-\gamma_{k,1}$.  Thus, $\mathcal{F}_2=0 \Rightarrow \gamma_{k,i}=\gamma_{k,1} \forall i \Rightarrow \gamma_{k,i}=\frac{1}{n_k} \forall i$.

\subsection{Solving the equation $\mathcal{F}_3=0$}
Using proposition \ref{prop expansion1},
\[
\begin{split}
\mathcal{F}_{3,k,i} =& \sqrt{r}\overline{\left(\overline{\int_{B_{k,i}}G^{-1}\eta}-\int_{B_{k,i}}G\eta\right)} \\
=& (-1)^k\text{\conj}^{k}\left(\int_{a_{k,1}}^{a_{k,1}}G_k^{-1}\eta_k+r\text{ log}(r)\text{hol}(r,X)+r\text{ hol}(r,X)\right) \\
&+(-1)^k\text{\conj}^{k+1}\left(\int_{b_{k+1,1}}^{b_{k+1,i}}G_{k+1}^{-1}\eta_{k+1}+r\text{ log}(r)\text{hol}(r,X)+r\text{ hol}(r,X)\right) 
\end{split}
\]
When r=0,
\[
\begin{split}
\mathcal{F}_{3,k,i} &=(-1)^k\text{\conj}^{k}\left(\int_{a_{k,1}}^{a_{k,i}}G_k^{-1}\eta_k\right)+(-1)^k\text{\conj}^{k+1}\left(\int_{b_{k+1}}^{b_{k+1,i}}G_{k+1}^{-1}\eta_{k+1}\right) \\
&=(-1)^k\text{\conj}^{k}\left(\int_{a_{k,1}}^{a_{k,i}}\frac{1}{\delta_k z}dz\right)+(-1)^k\text{\conj}^{k+1}\left(\int_{b_{k+1,1}}^{b_{k+1,i}}\frac{1}{\delta_{k+1}z}dz\right) \\
&=(-1)^k\text{\conj}^{k}\left(\delta_k^{-1}\log\left(\frac{a_{k,i}}{a_{k,1}}\right)\right)+(-1)^k\text{\conj}^{k+1}\left(\delta_{k+1}^{-1}\log\left(\frac{b_{k+1,i}}{b_{k+1,1}}\right)\right)
\end{split}
\]
Thus, $\mathcal{F}_3=0$ and $\delta_k=1$ for $k=1,2,\ldots,N \Rightarrow \frac{b_{k+1,i}}{b_{k+1,1}}=\text{\conj}\left(\frac{a_{k,1}}{a_{k,i}}\right)$.

\subsection{Solving the equation $\mathcal{F}_4=0$}
Using proposition \ref{prop expansion1},
\[
\begin{split}
\mathcal{F}_{4,k,i} =&\frac{(-1)^k}{\sqrt{r}}\left(\overline{\int_{A_{k,i}}G^{-1}\eta}-\int_{A_{k,i}}G\eta\right)\\\\
=&\frac{1}{\sqrt{r}}\left[\text{\conj}^{k+1}\int_{A_{k,i}}G^{(-1)^{k+1}}\eta-\text{\conj}^k\int_{A_{k,i}}G^{(-1)^k}\eta\right]\\\\
=&\frac{1}{\sqrt{r}}\text{\conj}^{k+1}\left[\sqrt{r}\left(-2\pi i \text{res}_{b_{k+1,i}}G_{k+1}\eta_{k+1}+r\text{ holo}(r,X)\right)\right]\\
&-\frac{1}{\sqrt{r}}\text{\conj}^k\left[\sqrt{r}\left(2\pi i\text{ res}_{a_{k,i}}G_k\eta_k+r\text{ holo}(r,X)\right)\right]\\\\
=&\text{\conj}^{k+1}\left[-2\pi i\text{ res}_{b_{k+1,i}}G_{k+1}\eta_{k+1}+r\text{ holo}(r,X)\right]\\
&-\text{\conj}^k\left[2\pi i\text{ res}_{a_{k,i}}G_k\eta_k+r\text{ holo}(r,X)\right].\\
\end{split}
\]
Thus, when $r=0$,
\[
\begin{split}
\mathcal{F}_{4,k,i} =&\text{\conj}^{k+1}\left[-2\pi i\text{ res}_{b_{k+1,i}}G_{k+1}\eta_{k+1}\right]-\text{\conj}^k\left[2\pi i\text{ res}_{a_{k,i}}G_k\eta_k\right]\\
=&2\pi i(-1)^{k}\left[-\text{ \conj}^{k}\left(\text{res}_{a_{k,i}}G_{k}\eta_{k}\right)+\text{ \conj}^{k+1}\left(\text{res}_{b_{k+1,i}}G_{k+1}\eta_{k+1}\right)\right]\\
=&-4\pi \delta_k i(-1)^k\text{\conj}^k\left(\sum_{j=1,\neq i}^{n_k}\frac{a_{k,i}}{n_k^2\left(a_{k,i}-a_{k,j}\right)}-\sum_{j=1}^{n_{k-1}}\frac{a_{k,i}}{n_k n_{k-1}\left(a_{k,i}-b_{k,j}\right)}\right) \\
&+4\pi\delta_{k+1} i(-1)^k\text{\conj}^{k+1}\left(-\sum_{j=1}^{n_{k+1}}\frac{b_{k+1,i}}{n_k n_{k+1}\left(b_{k+1,i}-a_{k+1,j}\right)}+\sum_{j=1,\neq i}^{n_k}\frac{b_{k+1,i}}{n_k^2\left(b_{k+1,i}-b_{k+1,j}\right)}\right). \\
\end{split}
\]
We will deal with this equation further in section 7.6 below.

\subsection{Solving the Equation $\mathcal{F}_5=0$}
Using proposition \ref{prop expansion2},
{\footnotesize
\[
\begin{split}
\overline{\int_{\partial D_{\epsilon}(0_k)}\frac{1}{G}\eta}-\int_{\partial D_{\epsilon}(0_k)}G\eta =&\left((-1)^k\text{\conj}^{k+1}\int_{\partial D_{\epsilon}(0_k)}G^{(-1)^k}\eta+(-1)^{k+1}\text{\conj}^k\int_{\partial D_{\epsilon}(0_k)}G^{(-1)^{k+1}}\eta\right)\\
=&\frac{1}{\sqrt{r}}(-1)^{k+1}\text{\conj}^k\left(2\pi i \text{ res}_{0}\frac{1}{G_k}\eta_{k}+\text{$r$ hol}(r,X)\right). \\
\end{split}
\]}
Thus,
\[
\begin{split}
\mathcal{F}_{5,k} =& 2\pi i \text{ res}_{0}\frac{1}{G_k}\eta_{k}+\text{$r$ hol}(r,X)\\
=&2\pi\delta_k^{-1} i+\text{$r$ hol}(r,X).\\
\end{split}
\]
When $r=0$, 
\[
\mathcal{F}_{5,k}=2\pi\delta_k^{-1} i,\;k=1,...,N.
\]
We want this period to be $2 \pi i$.  Thus, we need $\delta_k=1$ for $k=1,2,\ldots,N$.

\subsection{Uncovering the force equations and the non-horizontal period $\mathcal{T}_t$}

Our force equations could just be given by $\mathcal{F}_{4,k,i}$ for $k=1,\ldots,N$ and $i=1,\ldots,n_k$.  However, the non-horizontal period $\mathcal{T}_t$ whose limit is $\mathcal{T}_0=\overline{T}$ doesn't have a clear relationship to the points $(a,b)$.  Therefore, as done in \cite{tr2}, we will construct an isomorphism $(a,b)\mapsto(T,p,q)$.   

Let $m=n_1+\cdots+n_N$.  Given $p_{k,i}\in \C$, $k=1,\cdots,N$, $i=1,\cdots,n_k$, let $p\in\C^m$ be the vector whose components are $p_{k,i}$.  Given $(T,p,q)\in \C \times \C^m \times \C^m$, define $(a,b)$ by
\[
a_{k,i}=\left(\text{\conj}^{k}p_{k,i}q_{k,1}\right)^{(-1)^k} 
\]
\[
b_{k,i}=\left(\text{\conj}^{k}p_{k-1,i}q_{k,i}\right)^{(-1)^k}
\]
where $p_{k+N,i}=p_{k,i}e^T$ and $q_{k+N,i}=q_{k,i}$.\\

Note that the way $(a,b)$ are defined is similar to how they were defined in proposition \ref{main prop}.  We get the $(a,b)$ in proposition \ref{main prop} if we let $q_{k,i}=1$ for $k=1,\ldots,N$ and $i=1,\ldots,n_k$.  Also, our $(a,b)$ is a multiplicative version of the $(a,b)$ in \cite{tr2}.

If $\delta_k=1$ for $k=1,\ldots,N$ then 
\[
\begin{split}
\mathcal{F}_{3,k,i}&=(-1)^k\text{\conj}^{k}\left(\log\left(\frac{a_{k,i}}{a_{k,1}}\right)\right)+(-1)^k\text{\conj}^{k+1}\left(\log\left(\frac{b_{k+1,i}}{b_{k_1,1}}\right)\right)\\
&=\log{\left(\frac{p_{k,i}}{p_{k,1}}\right)}-\log{\left(\frac{p_{k,i}q_{k+1,i}}{p_{k,1}q_{k+1,1}}\right)}\\
&=\log{q_{k+1,i}}-\log{q_{k+1,1}}.
\end{split}
\]  

If $\mathcal{F}_{3,k,i}=0$ then  $\log{q_{k+1,i}}=\log{q_{k+1,1}}$.  Hence, $q_{k,i}=q_{k,1}$ for $k=1,\cdots,N$ and $i=1,\cdots,n_k$.  Thus, let $q_k=q_{k,1}$.\\ 

We finally deal with $\mathcal{F}_{4,k,i}$.  Assume $\mathcal{F}_2=0,\mathcal{F}_3=0$, and $\delta_k=1, k=1,2,\ldots N$.  Then, 
{\footnotesize
\[
\begin{split}
\frac{\mathcal{F}_{4,k,i}}{-4\pi i}=&(-1)^k\text{\conj}^k\left(\sum_{j=1,\neq i}^{n_k}\frac{a_{k,i}}{n_k^2\left(a_{k,i}-a_{k,j}\right)}-\sum_{j=1}^{n_{k-1}}\frac{a_{k,i}}{n_k n_{k-1}\left(a_{k,i}-b_{k,j}\right)}\right) \\
&-(-1)^k\text{\conj}^{k+1}\left(-\sum_{j=1}^{n_{k+1}}\frac{b_{k+1,i}}{n_k n_{k+1}\left(b_{k+1,i}-a_{k+1,j}\right)}+\sum_{j=1,\neq i}^{n_k}\frac{b_{k+1,i}}{n_k^2\left(b_{k+1,i}-b_{k+1,j}\right)}\right) \\
=&(-1)^k\sum_{j \neq i}^{n_k}\frac{(p_{k,i}q_k)^{(-1)^k}}{n_k^2\left((p_{k,i}q_k)^{(-1)^k}-(p_{k,j}q_k)^{(-1)^k}\right)}\\
&-(-1)^k\sum_{j=1}^{n_{k-1}}\frac{(p_{k,i}q_k)^{(-1)^k}}{n_k n_{k-1}\left((p_{k,i}q_k)^{(-1)^k}-(p_{k-1,j}q_k)^{(-1)^k}\right)} \\
&+(-1)^{k}\sum_{j=1}^{n_{k+1}}\frac{(p_{k,i}q_{k+1})^{(-1)^{k+1}}}{n_k n_{k+1}\left((p_{k,i}q_{k+1})^{(-1)^{k+1}}-(p_{k+1,j}q_{k+1})^{(-1)^{k+1}}\right)}\\
&+(-1)^{k+1}\sum_{j \neq i}^{n_k}\frac{(p_{k,i}q_{k+1})^{(-1)^{k+1}}}{n_{k}^2\left((p_{k,i}q_{k+1})^{(-1)^{k+1}}-(p_{k,j}q_{k+1})^{(-1)^{k+1}}\right)} \\
=&\sum_{j \neq i}\frac{p_{k,i}+p_{k,j}}{n_k^2(p_{k,i}-p_{k,j})}+(-1)^k\left(\sum_{j=1}^{n_{k+1}}\frac{p_{k+1,j}^{(-1)^k}}{n_kn_{k+1}\left(p_{k+1,j}^{(-1)^k}-p_{k,i}^{(-1)^k}\right)}-\sum_{j=1}^{n_{k-1}}\frac{p_{k,i}^{(-1)^k}}{n_kn_{k-1}\left(p_{k,i}^{(-1)^k}-p_{k-1,j}^{(-1)^k}\right)}\right).
\end{split}
\]}

Thus, assuming $\mathcal{F}_2=0$ and $\mathcal{F}_3=0$, we get $\mathcal{F}_{4,k,i}=-4\pi i(-1)^k F_{k,i}$.  Now, if $\{p_{k,i}\}$ is a balanced configuration then define $X_o$ as in the statement of Proposition \ref{main prop}.  Because of $q_{k,i}=1$, we get $\mathcal{F}(0,X_o)=0$, proving the first statement of Proposition \ref{main prop}.

In order to prove the converse, it is necessary to make some identifications since $\mathcal{F}_3=0$ only implies that $q_{k,i}=q_{k,1}$.  We need $q_{k,1}=1$.  Our identifications are multiplicative versions of the similar identifications in section $6.5$ of \cite{tr2}.  Given complex numbers $\lambda_k$, let $a_{k,i}'=a_{k,i}\lambda_k$ and $b_{k,i}'=b_{k,i}\lambda_k$.  Then $\mathcal{ F}_3'=\mathcal{F}_3$ and $\mathcal{F}_4'=\mathcal{F}_4$.  Let $\left(\Sigma',G',\eta'\right)$ be the Weierstrass data corresponding to $a_{k,i}'$ and $b_{k,i}'$.  Then the map $\phi:\Sigma \rightarrow \Sigma',z \in \overline{\C}_k \mapsto z \lambda_k$ is an isomorphism with $\phi^*G'dz=Gdz$ and $\phi^*\eta'=\eta$.  Thus, the Weierstrass data $\left(\Sigma,G,\eta\right)$ and $\left(\Sigma',G',\eta'\right)$ are isomorphic and define equivalent minimal surfaces.  Hence, the above identification makes sense:
\[(a,b) \sim (a',b') \Longleftrightarrow \forall k \; \exists \lambda_k \text{ such that } \forall i,\; a_{k,i}'=a_{k,i}\lambda_k,\; b_{k,i}'=b_{k,i}\lambda_k.\]
We can create similar identifications for $p$ and $q$:
\[p' \sim p \Longleftrightarrow \exists \lambda \text{ such that } \forall k,i, \; p_{k,i}'=p_{k,i}\lambda\]
\[ q' \sim q \Longleftrightarrow \forall k \; \exists \lambda_k \text{ such that } \forall i,\; q_{k,i}'=q_{k,i}\lambda_k.\]

As simple computations yields 
\begin{lemma}
The map $(T,p,q) \mapsto (a,b)$ is an isomorphism. $\Box$
\end{lemma}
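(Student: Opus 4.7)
The plan is to exhibit an explicit inverse modulo the identifications, after checking that the map descends to the quotient spaces and that both quotients have the same dimension.

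A dimension count comes first. The source $(T,p,q) \in \C \times \C^m \times \C^m$ has complex dimension $2m+1$, and the identification group (one $\lambda$-scaling on $p$ together with $N$ independent $\mu_k$-scalings on the $q$-levels) has dimension $N+1$ and acts freely, so the source quotient has dimension $2m-N$. The target $(a,b) \in \C^{2m}$ modulo the $N$ level-scalings likewise has dimension $2m-N$. Equivariance is a direct computation: replacing $p$ by $\lambda p$ scales every $a_{k,i}$ and $b_{k,i}$ by the common level-$k$ factor $(\conj^k\lambda)^{(-1)^k}$, while replacing $q_{k,i}$ by $\mu_k q_{k,i}$ scales the level-$k$ data by $(\conj^k\mu_k)^{(-1)^k}$. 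Thus the map intertwines the source and target identifications via a surjective group homomorphism $\C^* \times (\C^*)^N \to (\C^*)^N$ whose kernel is the one-dimensional diagonal $\{\mu_k=1/\lambda\}$, consistent with the dimension match.

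To invert, set $A_{k,i} := \conj^k(a_{k,i}^{(-1)^k})$ and $B_{k,i} := \conj^k(b_{k,i}^{(-1)^k})$, so that the defining formulas read $A_{k,i} = p_{k,i}q_{k,1}$ and $B_{k,i} = p_{k-1,i}q_{k,i}$, remembering the periodicity $p_{0,i} = p_{N,i}e^{-T}$. Use the target $\lambda_k$-scalings to normalize $A_{k,1}=1$ for each $k$, and the source $\mu_k$-scalings to normalize $q_{k,1}=1$ for each $k$; together these force $p_{k,1}=A_{k,1}=1$, so no further scaling by $\lambda$ is free. One then reads off directly $p_{k,i}=A_{k,i}$; for $k\geq 2$, $q_{k,i}=B_{k,i}/p_{k-1,i}$; and at $k=1$ the equation $B_{1,i}=p_{N,i}e^{-T}q_{1,i}$ together with $q_{1,1}=1$ determines $e^T=p_{N,1}/B_{1,1}$ and then $q_{1,i}$ for every $i$. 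A direct substitution into the defining formulas confirms that this produces a preimage of the given $(a,b)$.

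The only real obstacle is bookkeeping: tracking how the alternating exponents $(-1)^k$ and the repeated conjugations $\conj^k$ interact with the identification scalings, and verifying that the chosen normalizations on the two sides are compatible---which they are, because the source scaling group surjects onto the target scaling group with exactly a one-dimensional kernel. Once normalizations are fixed, uniqueness of $(T,p,q)$ is immediate from the closed-form expressions above, giving injectivity; the explicit construction gives surjectivity. Combined with the matching dimensions, this establishes that the induced map between quotients is an isomorphism.
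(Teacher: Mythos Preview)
Your argument is correct. The paper itself gives no proof beyond the phrase ``simple computations yield'' and a $\Box$, so your explicit dimension count, equivariance check, and closed-form inverse constitute exactly the kind of routine verification the authors are omitting; there is nothing to compare against.
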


Using the identifications on $(a,b)$, $p$, and $q$, we get that $\mathcal{F}_3=0 \Rightarrow q_{k,1}  \sim 1$.  This  proves the second part of Proposition \ref{main prop}.

\subsection{$D_2\mathcal{F}(0,X_0)$ is an isomorphism}
The next three lemma's are from \cite{tr2}.  Lemmas \ref{lemma2} and \ref{lemma3} are the same as propositions $10$ and $11$ in \cite{tr2}.  Our lemma \ref{lemma4} is partly proven in section 6.5 of \cite{tr2}.  
\begin{lemma}[\cite{tr2}]
Let $E=\{(\alpha_k',\beta_k')\in\mathbb{C}^{n_k+n_{k-1}} | \sum\alpha_{k,i}'=\sum\beta_{k,i}'=0\}$.  The partial differential of $\mathcal{F}_{1,k}$ with respect to $(\alpha_k,\beta_k)$ is an isomorphism from $E$ onto $\mathbb{C}^{n_k+n_{k-1}-2}$.
\label{lemma2}
\end{lemma}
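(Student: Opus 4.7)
The plan is to compute $D\mathcal{F}_{1,k}$ at $(0,X_0)$ explicitly and then show injectivity; since both source and target have complex dimension $n_k+n_{k-1}-2$, this is enough. Set $f_k(z)=G_k(z)/(\delta_k z)$ so that $G_k\,dz/z = f_k(z)\,dz$, and let $g_k(z)$ denote the coefficient of $dz$ in $\eta_k$. At $X_0$ we have $\alpha_{k,i}=\gamma_{k,i}$ and $\beta_{k,i}=\gamma_{k-1,i}$, so $f_k\equiv g_k$, and in particular $\zeta_{k,j}$ (the zeros of $f_k$ in $\mathbb{C}^*$) are also simple zeros of $g_k$ with $f_k'(\zeta_{k,j})=g_k'(\zeta_{k,j})\neq 0$.

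Next I differentiate along a variation $(\alpha',\beta')\in E$. Since $\eta_k$ depends only on $\gamma$ (not on $\alpha,\beta$), the quantity $g_k$ is unchanged, while $f_k$ is perturbed by
\[
R(z)=\sum_{i=1}^{n_k}\frac{\alpha'_{k,i}}{z-a_{k,i}}-\sum_{i=1}^{n_{k-1}}\frac{\beta'_{k,i}}{z-b_{k,i}}.
\]
Implicit differentiation of $f_k(\zeta_{k,j})=0$ gives $\dot\zeta_{k,j}=-R(\zeta_{k,j})/f_k'(\zeta_{k,j})$, and then
\[
\dot{\mathcal{F}}_{1,k,j}=g_k'(\zeta_{k,j})\,\dot\zeta_{k,j}=-R(\zeta_{k,j}),
\]
since $g_k'(\zeta_{k,j})=f_k'(\zeta_{k,j})$ at $X_0$. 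So I must show: the only $(\alpha',\beta')\in E$ with $R(\zeta_{k,j})=0$ for all $j=1,\dots,n_k+n_{k-1}-2$ is $(\alpha',\beta')=0$.

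Here is the key step. Clearing denominators, $R(z)=P(z)/Q(z)$ with $Q(z)=\prod_i(z-a_{k,i})\prod_i(z-b_{k,i})$ of degree $n_k+n_{k-1}$. The two linear constraints $\sum_i\alpha'_{k,i}=0$ and $\sum_i\beta'_{k,i}=0$ defining $E$ force $R(z)=O(1/z^2)$ at infinity, so $\deg P\le n_k+n_{k-1}-2$. Exactly the same analysis applied to $f_k$ (using the normalizations $\sum\alpha_{k,i}=\sum\beta_{k,i}=1$ that are built into $G_k$) shows $f_k=\tilde P/Q$ with $\deg\tilde P\le n_k+n_{k-1}-2$ and $\tilde P$ is a nonzero scalar multiple of $\prod_j(z-\zeta_{k,j})$. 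If $R$ also vanishes at all $\zeta_{k,j}$, then $P$ is divisible by $\prod_j(z-\zeta_{k,j})$; by degree count, $P=\lambda\prod_j(z-\zeta_{k,j})$ for some $\lambda\in\mathbb{C}$, and hence $R=\mu f_k$ for some $\mu\in\mathbb{C}$. Comparing residues at $a_{k,i}$ gives $\alpha'_{k,i}=\mu\alpha_{k,i}/\delta_k$, so summing and using $\sum\alpha'_{k,i}=0$ forces $\mu=0$, and therefore $R\equiv 0$, i.e.\ $(\alpha',\beta')=0$.

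Since both $E$ and $\mathbb{C}^{n_k+n_{k-1}-2}$ have the same complex dimension, the injectivity just established upgrades to an isomorphism, proving the lemma. The only delicate point is the bookkeeping of poles and the vanishing orders at $0$ and $\infty$ that control $\deg P$; once one sees that $R$ and $f_k$ lie in the same $(n_k+n_{k-1}-1)$-dimensional rational-function space and share $n_k+n_{k-1}-2$ zeros, proportionality is forced and the linear constraints in $E$ kill the proportionality constant.
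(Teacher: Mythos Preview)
Your argument is correct and is essentially the standard proof (the paper itself gives no argument here, only the citation to \cite{tr2}, and what you wrote is the natural computation behind that reference). The key identities are exactly the right ones: at $(0,X_0)$ one has $f_k=g_k$, so the chain rule collapses the variation of $\mathcal{F}_{1,k,j}=g_k(\zeta_{k,j})$ to $-R(\zeta_{k,j})$, and the partial-fraction/degree count forces $R$ to be proportional to $f_k$, after which the constraint $\sum\alpha'_{k,i}=0$ kills the constant.

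Two tiny cosmetic points. First, in the residue comparison you wrote $\alpha'_{k,i}=\mu\,\alpha_{k,i}/\delta_k$; with your normalization $f_k=G_k/(\delta_k z)$ the residue of $f_k$ at $a_{k,i}$ is already $\alpha_{k,i}$, so the factor $1/\delta_k$ should not be there (this does not affect the conclusion). Second, the phrase ``zeros of $f_k$ in $\mathbb{C}^*$'' is slightly imprecise: the $\zeta_{k,j}$ are the zeros of the $1$-form $f_k\,dz$ on $\overline{\mathbb{C}}_k$, and your degree count (that $\deg\tilde P=n_k+n_{k-1}-2$ exactly, under the standing assumption of simple zeros) is what guarantees they all lie in $\mathbb{C}$ and that $\tilde P$ is a nonzero multiple of $\prod_j(z-\zeta_{k,j})$.
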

\begin{proof}
See proposition $10$ in section $6.2$ of \cite{tr2}. 
\end{proof}

\begin{lemma}[\cite{tr2}]
$$\sum_{k=1}^N\sum_{i=1}^{n_k}\mathcal{F}_{4,k,i}(t,X)=0\;\forall(t,X).$$
\label{lemma3}
\end{lemma}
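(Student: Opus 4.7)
The plan is to apply the residue theorem on each sphere $\bar{\C}_k$ to the meromorphic $1$-forms $G\eta$ and $G^{-1}\eta$, and to collapse the resulting relations into a telescoping alternating sum. The whole argument hinges on one parity observation: because $G|_{\bar{\C}_k} = (\sqrt{r}\,G_k)^{(-1)^k}$ and $G_k$ carries a simple zero at each of the ends $0_k$ and $\infty_k$, the Gauss map $G$ itself has zeros at the ends of $\bar{\C}_k$ when $k$ is even (and poles there when $k$ is odd). Consequently, $G\eta$ is holomorphic on $\bar{\C}_k$ away from the neck regions when $k$ is even, and symmetrically $G^{-1}\eta$ is holomorphic away from the necks when $k$ is odd. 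For the opposite parities each form does have extra poles at the ends, or possibly at the other zeros $\zeta_{k,i}$ of $G_k$, but the argument below will not need to control these.

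Setting $\alpha_k=\sum_i\int_{A_{k,i}}G\eta$ and $\beta_k=\sum_i\int_{A_{k,i}}G^{-1}\eta$, I would apply Stokes' theorem on $\bar{\C}_k$ minus the neck disks: for $k$ even this yields $\alpha_k=\alpha_{k-1}$, and for $k$ odd it yields $\beta_k=\beta_{k-1}$. The essential orientation bookkeeping is that the positively oriented small circle around $b_{k,i}\in\bar{\C}_k$ represents the cycle $-A_{k-1,i}$ in $\Sigma$, because the neck identification $vw=r$ reverses orientation. Since $N$ is even, pairing consecutive terms produces $\sum_{k=1}^N(-1)^k\alpha_k = \sum_{j=1}^{N/2}(\alpha_{2j}-\alpha_{2j-1}) = 0$ directly. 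The analogous telescoping for $\beta$, using $\beta_{2j-1}=\beta_{2j-2}$, collapses $\sum_k(-1)^k\beta_k$ to $\beta_N-\beta_0$, which is zero by the cyclic identification of levels on the compact surface $\Sigma$ (equivalently, by applying the residue theorem to $G^{-1}\eta$ globally on $\Sigma$). Since $(-1)^k$ is real and conjugation is additive,
\[
\sum_{k,i}\mathcal{F}_{4,k,i} = \frac{1}{\sqrt{r}}\left(\overline{\sum_k(-1)^k\beta_k}-\sum_k(-1)^k\alpha_k\right) = 0.
\]

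The step I expect to be trickiest is the orientation bookkeeping at the necks: I must verify carefully that the positively oriented small circle around $b_{k,i}\in\bar{\C}_k$ really represents $-A_{k-1,i}$ (not $+A_{k-1,i}$) as a cycle in $\Sigma$, since a sign error here would destroy the telescoping entirely. The parity analysis of the Gauss map at the ends is routine once the exponent $(-1)^k$ is unpacked, but it has to be coordinated with the telescoping so that each of the alternating sums for $\alpha$ and for $\beta$ collapses to zero on its own.
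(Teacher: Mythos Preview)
Your argument is correct: the parity observation that $G\eta$ (respectively $G^{-1}\eta$) is holomorphic on $\bar\C_k$ away from the neck annuli when $k$ is even (respectively odd), together with the orientation identity that the positively oriented circle $|w_{k,i}|=\epsilon$ represents $-A_{k-1,i}$, gives exactly the relations $\alpha_{2j}=\alpha_{2j-1}$ and $\beta_{2j-1}=\beta_{2j-2}$, and the two alternating sums then telescope to zero as you describe. The paper itself gives no independent proof, deferring entirely to Proposition~11 of \cite{tr2}; your argument is the natural Stokes/residue computation one expects there.

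One minor quibble: your parenthetical alternative ``applying the residue theorem to $G^{-1}\eta$ globally on $\Sigma$'' does not by itself yield $\beta_N-\beta_0=0$, since for even $k$ the form $G^{-1}\eta$ acquires additional poles at $0_k$, $\infty_k$ (and possibly at the $\zeta_{k,i}$) whose residues would have to be accounted for. But your primary justification via the cyclic identification $A_{0,i}=A_{N,i}$ already settles this, so the proof stands.
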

\begin{proof}
See proposition $11$ in section $6.5$ of \cite{tr2}.
\end{proof}

\begin{lemma}[\cite{tr2}]
The partial differential of $\mathcal{F}$ evaluated at $(0,X_0)$ with respect to the variables $(\alpha,\beta),\gamma,q,p,\delta$ has the form
\[
\begin{bmatrix}\mathcal{ I}_1 & \cdot & 0 & 0 & 0  \\ 0 & \mathcal{I}_2 & 0 & 0 & 0 \\ \cdot & \cdot &\mathcal{I}_3 & 0 & \cdot \\ \cdot & \cdot & \cdot &\mathcal{I}_4 & \cdot \\ \cdot & \cdot & 0 & 0 & \mathcal{I}_5 \end{bmatrix}
\]
with $\mathcal{I}_k$ an invertible linear operator for $k=1,2,3,4,5$, and so it is invertible.
\label{lemma4}
\end{lemma}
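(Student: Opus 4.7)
The plan is to exploit the block structure: after the simultaneous row-permutation $(\mathcal F_2,\mathcal F_1,\mathcal F_5,\mathcal F_3,\mathcal F_4)$ and column-permutation $(\gamma,(\alpha,\beta),\delta,q,p)$, the displayed matrix becomes block lower-triangular with diagonal blocks $\mathcal I_2,\mathcal I_1,\mathcal I_5,\mathcal I_3,\mathcal I_4$ in that order. It therefore suffices to check (i) that each $\mathcal I_k$ is an isomorphism and (ii) that the indicated zero off-diagonal blocks really are zero at $(0,X_0)$.

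For the diagonal blocks, $\mathcal I_1$ is handled by Lemma \ref{lemma2}. Using the computations already carried out in sections 7.2 and 7.6, one has $\mathcal F_{2,k,i}|_{r=0}=\gamma_{k,i}-\gamma_{k,1}$ and $\mathcal F_{3,k,i}|_{r=0}=\log q_{k+1,1}-\log q_{k+1,i}$ (the latter under $\mathcal F_1=0$ and $\delta=1$); on the tangent spaces where $\sum_i\gamma_{k,i}$ is held constant and $q$ is taken modulo the identification $q_{k,\cdot}\sim\lambda_k$, these are manifestly linear isomorphisms. For $\mathcal I_5$, section 7.5 gives $\mathcal F_{5,k}|_{r=0}=2\pi\delta_k^{-1}i$, whose derivative in $\delta$ is diagonal with nonzero entries $-2\pi\delta_k^{-2}i$. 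The critical block is $\mathcal I_4$: section 7.6 shows $\mathcal F_{4,k,i}=-4\pi i(-1)^k F_{k,i}$ whenever $\mathcal F_1=\mathcal F_2=0$ and $\delta=1$, so $\mathcal I_4$ is a nonzero scalar multiple of the differential $DF$ of the force. The non-degeneracy hypothesis gives that $DF$ has real rank $2(m-1)$; its kernel is precisely the infinitesimal $\C^*$-orbit $p\mapsto\lambda p$, which is absorbed by the identification $p\sim\lambda p$ of section 7.6, and by Lemma \ref{lemma3} its image lies in the matching codimension-$2$ subspace $\{\sum F_{k,i}=0\}$. Hence $\mathcal I_4$ descends to an isomorphism of quotient spaces of the same real dimension.

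The off-diagonal zeros all rest on the single observation that at $X_0$ the relations $\alpha_{k,i}=\gamma_{k,i}$, $\beta_{k,i}=\gamma_{k-1,i}$, $\delta_k=1$ yield $G_k\,dz/(\delta_k z)=\eta_k$ on each $\overline{\C}_k$, and this identity is preserved under any infinitesimal change of $a,b$ through $(p,q)$ or of $\delta$, because both sides depend on $a,b$ in the same residue-theoretic fashion and $\eta_k$ is independent of $\delta$. Consequently the zeros $\zeta_{k,i}$ remain zeros of $\eta$ after such perturbations, giving $\partial\mathcal F_1/\partial q=\partial\mathcal F_1/\partial p=\partial\mathcal F_1/\partial\delta=0$. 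The vanishing of $\partial\mathcal F_2/\partial(\alpha,\beta)$, $\partial\mathcal F_2/\partial q$, $\partial\mathcal F_2/\partial p$, and $\partial\mathcal F_2/\partial\delta$ is immediate from $\mathcal F_{2,k,i}|_{r=0}=\gamma_{k,i}-\gamma_{k,1}$; the vanishing $\partial\mathcal F_3/\partial p=0$ is exactly the cancellation of section 7.6 where the $p$-dependence drops out of $\log(a_{k,i}/a_{k,1})$ and $\log(b_{k+1,i}/b_{k+1,1})$; and $\partial\mathcal F_5/\partial q=\partial\mathcal F_5/\partial p=0$ follows from $\mathrm{res}_{0}\bigl(\eta_k/G_k\bigr)=1/\delta_k$ at $X_0$, a value unchanged under any perturbation keeping the two rational pieces $h_1\equiv h_2$. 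The main obstacle will be the bookkeeping for $\mathcal I_4$: I need to verify that our multiplicative periodicity $p_{k+N,i}=p_{k,i}e^T$, in place of Traizet's additive $p_{k+N,i}=p_{k,i}+T$, still makes the $\C^*$-orbit of scalings exactly the kernel of $DF$, so that the non-degeneracy hypothesis yields the correct rank count on the quotient. Once this is in place, the remaining pieces transcribe directly from section 6 of \cite{tr2}.
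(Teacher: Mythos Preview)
Your proposal is correct and follows essentially the same approach as the paper's proof: both verify the diagonal blocks $\mathcal{I}_1,\ldots,\mathcal{I}_5$ are invertible using Lemma~\ref{lemma2}, the explicit formulas from sections 7.2--7.6, Lemma~\ref{lemma3}, the non-degeneracy hypothesis, and the identifications on $p$ and $q$; and both obtain the off-diagonal zeros from the fact that, along the locus $\alpha_k=\gamma_k$, $\beta_k=\gamma_{k-1}$, one has $\eta_k=G_k\,dz/(\delta_k z)$ identically in $a,b,\delta$. Your explicit row/column permutation making the matrix block lower-triangular is a useful clarification that the paper leaves implicit in its ``and so it is invertible,'' and your concern about the kernel of $DF$ under the multiplicative periodicity is well-placed but resolves exactly as you indicate: $F$ is invariant under $p\mapsto\lambda p$, so the real $2$-dimensional $\C^*$-orbit lies in the kernel, and the rank-$2(m-1)$ hypothesis forces equality.
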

\begin{proof}
The arguments explaining the first four entries of the top four rows are explained in section $6.5$ of \cite{tr2}.  We repeat those arguments.  The key difference is that there is no fifth row or column in \cite{tr2}.  

In the first row, $\mathcal{I}_1$ is invertible by lemma \ref{lemma2}.  If $\alpha_k=\gamma_k$ and $\beta_k=\gamma_{k-1}$ then $\eta_k=\frac{1}{\delta_k z}G_kdz$, and so $\mathcal{F}_1=0$ independent of $q,p$, and $\delta$.  Hence, there are zeroes in the last three entries of the first row.

The second row is clear because $\mathcal{F}_{2,k,i}=\gamma_{k,i}-\gamma_{k,1}$ when $r=0$ and is independent of $\alpha ,\beta ,q,p$, and $\delta$.

The identification on $q$ makes $\mathcal{I}_3$ invertible.  The zero in the third row is because $\mathcal{F}_3$ is independent of $p$.

By lemma \ref{lemma3}, we can think of $\mathcal{ F}_4$ as a map into the subspace $\sum\mathcal{F}_{4,k,i}=0$.  Also, $$\mathcal{I}_4=4\pi i(-1)^{k+1}\frac{\partial}{\partial p}F.$$
Thus, the non-degeneracy of the force equations implies that $\mathcal{I}_4$ is onto.  The identification on $p$ implies that $\mathcal{I}_4$ is invertible.

When $r=0,\alpha_k=\gamma_k$, and $\beta_k=\gamma_{k-1}$, we get $\mathcal{F}_{5,k}=\frac{2\pi i}{\delta_k}$.  Thus, $\mathcal{I}_5$ is invertible.  The zeroes in row five are due to the fact that $\mathcal{F}_5$ is independent of $p$ and $q$ when $r=0,\alpha_k=\gamma_k$, and $\beta_k=\gamma_{k-1}$.
\end{proof}

Finally, we have shown that $D_2 F(0,X_0)$ is an isomorphism, completing the proof of proposition \ref{main prop}.  There are the two free parameters $t\in\mathbb{R}$ and $T\in\mathbb{C}$.  Thus, the implicit function theorems provides a three-dimensional space of solutions to the equation $\mathcal{F}(t,X)=0$.  As discussed in \cite{hatr1}, this is the expected size of our space of minimal surfaces. Note that in  \cite{tr2}, the surfaces are made up of domains $\C_k$. 
The balance configurations can be changed by complex linear transformation that do not affect the minimal surface. In our case, the domains are 
 punctured planes $\C_k^*$, and the balance configurations can only be changed by complex multiplications. This explains the difference in the dimensions of the moduli spaces.

\section{Embeddedness and properties \ref{property1} and \ref{property2}}
We can use the technique in \cite{tr2} to prove that our surfaces are embedded.  The only variation is that our surfaces have pairs of ends at each level.  However, it turns out this is a minor difference when it comes to proving embeddedness.  In the process of proving embeddedness, we also show that the surfaces satisfy properties \ref{property1} and \ref{property2}.

Let $\left(\Sigma,G,\eta\right)$ be the Weierstrass data given by proposition \ref{main prop} for some small positive $t$.  In this section, it is convenient to express $\psi$ as
\[
\psi(z)=\left(\text{horiz}(z),\text{height}(z)\right)\in \mathbb{C}\times\mathbb{R}.
\]

The following proposition is essentially the same as proposition $12$ in section $7$ of \cite{tr2}.  Parts $4,5$, and $6$ have slight differences.  We include a calculation of the location of the ends at each level.  
 
\begin{proposition}
There exists a constant $C$, not depending on $t$, such that:
\begin{enumerate}
\item
For any point $z\in {\bar \C}_k$ such that $\forall i$, $\abs{v_{k,i}}>\epsilon$, $\abs{w_{k,i}}>\epsilon$,
\[
\abs{\text{height}(z)-\text{height}(\infty_k)}\leq C.
\]
\item
For any point $z\in {\bar C}_k$ such that $\frac{r}{\epsilon}<\abs{v_{k,i}}<\epsilon$,
\[
\abs{\text{height}(z)-\text{height}(\infty_k)-\frac{1}{n_k}\log{\abs{v_{k,i}(z)}}}\leq C.
\]
\item
\[
\abs{\text{height}(\infty_{k+1})-\text{height}(\infty_k)-\frac{1}{n_k}\log{r}}\leq C.
\]
\item
Choose $P_{k,i}\in\Sigma$ such that $v_{k,i}(P_{k,i})=\sqrt{r}$.  Note that $G(P_{k,i})=1$.  Then
\[
2\sqrt{r}\left(\text{horiz}(P_{k,j})-\text{horiz}(P_{k,i})\right)\rightarrow(-1)^k\text{ \conj}^{k+1}(a_{k,j}-a_{k,i})=\log{\overline{p_{k,j}}}-\log{\overline{p_{k,i}}}
\]
and
\[
2\sqrt{r}\left(\text{horiz}(P_{k,j})-\text{horiz}(P_{k-1,i})\right)\rightarrow(-1)^k\text{ \conj}^{k+1}(a_{k,j}-b_{k,i})=\log{\overline{p_{k,j}}}-\log{\overline{p_{k-1,i}}}.
\]
Thus, we can translate the surface such that $2\sqrt{r}\text{ horiz}(P_{k,i})\rightarrow \log{\overline{p_{k,i}}}\;\forall k,i$.
\item
Let $0<\sigma<\frac{1}{2}$.  The image of the domain $r^{1-\sigma}<\abs{v_{k,i}}<r^\sigma$ converges to a catenoid with necksize $\frac{2\pi}{n_k}$, and it is contained in a vertical cylinder with radius $\frac{5r^{\sigma-1/2}}{n_k}$.
\item
The non-horizontal period of $\psi$ is
\[
\mathcal{T}=\text{Re}\int_{B_{1,1}}\phi\simeq \left(\frac{\overline{T}}{2\sqrt{r}},\left(\sum_{k=1}^N\frac{1}{n_k}\right)\log{r}\right).
\]
\item
For each $k=1,\ldots,N$,
\[
2\sqrt{r}\text{Re}(\text{horiz}(0_k))\rightarrow (-1)^{k+1}\infty
\]
and
\[
2\sqrt{r}\text{Re}(\text{horiz}(\infty_k))\rightarrow (-1)^k\infty.
\]
\end{enumerate}
\label{embed prop1}
\end{proposition}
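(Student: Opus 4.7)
The plan is to follow Traizet's proof of Proposition 12 in \cite{tr2} closely, adapting it for our normalization $\eta_k=(1/\delta_kz)G_k\,dz$, the exponential quotient $\exp\colon\C\to\C^*$, and the paired annular ends $0_k,\infty_k$ at each level. The preparation is to decompose $\Sigma$ into $N$ bulk pieces (each $\bar\C_k$ with small neighborhoods $|v_{k,i}|\le\epsilon$, $|w_{k,i}|\le\epsilon$ excised) and the finitely many neck regions $\{r/\epsilon<|v_{k,i}|<\epsilon\}$. Proposition \ref{prop holo1} then supplies a uniform bound on $\eta$ and $G^{\pm1}\eta$ on the bulk, while Proposition \ref{prop holo2} gives the normal form $\eta=f(v,r/v)\,dv/v$ with $f(0,0)=\gamma_{k,i}=1/n_k$ on the necks. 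Parts (1)--(3) follow by integrating $\eta$ along appropriate paths: on a bulk piece the height variation is bounded by a constant depending only on $X_0$; on a neck the integrand is $(1+O(r))\,dv/(n_kv)$, producing the $\frac{1}{n_k}\log|v|$ behavior in (2); concatenating bulk--neck--bulk from $\infty_k$ to $\infty_{k+1}$ gives the $\frac{1}{n_k}\log r$ separation in (3).

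Part (4) exploits the identity $2\,\text{horiz}=\overline{\int G^{-1}\eta}-\int G\eta$. Because $\delta_k=1$ at $X_0$, whichever of $G^{\pm1}\eta$ is the dominant $O(1/\sqrt r)$ term reduces on $\bar\C_k$ to $\pm dz/(\sqrt r\,z)$, whose integral from $a_{k,i}$ to $a_{k,j}$ is $\pm\log(a_{k,j}/a_{k,i})/\sqrt r$; the subdominant $O(\sqrt r)$ contribution vanishes in the limit after multiplication by $2\sqrt r$. Substituting $a_{k,i}=(\conj^k(p_{k,i}))^{(-1)^k}$ and applying the operator $(-1)^k\conj^{k+1}$ then converts $\log(a_{k,j}/a_{k,i})$ into $\log\bar p_{k,j}-\log\bar p_{k,i}$ uniformly in the parity of $k$. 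The $P_{k,j}$ versus $P_{k-1,i}$ case is identical after crossing the neck via the identification $vw=r$, and a suitable global translation normalizes $2\sqrt r\,\text{horiz}(P_{k,i})\to\log\bar p_{k,i}$ for all $k,i$.

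Part (5) is a direct catenoid calculation: on the wide annulus $r^{1-\sigma}<|v|<r^\sigma$, substitute $G\sim\sqrt r/v$ (or $v/\sqrt r$) and $\eta\sim dv/(n_kv)$ into the Weierstrass integrand, rescale horizontal lengths by $2\sqrt r$, and recognize the limit as the standard catenoid with necksize $2\pi/n_k$; the radial bound $5r^{\sigma-1/2}/n_k$ follows from $|G^{\pm1}\eta|\le C/(\sqrt r\,|v|)$ integrated on $|v|\le r^\sigma$. Part (6) is obtained by homotoping $B_{1,1}$ to traverse one neck at each level: by (3) the vertical contribution is $\sum_k\frac{1}{n_k}\log r$, while the horizontal contributions from the bulk pieces telescope via (4) into $\bar T/(2\sqrt r)$ using the $T$-periodicity $p_{k+N,i}=p_{k,i}e^T$. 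For (7), the computation in Proposition \ref{prop expansion2} shows that $G^{(-1)^{k+1}}\eta$ has a simple pole at $0_k$ with residue $\pm 1/(\sqrt r\,\delta_k)$, so $\text{Re}\int\phi$ along any path approaching $0_k$ diverges like $\pm\log|z|/\sqrt r$; an analogous computation at $\infty_k$ using $G_k(z)\sim C/z$ produces the opposite sign, yielding the two stated limits.

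The main obstacle will be part (5): one has to verify that the normal-form expansions of $\eta$ and $G$ are uniform over the wide annulus $r^{1-\sigma}<|v|<r^\sigma$ rather than only on a fixed $v$-compact annulus, and that after rescaling the image converges in $C^1$ on compact subsets to a round catenoid of the stated necksize. A secondary, essentially bookkeeping, obstacle is part (4), where the alternating $\conj^k$ and $(-1)^k$ must be carried through carefully so that the conversion from $\log(a_{k,j}/a_{k,i})$ to $\log\bar p_{k,j}-\log\bar p_{k,i}$ comes out correctly for both parities of $k$.
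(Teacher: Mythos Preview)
Your proposal is correct and follows essentially the same approach as the paper. In fact, the paper's own proof is considerably more terse than your sketch: it simply cites the techniques of Proposition~8 in section~5 of \cite{tr2} for parts (1)--(6) and only writes out the computation for part~(7), obtaining $2\sqrt{r}\,\mathrm{Re}(\mathrm{horiz}(z))\to(-1)^k(\log|z|-\log|z_k|)$ by directly integrating $G^{(-1)^{k+1}}\eta\simeq dz/(\sqrt{r}\,\delta_k z)$ along a path in $\bar\C_k$, which is the same calculation you describe via the simple pole at $0_k$.
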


\begin{proof}
The proof of this proposition uses the same techniques used in the proof of proposition $8$ in section $5$ of \cite{tr2}.  We show the details of the proof of part $7$.  

Let $z_k\in{\bar \C}_k$ be the point such that $v_{k,1}(z_k)=\epsilon$ for $k=1,\cdots,N$, and let $z_1$ be the base point for $\psi$.  Suppose $z\in\overline{\mathbb{C}}_k$.  Since $\text{Re}(\text{horiz}(z))=\text{Re}\int_{z_1}^{z_k}\phi_1+\text{Re}\int_{z_k}^z\phi_1$ and $\text{Re}\int_{z_1}^{z_k}\phi_1$ is bounded, we only need to consider $\text{Re}\int_{z_k}^z\phi_1$.  In that case,
\[
\begin{split}
2\sqrt{r}\text{Re}(\text{horiz}(z)) &=2\sqrt{r}\text{Re}\int_{z_k}^z\frac{1}{2}\left(\frac{1}{G}-G\right)\eta\\
&=\text{Re}\sqrt{r}(-1)^k\int_{z_k}^z\left(\frac{1}{\sqrt{r}G_k}-\sqrt{r}G_k\right)\eta\\
&=\text{Re}\sqrt{r}(-1)^k\int_{z_k}^z\frac{1}{\sqrt{r}G_k}\left(\eta_k+r\text{ holo}(r,X)dz\right)+\text{Re}\sqrt{r}(-1)^{k+1}\int_{z_k}^z\sqrt{r}G_k\eta\\
&=\text{Re}\left((-1)^k\int_{z_k}^z\frac{1}{\delta_k z}dz+r\text{ holo}(r,X)\right)\\
&=(-1)^k\left(\log{\abs{z}}-\log{\abs{z_k}}\right)+\text{Re}\left(r\text{ holo}(r,X)\right)\\
&\rightarrow(-1)^k\left(\log{\abs{z}}-\log{\abs{z_k}}\right).
\end{split}
\] 
Thus,
\[
2\sqrt{r}\text{Re}(\text{horiz}(\infty_k))\rightarrow(-1)^k\infty
\]
and
\[
2\sqrt{r}\text{Re}(\text{horiz}(0_k))\rightarrow(-1)^{k+1}\infty.
\]
\end{proof}

\begin{proposition}[\cite{tr2}]
For small $t>0$, the minimal surface given by proposition $\ref{main prop}$ is embedded.
\label{embed prop2}
\end{proposition}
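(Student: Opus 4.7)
The plan is to follow Traizet's argument in section 7 of \cite{tr2}, treating Proposition \ref{embed prop1} as the main asymptotic input. First I would decompose each fundamental domain $\tilde M_t=M_t/\Lambda$ into the pieces of Property \ref{property1}: for each $k=1,\dots,N$, the annular region $E_{k,t}\subset \overline{\C}_k$ on which $|v_{k,i}|\ge r^\sigma$ and $|w_{k,i}|\ge r^\sigma$ for every $i$, and for each node $p_{k,i}$ the catenoidal neck $C_{k,i,t}$ on which $r^{1-\sigma}<|v_{k,i}|<r^\sigma$, with $0<\sigma<\tfrac12$ chosen as in part 5 of Proposition \ref{embed prop1}. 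The goal is to show that each piece is embedded, that distinct pieces inside $\tilde M_t$ are disjoint, and that their $\Lambda$-translates in $\R^3$ are also disjoint.

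Each neck $C_{k,i,t}$ is, by part 5 of Proposition \ref{embed prop1}, a $C^1$-small perturbation of an embedded catenoid of necksize $2\pi/n_k$ sitting inside a thin vertical cylinder, and hence embedded for $t$ small. To see that each $E_{k,t}$ is an embedded horizontal graph over the corresponding domain in $\C_k^*$, I would argue that on this region $G=(\sqrt r\, G_k)^{(-1)^k}$ has modulus either $O(r^{1/2-\sigma})$ or $O(r^{\sigma-1/2})$, so the tangent plane is $C^0$-close to horizontal and the vertical projection to the plane at height $\text{height}(\infty_k)$ is a local diffeomorphism. A monodromy argument on the resulting cover, combined with the monotonicity of $\text{height}$ along rays going out towards the ends (part 7) and into the necks (part 2), upgrades this to a global graph, which simultaneously verifies Property \ref{property1}.

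Non-intersection inside $\tilde M_t$ is controlled by the vertical separation $|\text{height}(\infty_{k+1})-\text{height}(\infty_k)|\sim \tfrac{1}{n_k}|\log r|\to \infty$ from part 3, and by the fact that distinct necks at level $k$ are centered at the widely separated horizontal points $\log \overline{p_{k,i}}/(2\sqrt r)$ from part 4. Disjointness of $\Lambda$-translates in $\R^3$ follows from part 6: the two horizontal periods $(0,2\pi,0)$ and $\overline T/(2\sqrt r)$ are linearly independent for small $t$, while the vertical component $(\sum 1/n_k)\log r$ of the non-horizontal period diverges, so translates cannot meet. Property \ref{property2} falls out of parts 2--5 simultaneously. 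The main obstacle will be the transition annulus $r^\sigma<|v_{k,i}|<\epsilon$ where $|G|$ is neither uniformly small nor uniformly large, so the naive projection argument breaks down; there I would replace it by Traizet's direct integral estimates for $\text{horiz}$ on annular paths to glue the catenoidal neck and the annular graph into a single embedded piece, after which embeddedness and both properties propagate to all of $M_t$ by periodicity.
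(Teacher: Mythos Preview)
Your proposal is correct and follows essentially the same slab-decomposition argument from \cite{tr2} that the paper invokes. The paper's own proof is much terser: it simply cites Traizet and isolates the single new issue, namely that each graphical region $E_{k,t}$ now carries \emph{two} ends $0_k$ and $\infty_k$ rather than one, and disposes of it by part~7 of Proposition~\ref{embed prop1}, which shows these ends go to opposite horizontal infinities and hence cannot interfere. You do invoke part~7, but note that it concerns $\mathrm{Re}(\mathrm{horiz})$, not $\mathrm{height}$ (which is bounded on $E_{k,t}$ by part~1); with that correction your graph argument for $E_{k,t}$ goes through and matches the paper.
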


The same proposition is proven in section $7$ of \cite{tr2}.  The only difference in the proof is due to the fact that we have two ends at each level instead of one.  In \cite{tr2}, Traizet splits $\mathbb{R}^3$ into the horizontal slabs
\[
\text{height}(\infty_{k+1})+\frac{\sigma}{n_{k+1}}\left|\log{r}\right|\leq x_3 \leq \text{height}(\infty_k)-\frac{\sigma}{n_k}\left|\log{r}\right|.
\]
and
\[
\text{height}(\infty_k)-\frac{\sigma}{n_k}\abs{\log{r}}\leq x_3 \leq \text{height}(\infty_k)+\frac{\sigma}{n_k}\abs{\log{r}}.
\]
Traizet shows that the intersection of the first slab with $\psi(\Sigma)$ is the $n_k$ disjoint components $C_{k,i,t}$, each one converging to a catenoid.  Therefore, this portion of the surface is embedded.  

Then, he shows that the intersection of the second slab with $\psi(\Sigma)$ is the region $E_{k,t}$, which is a graph over the plane and hence embedded.  The difference here is that we have two embedded ends in $E_{k,it}$.  However, by part $7$ of proposition \ref{embed prop1}, $\text{horiz}(0_k)=(-1)^{k+1}\infty$ and $\text{horiz}(\infty_k)=(-1)^k\infty$.  Hence, the ends in each level are disjoint.  Thus, we get that $E_{k,t}$ is embedded.  Therefore, $\psi(\Sigma)$ is embedded.  Proposition \ref{embed prop1} together with the proof of proposition \ref{embed prop2} show that our surfaces satisfy properties \ref{property1} and \ref{property2}.

\addcontentsline{toc}{section}{References}
\bibliographystyle{plain}
\bibliography{minlit}

\label{sec:liter}

\end{document}